\definecolor{darkblue}{rgb}{0,0,0.3}
\definecolor{urlblue}{rgb}{0,0,0.7}
\pgfplotsset{compat = newest}
\newtheorem{lemma}{Lemma}
\newtheorem{definition}{Definition}
\newtheorem{exercise}{Exercise}
\newtheorem{question}{Question}
\newtheorem{theorem}{Theorem}
\newtheorem{corollary}{Corollary}
\theoremstyle{definition}
\newtheorem{remark}{Remark}
\renewcommand{\leq}{\leqslant}
\renewcommand{\geq}{\geqslant}
\renewcommand{\d}{\ensuremath{\mathrm d}}
\DeclareMathOperator{\Ric}{Ric}
\newcommand{\D}{\nabla}
\newcommand{\p}{\partial}
\DeclareMathOperator{\diam}{diam}
\renewcommand{\tilde}{\widetilde}
\renewcommand{\bar}{\overline}
\newcommand{\preg}{\p^{\text{reg}}}
\newcommand{\psing}{\p^{\text{sing}}}
\renewcommand{\epsilon}{\varepsilon}
\newcommand{\TODO}[1]{}
\title[Isoperimetric problems]{Isoperimetric problems and lower bounds on curvature}
\author{Gioacchino Antonelli}
\address{Gioacchino Antonelli
\hfill\break Department of Mathematics, University of Notre Dame, Hurley Hall, 255 Hurley, Notre Dame, IN 46556, United States}
\email{gantonel@nd.edu}
\begin{document}

\maketitle
\begin{abstract}
    This note surveys some classical results and recent developments on the interplay between lower curvature bounds and the isoperimetric problem. It is based on mini-courses given at the \emph{European Doctorate School of Differential Geometry} (Granada, July 2024) and the summer school \emph{Optimal transport, heat flow and synthetic Ricci bounds} (Chicago, June 2025).
\end{abstract}

\vspace{12pt}

\tableofcontents

\section{Introduction}
This note is intended as a gentle introduction to some aspects of the \textbf{interplay between lower curvature bounds and isoperimetry}. Many arguments are only sketched, simplified, or treated in particular cases. This is therefore not a paper with complete proofs, but rather a motivating guide for further study. The reader is encouraged to attempt the \textbf{Exercises} collected at the end, and to reflect on the \textbf{Open Questions} posed throughout the lectures, some of which touch on open problems in the field. For an overview of the material, we refer to the table of contents above and the four lectures into which this note is organized.
\smallskip

For the ease of readability most of the results and properties will be stated in the setting of Riemannian manifolds or limits of Riemannian manifolds, but they hold in the framework of RCD spaces as well. We refer the reader to the discussion before and after each statement, and to the papers referenced below.

\subsection{Further readings}
Here I list some useful general references related to the topics of this note. The reader interested in the isoperimetric problem (and how it interplays with lower bounds on the curvature) is encouraged to take a look at these references and the  references therein.
\begin{enumerate}
\item \emph{The isoperimetric problem} \cite{RosIsoperimetric05}. Survey paper by A. Ros.
    \item \emph{Isoperimetric inequalities in Riemannian manifolds} \cite{Ritore2023}. Book by M. Ritoré.
    \item \emph{Isoperimetric inequalities in unbounded convex bodies} \cite{LeonardiRitore}. Research paper by G.P. Leonardi, M. Ritoré, E. Vernadakis.
    \item \emph{Isoperimetry on manifolds with Ricci bounded below: overview of recent results and methods} \cite{PozzettaSurvey}. Survey paper by M. Pozzetta concerning part of the results I will discuss in this note.
    \smallskip
    \item \emph{Lectures on Optimal Transport} \cite{AmbrosioBrueSemolaBook}. A book in Optimal Transport. 
    \item \emph{Calculus, heat flow and curvature-dimension bounds in metric measure spaces} \cite{AmbrosioSurvey}. A survey on the early developments of the CD/RCD theory. 
    \item \textit{De Giorgi and Gromov working together} \cite{DeGiorgiGromov}. A survey on the developments of the CD/RCD theory and applications.
\end{enumerate}

\section{First Lecture}

\subsection{Preliminaries}
A general reference for results about sets of finite perimeter is Maggi's book \cite{MaggiBook}. In this note $(M^n,g)$ will be a complete smooth Riemannian manifold without boundary (if not otherwise stated). The Riemannian distance associated to $g$ will be denoted $\mathrm{d}$. The open (resp., closed) ball  of center $p\in M$ and radius $r>0$ will be denoted $B_r(p)$ (resp., $\bar B_r(p)$).

Let $k\geq 0$, $\delta>0$, and $E\subset M$. We denote the Hausdorff measure
\[
\mathcal{H}^k(E):=\sup_{\delta>0}\mathcal{H}^k_\delta(E):=\sup_{\delta>0} \inf\left\{2^{-k}\omega_k\sum_{i=1}^\infty\diam(A_i)^k: E\subset\bigcup_{i=1}^\infty A_i, \diam(A_i)\leq \delta \right\},
\]
where $\omega_k:=\pi^{k/2}\Gamma((k+2)/2)^{-1}$. When $k\in\mathbb N$, $\omega_k$ is the volume of the unit ball in $\mathbb R^k$. $\mathcal{H}^k(\cdot)$ is a Borel regular outer measure on $M^n$. 

Let $E\subset M^n$ be a Borel set. The \emph{volume} of $E\subset M^n$ is $\mathcal{H}^n(E)$. It will also be denoted  with $|E|$. Let $\alpha=0$ (resp., $\alpha=1$), and let $E\subset M$ be a Borel set. We denote $E^{(\alpha)}$ the \emph{essential exterior} (resp., the \emph{essential interior}) of $E$:
\[
E^{(\alpha)}:=\left\{x\in M: \lim_{r\to 0^+}\frac{|E\cap B_r(x)|}{|B_r(x)|}=\alpha\right\}.
\]
The \emph{essential boundary} of $E$ is $\partial^*E:=M\setminus (E^{(0)}\cup E^{(1)})$. 

The \emph{perimeter of $E$ inside an open set $\Omega\subset M$} is
\begin{equation}\label{eqn:PerimeterOnManifold}
P(E,\Omega):=\sup\left\{\int_E \mathrm{div}(X)\mathrm{d}\mathcal{H}^n: X\in C^\infty_{c}(\Omega;TM), g(X,X)\leq 1 \right\}.
\end{equation}

The \emph{perimeter} of $E$ is $P(E):=P(E,M)$. We say that $E$ has \emph{finite perimeter} if $P(E)<\infty$. 

\subsubsection{Properties and comments}\label{subsectPropAndComm}
\begin{enumerate}
    \item (Density $1/2$ and essential boundary) (De Giorgi--Federer) When $E$ has finite perimeter, it can be shown that $P(E,\Omega)=\mathcal{H}^{n-1}(\partial^* E\cap\Omega)$ for every open set $\Omega$. Moreover, for $\mathcal{H}^{n-1}$-almost every $x\in \partial^* E$ we have:
\[
\lim_{r\to 0^+}\frac{|E\cap B_r(x)|}{|B_r(x)|}=\frac{1}{2}.
\]
In addition, at $\mathcal{H}^{n-1}$-almost every $x\in \partial^*E$ \underline{the} (unique) blow-up of $\partial^*E$ at $x$ is a codimension-one Euclidean hyperplane in $T_xM$: a generalized unit normal $\nu(x)$ is defined at $\mathcal{H}^{n-1}$-almost every point $x\in \partial^*E$.
    \item (A general notion of perimeter) The notion of perimeter can be defined in a metric measure space $(X,\mathrm{d},\mathfrak{m})$, where $(X,\mathrm{d})$ is a complete separable metric space, and $\mathfrak{m}$ is a Radon measure which is finite on balls. See Ambrosio \cite{AmbrosioAhlfors}, and Miranda \cite{Miranda03}. In such a setting, given $f\in L^1(X,\mathfrak{m})$, we say that $f$ is of \emph{bounded variation}, or $f\in \mathrm{BV}(X,\mathrm{d},\mathfrak{m})$, if 
    \[
    |Df|(X):=\inf\liminf_{i\to\infty}\left\{\int_X|\nabla f_i|\mathrm{d}\mathfrak{m}:f_i\in\mathrm{Lip}_{\mathrm{loc}}(X,\mathrm{d}),f_i\rightarrow f\,\text{in $L^1_{\mathrm{loc}}(X,\mathfrak{m})$}\right\}<+\infty.
    \]
    Here $|\nabla f|(x):=\limsup_{y\to x}\frac{|f(y)-f(x)|}{\mathrm{d}(x,y)}$. An $\mathfrak{m}$-finite set $E\subset M$ has \emph{finite perimeter} when $\chi_E\in \mathrm{BV}(X,\mathrm{d},\mathfrak{m})$. This notion is consistent with the one given above \eqref{eqn:PerimeterOnManifold}.

    The notions of set of finite perimeter and function of bounded variation have been recently thoroughly studied in abstract metric measure spaces. In particular, the basic structure results known in $\mathbb R^n$ and Riemannian manifolds that we are discussing here in \cref{subsectPropAndComm} have been extended, as of today, to the setting of $\mathrm{RCD}(K,N)$-spaces with $K\in\mathbb R$ and $N\in [1,\infty)$ (see \cref{BMandRCD}), see, e.g., \cite{AmborsioBrueSemola19, BruePasqualettoSemola, BPSGaussGreen, AntonelliBrenaPasqualetto}.
    \item\label{precompactness} (Precompactness and lower semicontinuity) If $(E_i)\subset M$ is a sequence of sets of finite perimeter with 
    \[
    \sup_{i\in \mathbb N} P(E_i)<\infty,
    \]
    then up to subsequences 
    \[
    \chi_{E_i}\longrightarrow \chi_E, \qquad \text{in $L^1_{\mathrm{loc}}$},
    \]
    and 
    \[
    P(E)\leq \liminf_{i\to\infty} P(E_i).
    \]
    \item\label{Approx} (Approximation) Let $f\in\mathrm{BV}(M,\mathrm{d},\mathcal{H}^n)$. Then there exists $f_i\in C^\infty_c(M)$ such that $f_i\to f$ in $L^1$ and $\int_M|\nabla f_i|\to |Df|(M)$.
    \item\label{Coarea} (Coarea formula) For every $f\in \mathrm{Lip}(M)$ we have
    \[
    \int_M|\nabla f|\mathrm{d}\mathcal{H}^n=\int_{\mathbb R} P(\{f\geq t\})\mathrm{d}t.
    \]
    A general version of the co-area formula valid in arbitrary metric measure spaces can be found in \cite[Section 4]{Miranda03}.
\end{enumerate}

\subsection{Isoperimetric problem}

Let $(M^n,g)$ be a Riemannian manifold. Let $I_M:(0,|M|)\to [0,+\infty]$ be the \emph{isoperimetric profile} defined as:
\[
I_M(v):=\inf\left\{P(E):|E|=v\right\}.
\] 
If a Borel set $E\subset M^n$ satisfies $I_M(|E|)=P(E)$, then $E$ is called an \emph{isoperimetric set}. For discussions about the regularity of the function $I$ see the end of Lecture 1 and Lecture 2. When there is no risk of confusion we will write $I$ instead of $I_M$.
\begin{lemma}\label{lem:Existence}
    Let $(M^n,g)$ be a compact Riemannian manifold. Then for every $v\in (0,|M|)$ there exists an isoperimetric set with volume $v$.
\end{lemma}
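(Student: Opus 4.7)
The plan is to apply the direct method of the calculus of variations, using the compactness and lower semicontinuity properties recorded in item \ref{precompactness} of \cref{subsectPropAndComm}.

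First, I would fix $v\in(0,|M|)$ and observe that $I(v)<+\infty$: for example, any smooth open set of volume $v$ (which exists since $M$ is connected and compact, so the volume function of smooth perturbations of a geodesic ball is continuous and attains every value in $(0,|M|)$) has finite perimeter. Then I would pick a minimizing sequence, i.e.\ a sequence $(E_i)_{i\in\mathbb N}$ of Borel sets of finite perimeter with $|E_i|=v$ and $P(E_i)\to I(v)$.

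Next, since $\sup_i P(E_i)<\infty$, the precompactness statement in item \ref{precompactness} provides a Borel set $E\subset M$ and a subsequence (not relabeled) such that $\chi_{E_i}\to \chi_E$ in $L^1_{\mathrm{loc}}(M)$. Because $M$ is compact, $L^1_{\mathrm{loc}}$ convergence coincides with $L^1$ convergence; in particular
\[
|E|=\int_M \chi_E\,\d\mathcal{H}^n=\lim_{i\to\infty}\int_M \chi_{E_i}\,\d\mathcal{H}^n=v,
\]
so $E$ is a competitor for the isoperimetric problem at volume $v$.

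Finally, the lower semicontinuity of perimeter from item \ref{precompactness} yields
\[
P(E)\leq \liminf_{i\to\infty} P(E_i)=I(v),
\]
while the reverse inequality $P(E)\geq I(v)$ holds by definition of $I(v)$ since $|E|=v$. Thus $P(E)=I(v)$ and $E$ is an isoperimetric set. The only step where something could conceivably go wrong is the identification $|E|=v$, i.e.\ the absence of volume loss along the minimizing sequence; this is the main difficulty in the non-compact setting (and a central theme of later lectures), but here it is automatic because $\mathcal{H}^n(M)<\infty$ makes the global $L^1$ convergence free.
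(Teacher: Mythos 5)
Your proof is correct and is essentially the same direct method the paper uses: take a minimizing sequence, apply the precompactness and lower semicontinuity from item~\ref{precompactness}, and note that compactness of $M$ upgrades $L^1_{\mathrm{loc}}$ to $L^1$ convergence so no volume is lost. The paper's one-line proof leaves these details implicit, but you have filled them in accurately.
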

\begin{proof}
    Take $(E_i)\subset M$ such that $|E_i|=v$ and $P(E_i)\to I(v)$ and apply \cref{precompactness} of the list of properties above.
\end{proof}
\begin{remark}[About existence in the non-compact case]
    \cref{lem:Existence} holds also when $M$ is possibly non-compact but has finite volume, due to Ritoré--Rosales \cite{RitRosales04}. When $M$ has infinite volume, one can come up with simple examples for which there are volumes $v$ such that no isoperimetric sets of volume $v$ exist (see \cref{ExNonExistence}). 
    Constructing examples without isoperimetric sets keeping the curvature bounded below is more challenging, see Lecture 4.
\end{remark}

{
The following result is due to Morgan, and leverages seminal contributions of Federer (\cite{FedererBullAMS}, \cite[Chapter 5]{Federer}), Almgren \cite{AlmgrenExistence}, and Bombieri \cite{BombieriRegularity}. For a modern treatment of the proof in the setting of volume-constrained perimeter minimizers in $\mathbb R^n$, which can be generalized to smooth Riemannian manifolds, the reader can consult \cite[Theorem 27.4 \& 28.1]{MaggiBook}.
\begin{theorem}[{\cite[Corollary 3.8]{morgan2003regularity}}]\label{thm:Regularity}
    Let $\tilde E\subset M^n$ be an isoperimetric set. Then there exists an open set $E\subset M^n$ with $|E\Delta\tilde E|=0$ such that $\partial E=\partial^* E$ and we can decompose:
    \[
    \partial E=\partial^{\mathrm{sing}}E\sqcup \partial^{\mathrm{reg}}E,
    \]
    where:
    \begin{enumerate}
        \item $\partial^{\mathrm{sing}}E$ is closed, and for every $d>n-8$\footnote{Actually: if $n\leq 7$, $\partial^{\mathrm{sing}}E=\emptyset$; and, if $n=8$, $\partial^{\mathrm{sing}}E$ has no accumulation points.}, $\mathcal{H}^d(\partial^{\mathrm{sing}}E)=0$;
        \item $\partial^{\mathrm{reg}}E$ is an open analytic hypersurface with constant mean curvature.
    \end{enumerate}
\end{theorem}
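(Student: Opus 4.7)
The plan is to reduce the isoperimetric problem, which is a constrained minimization, to an unconstrained local almost-minimization, and then invoke the classical De Giorgi--Federer--Almgren regularity theory for $\Lambda$-minimizers of the perimeter. First, I would pick a Borel representative: for any set of finite perimeter $\tilde E$, the set $E := \tilde E^{(1)}$ satisfies $|E \Delta \tilde E| = 0$, is open up to the essential boundary, and enjoys $\partial E = \partial^* E$ (up to choosing the right topological representative, one uses that the density-$0$ and density-$1$ sets are open and closed, respectively, when $\partial^* E$ is well-behaved). From now on I replace $\tilde E$ by $E$.

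The key step is to show that $E$ is a \emph{$\Lambda$-minimizer}, i.e., there exist $r_0 > 0$ and $\Lambda > 0$ such that for every ball $B_r(x)$ of radius $r \leq r_0$ and every Borel set $F$ with $F \Delta E \Subset B_r(x)$ we have
\[
P(E) \leq P(F) + \Lambda |F \Delta E|.
\]
The idea is a \emph{volume-fixing variation}: pick an auxiliary small ball $B_\rho(y)$ with $y \notin \bar B_{r_0}(x)$ and $\partial^{*} E \cap B_\rho(y) \neq \emptyset$. A smooth compactly supported vector field $X$ in $B_\rho(y)$ whose flow $\Phi_t$ infinitesimally changes the volume of $E$ at a nontrivial rate produces, for small $s$, a diffeomorphism $\Phi_s$ such that $|\Phi_s(F)| = |E|$ while
\[
\bigl| P(\Phi_s(F)) - P(F) \bigr| \leq C |s| \leq C' \, \bigl||F| - |E|\bigr| \leq C'' |F \Delta E|.
\]
Testing the isoperimetric inequality against $\Phi_s(F)$ (which has volume $v = |E|$) yields $P(E) \leq P(\Phi_s(F)) \leq P(F) + \Lambda |F \Delta E|$, as desired. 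The main technical obstacle lies precisely here: one must guarantee that such a volume-changing vector field exists at some point of $\partial^{*} E$ away from $x$, and that $\Phi_s$ preserves enough structure on $F \cap B_\rho(y)$. Since $E$ has finite perimeter and nonzero measure, one can choose $y \in \partial^{*} E$ with a blow-up being a half-space, along which the first variation of volume is nonzero; the parameter $r_0$ must be small enough that $B_\rho(y)$ and $B_{r_0}(x)$ are disjoint, which is why $\Lambda$-minimality is only local.

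Once local $\Lambda$-minimality is established, the regularity statement follows from the classical theory: De Giorgi's excess decay (adapted by Tamanini to $\Lambda$-minimizers, and by Almgren/Bombieri to the Riemannian setting) implies that $\partial^{*} E$ is a $C^{1,\alpha}$ hypersurface, and Federer's dimension reduction argument, together with the Bernstein-type theorem of Simons forbidding nontrivial minimizing cones in dimension $\leq 7$, yields $\mathcal H^d(\partial^{\mathrm{sing}} E) = 0$ for every $d > n-8$, where $\partial^{\mathrm{sing}} E := \partial E \setminus \partial^{*} E$ is closed by definition. Declaring $\partial^{\mathrm{reg}} E := \partial^{*} E$, the decomposition follows.

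Finally, the mean curvature of $\partial^{\mathrm{reg}} E$ is constant: computing the first variation of the area along a one-parameter family of diffeomorphisms $\Phi_t$ preserving $|E|$ (built, as above, from two vector fields compactly supported in small disjoint balls intersecting $\partial^{\mathrm{reg}} E$, combined to preserve volume to first order) and using the minimality of $E$ gives that the mean curvature integrated against any zero-mean compactly supported smooth function on $\partial^{\mathrm{reg}} E$ vanishes, hence $H$ is constant. Standard elliptic regularity for the CMC equation (which is a quasilinear elliptic PDE in local graph coordinates) then bootstraps $\partial^{\mathrm{reg}} E$ from $C^{1,\alpha}$ to $C^\infty$. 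I expect the main obstacle throughout to be the volume-fixing step: the rest is essentially a citation of heavy but well-established machinery.
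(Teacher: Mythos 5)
The paper states this theorem without proof, attributing it to the classical works of Almgren, Bombieri, and Morgan, so there is no in-paper argument to compare against; I will evaluate your proposal on its own.

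Your route --- establish local $\Lambda$-minimality by a volume-fixing variation in an auxiliary ball disjoint from the perturbation, then invoke the De Giorgi/Tamanini/Almgren regularity theory for $\Lambda$-minimizers together with Federer's dimension reduction and Simons' classification of minimizing cones --- is exactly the standard proof, and the core is correct. Two points, one substantial and one cosmetic, deserve attention.

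The substantial point is a conflation of two different boundary notions that, if taken literally, collapses the statement. The paper's $\partial^{*}E$ is the \emph{essential} boundary, the set of points whose density with respect to $E$ is neither $0$ nor $1$. Your final paragraph treats $\partial^{*}E$ as the \emph{reduced} boundary $\mathcal{F}E$ (the set of points admitting a half-space blow-up), which is the set to which De Giorgi's excess decay and the $\epsilon$-regularity theorem apply. These coincide only up to an $\mathcal{H}^{n-1}$-null set. For the good open representative of a $\Lambda$-minimizer, the density estimates force \emph{every} point of $\partial E$ to have density bounded away from $0$ and $1$; hence $\partial E = \partial^{*}E$ in the paper's convention --- which is exactly the claim in the statement --- and in particular the essential boundary contains the singular points (e.g.\ the vertex of a Simons-type cone). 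Declaring $\partial^{\mathrm{reg}}E := \partial^{*}E$ would therefore make $\partial^{\mathrm{sing}}E$ empty. The correct identification is $\partial^{\mathrm{reg}}E := \mathcal{F}E$ (equivalently, the set of points of $\partial E$ near which $\partial E$ is a $C^{1,\alpha}$ graph), whose openness in $\partial E$ is a \emph{consequence} of the $\epsilon$-regularity theorem, not a definition; the singular set is then $\partial E\setminus\mathcal{F}E$, closed in $\partial E$ for that reason. Relatedly, the parenthetical remark that ``the density-$0$ and density-$1$ sets are open and closed, respectively'' is off: for the good representative both $E^{(1)}=E$ and $E^{(0)}=M\setminus\overline{E}$ are open.

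The cosmetic point is the ordering: $\tilde E^{(1)}$ is not open for a general finite-perimeter set, and the claim $\partial E = \partial^{*}E$ cannot be made before the density estimates are available. The logically clean order is the one you essentially carry out anyway --- establish $\Lambda$-minimality first, obtain the two-sided density estimates, and only then pass to the open representative whose topological boundary is $\mathrm{spt}\,\mu_E$ and equals the essential boundary. Once these two issues are repaired, the remainder (first-variation identity giving constant distributional mean curvature, elliptic bootstrap to $C^{\infty}$) is as you describe.
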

}

\subsubsection{Isoperimetric sets in $\mathbb R^n$}
We give a proof of the well-known isoperimetric inequality on $\mathbb R^n$.
\begin{theorem}
    Let $E\subset \mathbb R^n$ be a set of finite perimeter with $|E|=|B_1(0)|=\omega_n$. Then 
    \begin{equation}\label{eqn:Isoperimetric}
    P(E)\geq P(B_1(0)).
    \end{equation}
    If equality holds, then there is $p\in\mathbb R^n$ such that $|E\Delta B_1(p)|=0$.
\end{theorem}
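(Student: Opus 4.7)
The plan is to give an optimal-transport proof in the style of Gromov's argument with the Brenier map, which fits the paper's theme and carries over to several generalizations. Reduce first to the case $E$ bounded by truncation and the lower semicontinuity of the perimeter (\cref{precompactness}). Let $T\colon E\to \bar B_1(0)$ be the Brenier map pushing $\chi_E\,\mathrm{d}\H^n/|E|$ forward to $\chi_{B_1(0)}\,\mathrm{d}\H^n/\omega_n$. By Brenier's theorem $T = \D\varphi$ for some convex function $\varphi$, and since $|E|=|B_1(0)|$ the Monge--Amp\`ere equation reads $\det D^2\varphi(x)=1$ for a.e.\ $x\in E$.

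Applying the AM--GM inequality to the nonnegative eigenvalues of the symmetric matrix $D^2\varphi$,
\[
\mathrm{div}(T)(x) = \mathrm{tr}(D^2\varphi(x)) \geq n\bigl(\det D^2\varphi(x)\bigr)^{1/n} = n \quad \text{a.e.\ on $E$.}
\]
Together with the Gauss--Green theorem for sets of finite perimeter and $|T(x)|\leq 1$ (since $T$ takes values in $\bar B_1(0)$), this yields
\[
n\omega_n = n|E| \leq \int_E \mathrm{div}(T)\,\mathrm{d}\H^n = \int_{\p^* E}\langle T,\nu_E\rangle\,\mathrm{d}\H^{n-1} \leq \int_{\p^*E}|T|\,\mathrm{d}\H^{n-1} \leq P(E).
\]
Since $P(B_1(0)) = n\omega_n$, this establishes \eqref{eqn:Isoperimetric}.

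For rigidity, suppose $P(E) = P(B_1(0))$: each inequality above must be an equality. Equality in AM--GM forces $D^2\varphi=\mathrm{Id}$ a.e.\ on $E$, so on each essential connected component $T$ is a rigid translation $x\mapsto x+v$. Coupling this with $\langle T,\nu_E\rangle = |T|=1$ on $\H^{n-1}$-a.e.\ $x\in\p^*E$ pins $\p^*E$ inside a unit sphere, and a short bookkeeping argument, using that $T$ is measure-preserving onto $B_1(0)$ together with the (standard) connectedness of isoperimetric sets in $\mathbb R^n$, gives $|E\Delta B_1(p)|=0$ for some $p\in\mathbb R^n$.

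The main obstacle is making the divergence computation rigorous, since $T$ is only BV: Alexandrov's theorem realizes $D^2\varphi$ as a matrix-valued Radon measure with absolutely continuous part equal to $I$ a.e.\ on $E$ and a nonnegative singular part. One must therefore either invoke a BV Gauss--Green theorem (e.g.\ Chen--Torres--Ziemer) or mollify $\varphi$ and pass to the limit; happily, the singular contribution to $\mathrm{div}(T)$ is nonnegative, so discarding it does not damage the AM--GM step. A secondary technicality is that Brenier's theorem requires the source measure to be absolutely continuous, which here is immediate. An alternative route, avoiding all of this, is to replace the Brenier map by the Knothe triangular rearrangement, which is automatically Lipschitz on bounded sets at the price of a slightly messier rigidity analysis.
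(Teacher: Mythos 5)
Your proof is correct in outline and takes a genuinely different route from the paper's. The paper proves \eqref{eqn:Isoperimetric} by estimating the lower Minkowski content of $E$ from below with the Brunn--Minkowski inequality and then passing from Minkowski content to perimeter via the Ambrosio--Di Marino--Gigli relaxation theorem, deferring the equality case to Steiner symmetrization. You instead run the Gromov/Brenier mass-transport argument directly: push $\chi_E$ to $\chi_{B_1}$ via the Brenier map $\nabla\varphi$, note $\det D^2\varphi=1$ a.e., apply AM--GM to get $\operatorname{div}(\nabla\varphi)\geq n$, and close with Gauss--Green and $|\nabla\varphi|\leq 1$. Both routes ultimately rest on Brenier's theorem (the paper's Brunn--Minkowski is itself proved via McCann's displacement convexity), but yours bypasses Brunn--Minkowski and the relaxation lemma and yields the rigidity statement within the same framework, at the cost of the BV regularity issue you correctly flag: $\nabla\varphi$ is only BV, so the divergence step needs a BV Gauss--Green theorem or mollification, together with Alexandrov's theorem and the favorable sign of the singular part of $D^2\varphi$; the details are carried out rigorously in Figalli--Maggi--Pratelli. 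Two minor caveats: the reduction to bounded $E$ is unnecessary for the inequality (boundedness of the \emph{target} already gives $|\nabla\varphi|\leq 1$), and in any case it does not follow from lower semicontinuity of the perimeter, which points the wrong way -- one would truncate along coarea-good radii. For rigidity, equality in AM--GM gives $D^2\varphi=I$ only in the Alexandrov (a.e.) sense; you also need equality in the Gauss--Green step to kill the singular part of $D^2\varphi$, so that $\nabla\varphi$ is Lipschitz, before the translation-on-each-component argument and the measure-preservation/monotonicity bookkeeping can pin $E$ to a single ball.
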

\begin{proof}
    We only prove the inequality. For $A,B\subset \mathbb R$, let $A+B:=\{a+b:a\in A, b\in B\}$. Define the \emph{lower Minkowski content} of $E\subset \mathbb R^n$ as:
    \[
    \mathcal{M}(E):=\liminf_{r\to 0}\frac{|E+B_r|-|E|}{r}.
    \]
    It is known in general that the lower semicontinuous relaxation of $\mathcal{M}(\cdot)$ with respect to the convergence in measure (on finite measure sets) is the perimeter, see Ambrosio--Di Marino--Gigli \cite{ADMG17}. So it is enough to show that if $|E|=|B_1(0)|$, then 
    \[
    \mathcal{M}(E)\geq P(B_1(0)).
    \]
    By the Brunn--Minkowski inequality (\cref{BMinequality}) we have 
    \[
    \mathcal{M}(E) \geq \liminf_{r\to 0}\frac{\left(|E|^{1/n}+|B_r|^{1/n}\right)^n-|E|}{r}=n\omega_n=P(B_1(0)). 
    \]
    The proof by Steiner symmetrization is better suited to show the equality case: see \cite{MaggiBook}.
\end{proof}

\begin{remark}[Isoperimetric problem on model spaces and comments] We observe the following.
    \begin{enumerate}
        \item In $\mathbb R^n$ (but also in the round sphere $\mathbb S^n$, and the hyperbolic space $\mathbb H^n$) the only isoperimetric sets are geodesic balls. This is also true \emph{at the second order}. Namely, compact immersed oriented volume-preserving stable constant mean curvature (CMC) hypersurfaces in $\mathbb R^n$ (and also $\mathbb S^n,\mathbb H^n$) are geodesic spheres (\cite{BDC84, BDC88}).
        
        Moreover, in $\mathbb R^n$, compact oriented \emph{embedded} constant mean curvature hypersurfaces are geodesic spheres \cite{AlexandrovTheorem}. This is false if one substitutes embedded with immersed (see: Wente tori).
        \item There are several approaches to the proof of the isoperimetric inequality. The reader might consult, e.g., Lecture 4, the recent survey\footnote{\url{https://www.ams.org/journals/notices/202406/rnoti-p708.pdf}} by Brendle and Eichmair, or the references listed at the beginning of this note.
    \end{enumerate}
\end{remark}

\subsection{Brunn--Minkowski inequality and Curvature--Dimension condition}\label{BMandRCD}

{Let us give a proof of the Brunn--Minkowski inequality that is different from the one hinted at in \cref{BMinequality}. This proof, which leverages a result of \cite{McCann97}, unveils a connection with the theory of optimal transport and curvature-dimension conditions in the weak sense. {The statement of the Brunn--Minkowski inequality, which will be proved at the end of this section, is the following.
\begin{theorem}
    Let $A,B\subset \mathbb R^n$ be Borel sets with $|A|>0,|B|>0$. Then
    \[
|(1-t)A+tB|^{1/n} \geq (1-t)|A|^{1/n} + t|B|^{1/n}, \qquad \forall\, 0\leq t\leq 1.
\]
\end{theorem}
}

Let us recall that $e_i:X\times X\to  X$ denotes the projection on the $i$-th factor of the product, and $\mathbb P(X)$ denotes the set of probabilities on $X$.
For more details on what follows the interested reader can consult, e.g., Chapters 8, 10, 15, and 19 of \cite{AmbrosioBrueSemolaBook}. 
The reader can consult \cite[Chapter 10]{AmbrosioBrueSemolaBook} for other properties (e.g., being non-branching, or positively curved) of $(X,\mathrm{d})$ that are transmitted to $(\mathscr{P}_2(X),W_2)$.
\begin{definition}[Wasserstein space]
    Let $(X,\mathrm{d})$ be a complete and separable metric space, and define 
    \[
    \mathscr{P}_2(X):=\left\{\mu\in \mathbb P(X): \exists x_0\in X\,\text{s.t}\, \int_{X} \mathrm{d}(x,x_0)^2\mathrm{d}\mu(x)<+\infty\right\}.
    \]
    For $\mu,\nu\in \mathscr{P}_2(X)$, define
    \[
    W_2^2(\mu,\nu):=\min\left\{\int_{X\times X}\mathrm{d}^2(x,y)\mathrm{d}\pi(x,y):\pi\in \mathbb P(X\times X):(e_1)_\#\pi =\mu,(e_2)_\#\pi =\nu\right\}.
    \]
    Then $(\mathscr{P}_2(X),W_2)$ is a complete and separable metric space. Moreover, if $(X,d)$ is compact (resp., geodesic), $(\mathscr{P}_2(X),W_2)$ is also compact (resp., geodesic).
\end{definition}

\begin{definition}[Rény entropy]
Let $(X,\mathrm{d})$ be a complete and separable metric space. Let $\mathfrak{m}$ be a Borel measure which is finite on bounded sets. Let $\mu\in\mathscr{P}_2(X)$ and $N\geq 1$. Then, if we decompose $\mu=\rho\mathfrak{m}+\mu^s$, where $\rho\in L^1_{\mathrm{loc}}(\mathfrak{m})$ and $\mu^s\perp \mathfrak{m}$, the \emph{$N$-Rény entropy} of $\mu$ is 
\[
\mathcal{E}_N(\mu):=-\int_X \rho^{1-\frac{1}{N}}\mathrm{d}\mathfrak{m}.
\]
\end{definition}
An important feature of the $n$-Rény entropy is that it is convex along geodesics of $\mathscr{P}_2(\mathbb R^n)$. The next proposition, which implies Brunn--Minkowski inequality as we will see below, is due to McCann. We sketch its proof referring the reader to \cite{AmbrosioBrueSemolaBook} for the missing details.
\begin{theorem}[{\cite[Proposition 1.2]{McCann97}}]\label{McCann97Thm}
    The $n$-Rény entropy is convex along geodesics connecting two probability measures in $\mathscr{P}_2(\mathbb R^n)$ that are absolutely continuous with respect to Lebesgue $\mathscr{L}^n$.
\end{theorem}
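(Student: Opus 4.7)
The plan is to invoke Brenier's theorem to produce the optimal transport map between the two absolutely continuous measures, write down the displacement interpolation, and then rewrite the Rényi entropy along that geodesic as an integral whose integrand is pointwise convex in $t$ thanks to the concavity of $\det^{1/n}$ on symmetric positive semidefinite matrices.

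First I would set $\mu_0 = \rho_0 \mathscr{L}^n$ and $\mu_1 = \rho_1 \mathscr{L}^n$ and apply Brenier's theorem, which provides a convex function $\varphi\colon\mathbb{R}^n\to\mathbb{R}$ such that $T:=\nabla\varphi$ is $\mathscr{L}^n$-a.e.\ defined, pushes $\mu_0$ onto $\mu_1$, and realizes the unique $W_2$-optimal coupling. The associated $W_2$-geodesic is $\mu_t := (T_t)_\#\mu_0$ with
\[
T_t(x) := (1-t)x + t\,\nabla\varphi(x) = \nabla\bigl(\tfrac{1-t}{2}|x|^2 + t\,\varphi(x)\bigr),
\]
which is itself the gradient of a convex function and hence is a.e.\ injective.

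Next, by Aleksandrov's theorem the convex function $\varphi$ admits a pointwise Hessian $D^2\varphi(x)$ at $\mathscr{L}^n$-a.e.\ $x$, so $DT_t(x) = (1-t)I + t\,D^2\varphi(x)$ is a.e.\ a symmetric positive semidefinite matrix. McCann's extension of the Monge--Ampère change of variables to convex potentials then yields that $\mu_t = \rho_t\,\mathscr{L}^n$ is absolutely continuous, with
\[
\rho_0(x) = \rho_t\bigl(T_t(x)\bigr)\,\det\bigl((1-t)I + t\,D^2\varphi(x)\bigr) \qquad \text{$\mathscr{L}^n$-a.e.}
\]
Plugging into the definition of $\mathcal{E}_n$ and changing variables $y = T_t(x)$ gives
\[
\mathcal{E}_n(\mu_t) = -\int_{\mathbb{R}^n}\rho_0(x)^{1-1/n}\,\det\bigl((1-t)I + t\,D^2\varphi(x)\bigr)^{1/n}\,dx.
\]

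Finally, the key algebraic input is the Brunn--Minkowski-type inequality that $A\mapsto(\det A)^{1/n}$ is concave on the cone of symmetric positive semidefinite $n\times n$ matrices. Applied pointwise to the matrix pencil $(1-t)I + t\,D^2\varphi(x)$, this gives concavity of $t\mapsto\det(DT_t(x))^{1/n}$, hence convexity of the full integrand $t\mapsto -\rho_0(x)^{1-1/n}\det(DT_t(x))^{1/n}$, and integration in $x$ preserves convexity. I expect the main obstacle to be the rigorous justification of the change of variables: because $\varphi$ is merely convex, $T_t$ fails to be $C^1$ in general, and one must couple Aleksandrov's a.e.\ twice-differentiability with McCann's careful argument (excluding an $\mathscr{L}^n$-null exceptional set and controlling the singular part of the push-forward) in order to legitimately write down the Monge--Ampère identity above.
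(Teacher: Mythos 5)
Your proposal is correct and follows essentially the same route as the paper: Brenier's theorem for the optimal map, Aleksandrov a.e.\ twice-differentiability, McCann's Monge--Amp\`ere change of variables, and Minkowski's determinant inequality (concavity of $A\mapsto(\det A)^{1/n}$ on positive semidefinite matrices). The only cosmetic difference is that the paper keeps the abstract form $U(s)$ and invokes the criterion that $s\mapsto s^n U(s^{-n})$ is convex and non-increasing, whereas you substitute $U(s)=-s^{1-1/n}$ directly so the integrand simplifies to $-\rho_0(x)^{1-1/n}\det(DT_t(x))^{1/n}$; these are equivalent.
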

\begin{proof}[Sketch]
    Let $\mathscr{P}_2(\mathbb R^n)\ni \mu_0,\mu_1\ll \mathscr{L}^n$. Let $\nabla\phi$, where $\phi:\mathbb R^n\to (\infty,\infty]$ is convex and lower semicontinuous, be the transport map from $\mu_0$ to $\mu_1$. {Namely,
$$
(\nabla\phi)_\# \mu_0 = \mu_1,
$$ 
and $\nabla\phi$ realizes the minimum of the Monge problem with quadratic cost:
$$
\int_{\mathbb R^n} |x - \nabla\phi(x)|^2 \, \mathrm{d}\mu_0(x)
=
\min_{T_\# \mu_0 = \mu_1}
\int_{\mathbb R^n} |x - T(x)|^2 \, \mathrm{d}\mu_0(x).
$$} Then the geodesic connecting $\mu_0$ to $\mu_1$ is
    \[
    \mu_t = ((1-t)\mathrm{id}+t\nabla\phi)_\# \mu_0=:(T_t)_\#\mu_0.
    \]
    Let us call $U(s):=-s^{1-\frac{1}{n}}$, and $\rho$ the density of $\mu_0$ with respect to $\mathscr{L}^n$. It can be shown that, for every $t\in [0,1]$,
    \[
    \mathcal{E}_n(\mu_t)=\int_{D_0\setminus\Sigma} U\left(\frac{\rho(x)}{\det\nabla T_t(x)}\right)\det\nabla T_t(x)\mathrm{d}\mathscr{L}^n,
    \]
    where $D_0$ is the $\mathscr{L}^n$-full-measure set of points $x$ where both $\phi$ and $\nabla\phi$ are differentiable, and $\Sigma:=\{x:\det\nabla T_t(x)=0\}$ is $\mathscr{L}^n$-negligible. Now observe that for every $x\in D_0\setminus\Sigma$ $a(t):=(\det\nabla T_t(x))^{1/n}$ is concave, and for every $z>0$, and $x\in D_0\setminus\Sigma$, we have that $b(t):=z^nU(\rho(x)z^{-n})$ is convex and non-increasing. Then $[0,1]\ni t \mapsto \mathcal{E}_n(\mu_t)$ is convex.
\end{proof}
As one can see inspecting the proof, the key property that we are using is that the function $U(s)$ defining the Rény entropy $\mathcal{E}_n$ has the following property: $(0,\infty)\ni s\mapsto s^n U(s^{-n})$ is convex and non-increasing.

The previous \cref{McCann97Thm} can also be understood as one of the starting point of the abstract theory of spaces with curvature lower bounded in the weak sense. 
\begin{definition}[Sturm \cite{Sturm1, Sturm2}, Lott--Villani \cite{LottVillani}]\label{defCD}
    Let $(X,\mathrm{d})$ be a complete and separable metric space. Let $\mathfrak{m}$ be a Borel measure which is finite on bounded sets. Let $N\geq 1$. We say that $(X,\mathrm{d},\mathfrak{m})$ \emph{satisfies the $\mathrm{CD}(0,N)$ condition} if the following holds. 

    For any $\mathscr{P}_2(X)\ni \mu_0,\mu_1\ll \mathfrak{m}$  with bounded support, there exists a geodesic $(\mu_t)_{t\in [0,1]}$ in $(\mathscr{P}_2(X),W_2)$ such that $\mu_t\ll \mathfrak{m}$ for every $t\in [0,1]$ and $[0,1]\ni t\mapsto \mathcal{E}_{N'}(\mu_t)$ is convex, for all $N'\geq N$.
\end{definition}

\begin{proof}[Proof of Brunn--Minkowski inequality using \cref{McCann97Thm}]
Let $A,B$ be Borel sets with $\mathscr{L}^n(A)>0$ and $\mathscr{L}^n(B)>0$. Let $\mu_t$ be the $W_2$-geodesic connecting $\mu_A:=\frac{\chi_A}{\mathscr{L}^n(A)}\mathscr{L}^n$ and $\mu_B:=\frac{\chi_B}{\mathscr{L}^n(B)}\mathscr{L}^n$. Thus, by \cref{McCann97Thm} we have, for every $t\in [0,1]$,
\[
-\mathcal{E}_n(\mu_t) \geq -(1-t)\mathcal{E}_n(\mu_A) - t\mathcal{E}_n(\mu_B) = (1-t)\mathscr{L}^n(A)^{1/n} + t\mathscr{L}^n(B)^{1/n}.
\]
Notice that by Jensen inequality, whenever a probability measure $\mu=\rho\mathscr{L}^n$ is supported on a closed set $C$, then $-\mathcal{E}_n(\mu)\leq \mathscr{L}^n(C)^{1/n}$. Thus, since $\mu_t$ is supported on the set $(1-t)A+tB$ (the optimal transport takes place along geodesics, see \cite{AmbrosioBrueSemolaBook} for more details), we finally get
\[
\mathscr{L}^n((1-t)A+tB)^{1/n} \geq (1-t)\mathscr{L}^n(A)^{1/n} + t\mathscr{L}^n(B)^{1/n},
\]
as desired.
\end{proof}

Verbatim as in the proof of Brunn--Minkowski inequality above we have the following. \cref{BMonCD0n} will be useful in Lecture 4.
\begin{theorem}[Brunn--Minkowski inequality on $\mathrm{CD}(0,N)$ spaces]\label{BMonCD0n}
    Let $N\geq 1$, and  $(X,\mathrm{d},\mathfrak{m})$ be a $\mathrm{CD}(0,N)$ metric measure space. Let $A_0,A_1$ be Borel sets with positive $\mathfrak{m}$ measure. For every $t\in [0,1]$ let us define 
    \[
    A_t:=\{y\in X: \exists (x_0,x_1)\in A_0\times A_1,\,\, \mathrm{d}(x_0,y)=t\mathrm{d}(x_0,x_1),\,\, \mathrm{d}(y,x_1)=(1-t)\mathrm{d}(x_0,x_1) \}.
    \]
    Then, for every $t\in [0,1]$ and $N'\geq N$ we have 
    \[
    \mathfrak{m}(A_t)^{1/N'} \geq (1-t)\mathfrak{m}(A_0)^{1/N'} + t\mathfrak{m}(A_1)^{1/N'}.
    \]
\end{theorem}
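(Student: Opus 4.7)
The plan is to follow verbatim the Euclidean argument just given for the proof of Brunn--Minkowski via McCann's theorem, replacing \cref{McCann97Thm} with the defining convexity property of $\mathrm{CD}(0,N)$ spaces (\cref{defCD}) and Lebesgue measure with $\mathfrak{m}$.

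First, I would reduce to the case in which $A_0$ and $A_1$ are bounded: by inner regularity and monotone convergence, it suffices to prove the inequality for $A_0\cap B_R,A_1\cap B_R$ for large $R$ and let $R\to\infty$. Once $A_0,A_1$ are bounded with $\mathfrak{m}(A_i)\in (0,\infty)$, set
\[
\mu_0:=\frac{\chi_{A_0}}{\mathfrak{m}(A_0)}\mathfrak{m},\qquad \mu_1:=\frac{\chi_{A_1}}{\mathfrak{m}(A_1)}\mathfrak{m}.
\]
These lie in $\mathscr{P}_2(X)$, are absolutely continuous with respect to $\mathfrak{m}$, and have bounded support. Applying the $\mathrm{CD}(0,N)$ assumption yields a $W_2$-geodesic $(\mu_t)_{t\in[0,1]}$ with $\mu_t=\rho_t\mathfrak{m}\ll \mathfrak{m}$ along which $\mathcal{E}_{N'}$ is convex for every $N'\geq N$.

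Next, I would argue that $\mu_t$ is concentrated on the set $A_t$ appearing in the statement. Indeed, any $W_2$-geodesic between measures with bounded support is represented by an optimal dynamical plan on geodesics of $(X,\mathrm{d})$; for such a plan almost every geodesic $\gamma$ has $\gamma(0)\in A_0$ and $\gamma(1)\in A_1$, so $\gamma(t)\in A_t$ by the very definition of $A_t$. The convexity of the entropy along $(\mu_t)$ then reads
\[
-\mathcal{E}_{N'}(\mu_t)\geq (1-t)\bigl(-\mathcal{E}_{N'}(\mu_0)\bigr)+t\bigl(-\mathcal{E}_{N'}(\mu_1)\bigr)=(1-t)\mathfrak{m}(A_0)^{1/N'}+t\mathfrak{m}(A_1)^{1/N'},
\]
where I used the direct computation $-\mathcal{E}_{N'}(\mu_i)=\mathfrak{m}(A_i)^{-(1-1/N')}\mathfrak{m}(A_i)=\mathfrak{m}(A_i)^{1/N'}$.

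Finally I would bound $-\mathcal{E}_{N'}(\mu_t)$ from above by $\mathfrak{m}(A_t)^{1/N'}$ via Jensen: since $s\mapsto s^{1-1/N'}$ is concave on $[0,\infty)$ for $N'\geq 1$, applying Jensen to the probability measure $\mathfrak{m}(A_t)^{-1}\chi_{A_t}\mathfrak{m}$ gives
\[
\frac{1}{\mathfrak{m}(A_t)}\int_{A_t}\rho_t^{1-1/N'}\,\mathrm{d}\mathfrak{m}\leq \left(\frac{1}{\mathfrak{m}(A_t)}\int_{A_t}\rho_t\,\mathrm{d}\mathfrak{m}\right)^{1-1/N'}=\mathfrak{m}(A_t)^{-(1-1/N')},
\]
using $\int\rho_t\,\mathrm{d}\mathfrak{m}=1$. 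Chaining the two inequalities yields the claim. The only delicate point, and the step I would spend most time justifying, is the concentration of $\mu_t$ on $A_t$: this relies on knowing that $W_2$-geodesics lift to probability measures on the space of constant-speed geodesics, which is part of the standard metric optimal transport toolbox treated in the references above and can safely be invoked here.
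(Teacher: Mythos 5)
Your argument is correct and is precisely what the paper intends when it says the result follows ``verbatim'' from the Euclidean proof via \cref{McCann97Thm}: replace McCann's convexity by \cref{defCD}, identify the concentration set of $\mu_t$, and apply Jensen. The two points you add beyond the Euclidean template --- the reduction to bounded sets so that \cref{defCD} applies, and the justification via dynamical optimal plans that $\mu_t$ is concentrated on $A_t$ --- are exactly the details the paper leaves implicit, and both are handled correctly.
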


We do not delve into giving the precise definition of $\mathrm{CD}(K,N)$ and $\mathrm{RCD}(K,N)$ spaces when $K$ might be non-zero, or $N$ might be infinite. We refer the reader to the seminal papers \cite{LottVillani, Sturm1, Sturm2, AGS1Inve, AGSDuke, Gigli12}. We only remark that on a complete $N$-dimensional Riemannian manifold the $(\mathrm{R})\mathrm{CD}(K,N)$-condition is equivalent to $\mathrm{Ric}\geq K$, see \cite[Theorem 1.7]{Sturm2}.

}

\subsection{Statement of the sharp concavity inequality}
In Lecture 2 we are going to prove the following result. We start by stating it. We focus on the case where $M$ is a Riemannian manifold, but the proof works for arbitrary $n$-dimensional $\mathrm{RCD}(k,n)$-spaces $(X,\mathrm{d},\mathcal{H}^n)$, see \cite{ConcavitySharpAPPS}.

\begin{theorem}[Bavard--Pansu \cite{BavardPansu86}, Bray \cite{BrayPhD}, Bayle \cite{Bayle04}, A--Pa--Po--S \cite{ConcavitySharpAPPS}]\label{thm:ConcavitySharp}
    Let $(M^n,g)$ be a complete Riemannian manifold with $\inf_{x\in M}|B_1(x)|>0$. Assume that $\mathrm{Ric}\geq k\cdot g$ as quadratic forms, for some $k\in\mathbb R$. Then
    \begin{equation}\label{eqn:Viscosity}
    -I_M''I_M \geq k + \frac{(I_M')^2}{n-1}, \qquad \textit{holds in the viscosity sense on $(0,|M|)$},
    \end{equation}
    {i.e., for every $v\in (0,|M|)$ and every smooth real-valued function $\varphi$ such that $\varphi(v)=I_M(v)$ and $\varphi(x)\leq I_M(x)$ in a neighborhood of $v$, we have
    \[
    -\varphi''(v)\varphi(v)\geq k+\frac{\varphi'(v)^2}{n-1}.
    \]
    }
\end{theorem}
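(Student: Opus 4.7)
The plan is to prove the statement by constructing, at each $v_0 \in (0,|M|)$, a $C^2$ upper barrier $\psi$ to the profile $I$ satisfying the pointwise inequality at $v_0$. Indeed, since $I > 0$, the inequality $-I''I \geq k + (I')^2/(n-1)$ is an upper bound on $I''$, and in the viscosity sense of such an upper bound one needs precisely to show that, for every $C^2$ function $\psi$ with $\psi \geq I$ in a neighborhood of $v_0$ and $\psi(v_0) = I(v_0)$, the estimate $-\psi''(v_0)\, I(v_0) \geq k + (\psi'(v_0))^2/(n-1)$ holds. The barrier will come from the normal parallel deformation of an isoperimetric set of volume $v_0$.

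First, I would secure an (honest or generalized) isoperimetric set $E_0$ with $|E_0| = v_0$. In the compact case this is \cref{lem:Existence}; in the non-compact case, the hypothesis $\inf_x |B_1(x)| > 0$ combined with $\mathrm{Ric} \geq k\cdot g$ is exactly what enables Nardulli's concentration-compactness argument to produce a (possibly generalized) minimizer whose regular part has the same local structure. By \cref{thm:Regularity}, $\partial^{\mathrm{reg}} E_0$ is a smooth hypersurface of constant mean curvature $H_0$, and $\partial^{\mathrm{sing}} E_0$ has codimension at least $8$. For small $|t|$, let $\Phi_t(x) := \exp_x(t\nu(x))$ where $\nu$ is the outward normal along $\partial^{\mathrm{reg}} E_0$; set $E_t$ to be the resulting one-sided parallel deformation and define $v(t) := |E_t|$, $A(t) := P(E_t)$. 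The coarea formula gives $v'(t) = A(t)$, and since $v'(0) = I(v_0) > 0$, the function $v$ is a local $C^2$ diffeomorphism near $0$; define $\psi$ by $\psi(v(t)) := A(t)$. Each $E_t$ is a competitor at volume $v(t)$ and $E_0$ is a minimizer, so $\psi \geq I$ locally with equality at $v_0$: this is the desired upper barrier.

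Next, one evaluates $\psi'(v_0)$ and $\psi''(v_0)$ via the first and second variation of area. Since the area element evolves by $\partial_t dA_t = H_t\, dA_t$, one has $A'(t) = \int_{\partial E_t} H_t\, d\mathcal{H}^{n-1}$ and $A''(t) = \int_{\partial E_t} (\partial_t H_t + H_t^2)\, d\mathcal{H}^{n-1}$. Since $H_0$ is constant on $\partial^{\mathrm{reg}}E_0$, the chain rule gives $\psi'(v_0) = H_0$ and $\psi''(v_0) = A''(0)/A(0)^2 - H_0^2/A(0)$. The crucial analytic input is the Riccati inequality along the normal geodesic $\gamma$,
\[
\partial_t H_t \leq -\frac{H_t^2}{n-1} - \mathrm{Ric}(\dot\gamma,\dot\gamma) \leq -\frac{H_t^2}{n-1} - k,
\]
which combines the Cauchy--Schwarz bound $|II|^2 \geq H^2/(n-1)$ with the Ricci lower bound. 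Plugging this in yields $A''(0) \leq A(0)\bigl((n-2)H_0^2/(n-1) - k\bigr)$, and a short algebraic simplification gives $\psi''(v_0) \leq -(k + H_0^2/(n-1))/I(v_0)$, which is the desired viscosity inequality.

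The variational computation itself is classical. The main obstacle is to justify the variation formulas despite the presence of $\partial^{\mathrm{sing}}E_0$ and, in the non-compact case, to work with a \emph{generalized} isoperimetric region which may split into pieces sitting inside several pointed limit spaces. The first point is handled by cutoff arguments exploiting that the singular set has codimension at least $8$, and the second by distributing the normal variation across all limit pieces simultaneously so that the resulting sum is still a valid upper barrier; this is the technical content of \cite{ConcavitySharpAPPS}, and the same strategy carries the argument through to the full $\mathrm{RCD}(k,n)$ generality.
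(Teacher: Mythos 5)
Your compact-case argument (your paragraphs two and three) is correct and essentially the argument the paper gives in \textbf{Case 1} and \textbf{Case 2}: normal parallel deformation, the coarea identity $v'(t)=A(t)$, and the Riccati inequality along normal geodesics are just a repackaging of the paper's first and second variation formulas \eqref{eqn:VariationsVolume}--\eqref{eqn:SecondVariation} with $\varphi\equiv 1$. The cutoff strategy for $n\geq 8$ is also what the paper does. One small inaccuracy worth flagging: the viscosity inequality as defined in \cref{rem:RegularityIM} is tested against functions $\varphi$ touching $I_M$ \emph{from below}, not from above. What you actually produce is an upper barrier $\psi\geq I_M$ with $\psi(v_0)=I_M(v_0)$ satisfying the pointwise inequality at $v_0$; this is the ``barrier sense'' in the paper, and it does imply the viscosity statement by a one-line comparison of second derivatives of any lower test function against $\psi$, but it is not the definition.

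The genuine concern is your last paragraph, which mischaracterizes how the non-compact case is actually handled. The phrase ``distributing the normal variation across all limit pieces'' suggests that one can still run the classical second-variation/Riccati computation on the generalized minimizer. That is not possible: by \cref{concentrationcompactness} the isoperimetric set lives on a pointed Ricci-limit space $(X,\mathrm{d})$, and such a space is in general not a smooth manifold, so $\partial E$ has no classical second fundamental form, no smooth normal geodesic flow, and the Riccati ODE has no pointwise meaning. The paper's actual route in \textbf{Case 3} is to replace the second-variation computation by a \emph{weak Laplacian comparison} for the distance function $\mathrm{d}_{\overline E}$, namely \eqref{eqn:Distributional}, proved by an entirely different mechanism (viscosity/distributional equivalence for the Laplacian in $\mathrm{RCD}$ spaces, Hopf--Lax duality, a two-point perturbation/contradiction argument comparing perimeters of $E_{s,t}$, and the one-dimensional localization technique to sharpen the dimensional constant). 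Feeding this into \cref{MeanCurvatureMonotonicity} produces an \emph{affine} upper barrier for $I_M^{n/(n-1)}$ rather than a $C^2$ one coming from a variation. So while you are right that the technical content is in \cite{ConcavitySharpAPPS}, deferring to it with the description you give does not reflect the mechanism, and the normal-variation strategy you outline would break down at precisely this step. If you want a correct one-line summary of Case 3, say that the role of the Riccati inequality is taken over by a weak (viscosity/distributional) Laplacian comparison for $\mathrm{d}_{\overline E}$ on the Ricci-limit space, whose integrated form replaces the second variation.
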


\begin{remark}[About $I_M>0$]\label{rem:I>0}
    Notice that if $M$ is compact, the hypothesis $\inf_{x\in M}|B_1(x)|>0$ is redundant. 

    Also, if $M$ is compact, isoperimetric sets with volume $v$ exist for every $v\in (0,|M|)$, and thus (since they have positive perimeter by the local isoperimetric inequality) $I(v)>0$ for every $v\in (0,|M|)$. When $M$ is non-compact, one can still prove that $\inf_{x\in M}|B_1(x)|>0$ and $\mathrm{Ric}\geq k\cdot g$ imply that $I(v)>0$ for every $v\in (0,|M|)$: see, e.g., \cite{AFP21}.
\end{remark}
\begin{remark}[Regularity of $I_M$]\label{rem:RegularityIM}
    Under the assumptions of \cref{thm:ConcavitySharp} one can directly prove\footnote{The classical argument is due to Gallot \cite{Gallot88}, see also Munoz Flores--Nardulli \cite{FloresNardulli20}.} that $I_M$ is locally $\frac{n-1}{n}$-H\"older continuous on $(0,|M|)$, thus continuous. 
    
    The inequality \eqref{eqn:Viscosity} holds also in the appropriate distributional sense, after rewriting it as a differential inequality on $I_M^{\frac{n}{n-1}}$. Classical analysis (see \cref{ExRegularity}) tells us that \eqref{eqn:Viscosity}, joined with the fact that $I_M$ is continuous, implies that it is locally semi-concave. Hence $I_M'(v)$ exists for co-countably many $v\in (0,|M|)$, $I_M''(v)$ exists for almost every $v\in (0,|M|)$, and $I_M''$ is a locally bounded measure. Moreover, as a consequence, \eqref{eqn:Viscosity} also holds pointwise almost everywhere.
    \end{remark}
    \begin{remark}[The case $k=0$]
When $k=0$ the inequality \eqref{eqn:Viscosity} is equivalent to $(I^{\frac{n}{n-1}})''\leq 0$ in the distributional sense. Hence, since $I$ is continuous, the latter is equivalent to saying $I^{\frac{n}{n-1}}$ is concave.
\end{remark}
    \begin{question}\label{Questionk-1}
        Assume $k=-1$. Is \eqref{eqn:Viscosity} still true even if $\inf_{x\in M}|B_1(x)|=0$?
    \end{question}
    
    Understanding the precise regularity, or even better the precise expression of $I_M$ might be a very challenging problem, already in some fairly simple explicit cases, see the questions below.

\begin{question}\label{QuestionDifferentiability}
    Construct, if possible, a complete non-compact $(M^n,g)$ with $\mathrm{Ric}\geq 0$, $\inf_{x\in M}|B_1(x)|>0$, and such that the following holds: there exists a sequence of $v_i\to\infty$ such that $I_M$ is not differentiable at $v_i$.
\end{question}
\begin{question}
    Let $M=[0,1]^3$. Compute $I_M$. Here, we consider the \emph{relative isoperimetric problem}: the perimeter only takes into account the part of $E$ inside $M$, i.e., in $(0,1)^3$. See \cite{RitoreIsopCube} and related references for contributions on the latter problem. See also the recent \cite{MilmanCubesSlabs}.
\end{question}

\newpage

\section{Second Lecture}

\subsection{Proof of the sharp concavity inequality}
\begin{proof}[Proof of \cref{thm:ConcavitySharp}]
    There are three increasing levels of difficulties. We will go through showing that \eqref{eqn:Viscosity} holds in the barrier sense. {By that we mean that for every $v\in (0,|M|)$, and every $\varepsilon>0$, there is a smooth function $A:(v-\delta,v+\delta)\to \mathbb R$ defined on some neighborhood of $v$ such that $I_M(v)=A(v)$, $I_M(x)\leq A(x)$ for every $x\in (v-\delta,v+\delta)$, and 
    \[
    -A''(v)A(v)\geq k+\frac{A'(v)^2}{n-1}-\varepsilon.
    \]}
    We leave to the reader as an exercise to show that if \eqref{eqn:Viscosity} holds in the barrier sense, then it holds in the viscosity sense. 
    \medskip

    \textbf{Case 1.} $M^n$ is compact and $n\leq 7$. The barrier inequality follows from computing the second variation of the area of an isoperimetric set $E$. Let $E$ be an isoperimetric set with volume $v_0\in (0,|M|)$. It exists, and $\partial E$ is smooth by \cref{lem:Existence}, and \cref{thm:Regularity}, respectively.
    
    For $\varphi\in C^\infty(M)$, let us consider a smooth family of sets $\{E_t\}_{t\in(-\varepsilon,\varepsilon)}$, such that $E_0=E$, the variational vector field $X_t$ along $\p E_t$ at $t=0$ is $\varphi\nu$ (where $\nu$ denotes the outer unit normal of $\p E_t$), and $\nabla_{X_t}X_t = 0$ at $t=0$. For example $\partial E_t:=F_t(\partial E)$, where $F_t(x):=\exp(t\varphi(x)\nu(x))$ for $x\in\partial E$, satisfies the assumptions.
    
    Define $V(t):=|E_t|$ and $A(t):=P(E_t)$; thus $V(0)=v_0$, $A(0)=I(v_0)$. Let $H$ be the mean curvature of $\partial E$. We compute the first and second variations of the volume at the initial time:
    \begin{equation}\label{eqn:VariationsVolume}
        \frac{\d V}{\d t}(0)=\int_{\p E} \varphi,\qquad \frac{\d ^2V}{\d t^2}(0)=\int_{\p E}H\varphi^2. 
    \end{equation}
    Next, we compute the first variation of the area:
    \begin{equation}\label{eqn:FirstVariationOfArea}
        \frac {\d A}{\d t}(0)=\int_{\p E} H\varphi,
    \end{equation}
    and, finally, the second variation of the area is
    \begin{equation}\label{eqn:SecondVariation}
        \begin{aligned}
            \frac {\d^2 A}{\d t^2}(0) &= \int_{\p E} \Big(-\Delta_{\p E}\varphi-\Ric(\nu,\nu)\varphi-|\mathrm{II}|^2\varphi\Big)\varphi + H^2\varphi^2. \\
        \end{aligned}
    \end{equation}
    
    Since $E$ is a volume-constrained minimizer, we must have $\frac{\mathrm{d}A}{\mathrm{d}t}(0)=0$ whenever $\frac{\mathrm{d}V}{\mathrm{d}t}(0)=0$. Hence $H$ is constant due to \eqref{eqn:FirstVariationOfArea} and the first of \eqref{eqn:VariationsVolume}.\footnote{We have just re-discovered the fact that boundaries of isoperimetric sets have constant mean curvature.} From now on, we fix the choice $\varphi=1$ in the variation. Since $V(t)$ is strictly monotone in $t$ in a neighbourhood of 0, due to the first of \eqref{eqn:VariationsVolume}, we may view $A$ as a smooth function in $V$, defined in some neighborhood of $v_0$. The value of the constant $H$ is thus obtained through the chain rule:
    \begin{equation}\label{eqn:DefineA'}
        A'(v_0)=\frac{\d A}{\d V}(v_0)=\frac{\frac{\d A}{\d t}(0)}{\frac{\d V}{\d t}(0)}=\frac{\int_{\p E} H}{\int_{\p E} 1}=H.\ 
    \end{equation}
    Applying our choice $\varphi=1$ to \eqref{eqn:SecondVariation}, and the trace inequality $|\mathrm{II}|^2\geq H^2/(n-1)$, we obtain
    \begin{equation}\label{eqn:EstimateSecondVariation}
        \frac {\d^2 A}{\d t^2}(0) = \int_{\p E} -\mathrm{Ric}(\nu,\nu)-|\mathrm{II}|^2+H^2 \leq \int_{\p E} -\mathrm{Ric}(\nu,\nu)+\frac{n-2}{n-1}H^2.
    \end{equation}
    Next, by using the chain rule and the formulae for the derivatives of the inverse function, \begin{equation}\label{eqn:ChainRule}
        A''(v_0)= \left(\frac{\d V}{\d t}(0)\right)^{-2}\frac{\d^2 A}{\d t^2}(0) - \left(\frac{\d V}{\d t}(0)\right)^{-3}\frac{\d A}{\d t}(0)\cdot \frac{\d^2 V}{\d t^2}(0).
    \end{equation}
    Thus, joining \eqref{eqn:ChainRule}, \eqref{eqn:EstimateSecondVariation}, \eqref{eqn:VariationsVolume}, and \eqref{eqn:FirstVariationOfArea}, and using $\mathrm{Ric}(\nu,\nu)\geq k$,  we get
    \begin{align}
        P(E)^2\cdot A''(v_0) &\leq \int_{\p E} \left(-\mathrm{Ric}(\nu,\nu)+\frac{n-2}{n-1}H^2\right) - \int_{\p E} H^2\\
        &\leq \left(-k-\frac{1}{n-1}H^2\right)P(E).
    \end{align}
    By using the previous inequality, the fact that $H=A'(v_0)$, and $P(E)=A(v_0)$, we finally obtain
    \begin{equation}\label{eqn:BarrierEquation}
        A(v_0)A''(v_0)\leq-\frac{A'(v_0)^2}{n-1}-k.
    \end{equation}
    Notice that $I(v_0)=A(v_0)$ and $I(v)\leq A(v)$ in some neighborhood of $v_0$, so $A$ is an upper barrier of $I$ which satisfies \eqref{eqn:BarrierEquation}. Thus \eqref{eqn:Viscosity} holds in the barrier sense, as stated at the beginning of this proof, and thus in the viscosity sense. This proves the desired result.
    \medskip

    \textbf{Case 2.} $M^n$ is compact and $n\geq 8$. We have to be careful about the variation here, because $\partial E$ might have a singular part $\partial^{\mathrm{sing}}E$. Let $E\subset M$ be an isoperimetric set with volume $v\in (0,|M|)$. It exists (\cref{lem:Existence}), and, by \cref{thm:Regularity} it has a representative whose boundary $\partial E$ can be decomposed as $\partial^{\mathrm{reg}}E\sqcup \partial^{\mathrm{sing}}E$, where $\mathcal{H}^d(\partial^{\mathrm{sing}}E)=0$ for every $d>n-8$, and $\partial^{\mathrm{reg}}E$ is a smooth open hypersurface with constant mean curvature.

    Hence, for each $\delta\ll 1$, we can find a finite collection of balls $B(x_i,r_i)$ covering $\psing E$ with $x_i\in \psing E$ and $r_i<\delta$, such that $\sum r_i^{n-7}\leq1$. For each $i$, we find a smooth function $\eta_i$ such that
    \[\eta_i|_{B(x_i,2r_i)}=0,\quad \eta_i|_{M\setminus B(x_i,3r_i)}=1,\quad |\D_M\eta_i|\leq 2r_i^{-1}.\]
    
    We claim that, by using the minimality property, for each $x\in M$ and $r<1$ we have
    \begin{equation}\label{eq:area_in_ball}
        P(E,B_r(x))\leq Cr^{n-1},
    \end{equation}
    where $C$ depends only on $M$. The constant $C$ might change from line to line from now on. To see \eqref{eq:area_in_ball}, for each $x\in M$ and $r>0$ there is a radius $s\in[0,r]$ such that $|B_r(x)\setminus B_s(x)|=|B_r(x)\cap E|$. This implies that the set
    \[
    E'=(E\cup B_r(x))\setminus B_s(x)
    \]
    satisfies $|E'|=|E|$. On the other hand, we have
    \[P(E')\leq P(E,\overline{B}_r(x)^c)+P(B_r(x))+P(B_s(x))\leq P(E,\overline{B}_r(x)^c) + Cr^{n-1},
    \]
    and
    \[
    P(E')\geq P(E)\geq P(E,\overline{B}_r(x)^c)+P(E,B_r(x)).
    \]
    This proves \eqref{eq:area_in_ball}. By regularizing $\overline\eta:=\min_i\{\eta_i\}$, we can find a function $\eta\in C^\infty(M)$ such that
    \[\eta=0\ \text{on}\ \bigcup B(x_i,r_i),\qquad
        \eta=1\ \text{on}\ M\setminus\bigcup B(x_i,4r_i),\]
    and $|\D_M\eta|\leq2|\D_M\bar\eta|$. Combined with \eqref{eq:area_in_ball}, and $|\D_M\eta_i|\leq Cr_i^{-1}$, we obtain
    \begin{equation}\label{eq:total_grad}
        \begin{aligned}
            \int_{\preg E}|\D_{\preg E}\eta|^2 &\leq \int_{\preg E}|\D_M\eta|^2\leq2\sum_i\int_{\preg E\cap B(x_i,4r_i)}|\D_M\eta_i|^2 \\
            &\leq C\sum_i r_i^{n-1}\cdot r_i^{-2}\leq C\delta^4.
        \end{aligned}
    \end{equation}

    By taking the exponential normal variation, for $\varphi\in C^\infty(M)$, let us consider a smooth family of sets $\{E_t\}_{t\in(-\varepsilon,\varepsilon)}$, such that $E_0=E$, the variational vector field $X_t$ along $\p E_t$ at $t=0$ is $\varphi\eta\nu$ (where $\nu$ denotes the outer unit normal of $\p E_t$), and $\nabla_{X_t}X_t = 0$ at $t=0$. This family is well-defined since $\eta$ is supported inside $\preg E$. Doing the computations as in \textbf{Step 1}, and fixing $\varphi=1$, we get that $H=A'(v_0)$ is constant on $\preg E$, and
    \begin{equation}\label{eqn:VariationsVolumenew}
        \frac{\d V}{\d t}(0)=\int_{\preg E} \eta,\qquad \frac{\d ^2V}{\d t^2}(0)=\int_{\preg E}H\eta^2,
    \end{equation}\begin{equation}\label{eqn:FirstVariationOfAreanew}
        \frac {\d A}{\d t}(0)=\int_{\preg E} H\eta,
    \end{equation}
    \begin{equation}\label{eqn:SecondVariationnew}
        \begin{aligned}
            \frac {\d^2 A}{\d t^2}(0) &\leq \int_{\preg E} |\nabla_{\preg}\eta|^2 +\eta^2\left(-k+\frac{n-2}{n-1}H^2\right). \\
        \end{aligned}
    \end{equation}
    Thus, using \eqref{eqn:ChainRule}, arguing as in \textbf{Case 1}, calling $Q:=\int_{\preg E}\eta$, and finally using \eqref{eq:total_grad}, we get 
    \begin{equation}\label{eqn:LAFINE}
    \begin{aligned}
    Q^2\cdot A''(v_0) &\leq \int_{\preg E} |\nabla_{\preg}\eta|^2 +\eta^2\left(-k-\frac{1}{n-1}H^2\right)\\
    &\leq C\delta^4 + \left(-k-\frac{1}{n-1}A'(v_0)^2\right)\int_{\preg E}\eta^2
    \end{aligned}
    \end{equation}
    Now notice that $Q^{-2}\int_{\preg E}\eta^2 \geq P(E)^{-1}=A(v_0)^{-1}$ by H\"older inequality. Moreover, possibly taking a smaller $\delta$, we can choose $\eta$ such that $Q\geq \tilde C$, where $\tilde C$ possibly depends on $E$. Then \eqref{eqn:LAFINE} implies that (notice that the function $A$ also depends on $\delta$)
    \[
    A''(v_0)\leq o_{\delta\to 0}(1)+ \left(-k-\frac{1}{n-1}A'(v_0)^2\right)A(v_0)^{-1}.
    \]
{Thus we have that, for every $\varepsilon>0$, there exists an upper-touching barrier $A$ at $v_0$ such that
\[
-A''A \geq k+\frac{A^2}{n-1}-\varepsilon,
\]
holds at $v_0$, i.e., \eqref{eqn:Viscosity} holds in the barrier sense, as desired.}
\medskip

    \textbf{Case 3.} $M^n$ is non-compact. For simplicity, we will only deal with the case $k=0$. The detailed proof is a bit technical and involves the use of recent techniques from non-smooth analysis. We will give a fairly complete sketch, by using the result in \cref{MeanCurvatureMonotonicity}. The proof of such a result in an even more general setting is in \cite{ConcavitySharpAPPS}. Let us write down the idea, and postpone the proof.

    \textbf{Idea}. We have to overcome this situation: for some volume $v$ there are no isoperimetric sets with volume $v$ (see Lecture 3). In this case we can prove, see \cref{concentrationcompactness}, that there is a Ricci-limit space at infinity where one has an isoperimetric set with volume $v$. Then, we will perform the previous computations in a weak sense on this isoperimetric set to recover the result. We need to develop some other tools before giving a full proof, so we postpone it to Lecture 3.
\end{proof}
{
\begin{question}
    Is there a way to obtain the proof of \textbf{Case 3.} of \cref{thm:ConcavitySharp} without leveraging the theory of $\mathrm{RCD}$ spaces?
\end{question}
}
\begin{remark}[Equality in \eqref{eqn:Viscosity}]\label{rem:Equality}
    If $M$ is isometric to the simply connected model of dimension $n$, with constant sectional curvature such that $\mathrm{Ric} = k\cdot g$, then $I_M$ satisfies the equality in \eqref{eqn:Viscosity}. Let us outline a way to verify the previous assertion when $M$ is the round sphere of constant sectional curvature $=1$, and of dimension $n$ (hence $k=(n-1)$).

    The round sphere of dimension $n$ and constant sectional curvature $=1$ is isometric to the warped product
    \[
    ((0,\pi)\times \mathbb S^{n-1},\mathrm{d}r^2+\sin^2(r)g_{\mathbb {S}^{n-1}}),
    \]
    where $g_{\mathbb S^{n-1}}$ is the metric of constant sectional curvature $=1$ on $\mathbb S^{n-1}$. We stated before the (only) isoperimetric sets are (equivalent to) geodesic spheres. Hence $I_{\mathbb S^n}$ is implicitly defined on $(0,|\mathbb S^n|)$ by
    \begin{equation}\label{eqn:ImplicitISn}
    I_{\mathbb S^n}\left(|\mathbb S^{n-1}|\int_0^r \sin^{n-1}(s)\mathrm{d}s\right) = |\mathbb S^{n-1}|\sin^{n-1}(r), \qquad \forall r\in (0,\pi).
    \end{equation}
    Differentiating \eqref{eqn:ImplicitISn} we get \eqref{eqn:Viscosity} with $k=n-1$.

    {In general, in the non-compact case, equality in \eqref{eqn:Viscosity} does not force any rigidity on the space. For examples, for the manifolds constructed in \cite[Theorem 1.1]{AntonelliGlaudo}, $(I^{\frac{n}{n-1}})''(v)=0$ for every $v\in (0,1)$, thus reaching equality in \eqref{eqn:Viscosity} for $k=0$. Nevertheless, those examples are not flat, and not even cones. Anyway, rigidity sometimes holds in the non-compact case under additional assumptions, compare with \cite[Theorem 1.2(3)]{ConcavitySharpAPPS2}. 
    
    On the other hand, e.g., if on a compact manifold $M^n$ with $2\leq n\leq 7$ equality holds at $v\in (0,|M|)$ in \eqref{eqn:Viscosity}, then every isoperimetric set $E$ of volume $v$ has totally umbilical boundary, and $\mathrm{Ric}(\nu_{\partial E},\nu_{\partial E})=k$, where $\nu_{\partial E}$ is the outward unit normal to $E$: compare with \cite[Theorem 3.13]{Ritore2023}.}
\end{remark}

\subsection{Bishop--Gromov and L\'evy--Gromov inequalities}
We prove the following. The results below in \cref{LGBG} are well-known and due to, respectively, Gromov and Bishop. Up to our knowledge, the proof of \cref{LG} in \cref{LGBG} we propose below is due to Bayle \cite{Bayle04}, and the proof of \cref{BG} we propose below in \cref{LGBG} is due to Bray \cite{BrayPhD}.
\begin{corollary}\label{LGBG}
    Let $(M^n,g)$ be a Riemannian manifold such that $\mathrm{Ric}\geq (n-1)\cdot g$ as quadratic forms. Then the following hold:
    \begin{enumerate}
        \item\label{LG} (L\'evy--Gromov isoperimetric inequality). We have
        \begin{equation}\label{eqn:LG}
        \frac{I_M(t|M|)}{|M|}\geq \frac{I_{\mathbb S^n}(t|\mathbb S^n|)}{|\mathbb S^n|}, \qquad \forall t\in [0,1].
        \end{equation}
        If equality holds in \eqref{eqn:LG} at some $t\in (0,1)$, then $M$ is isometric to $\mathbb S^n$.
        \item\label{BG} (Bishop--Gromov inequality) We have 
        \[
        |M|\leq |\mathbb S^n|.
        \]
        If equality holds, then $M$ is isometric to $\mathbb S^n$.
    \end{enumerate}
\end{corollary}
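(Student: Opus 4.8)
The plan is to derive both statements as consequences of the sharp concavity inequality \eqref{eqn:Viscosity} of \cref{thm:ConcavitySharp}, applied with $k = n-1$, together with the explicit formula for the model profile $I_{\mathbb S^n}$ recorded in \cref{rem:Equality}. Write $h := I_{\mathbb S^n}$, which by \eqref{eqn:ImplicitISn} satisfies $-h'' h = (n-1) + (h')^2/(n-1)$ with equality, and is the profile of a space of total volume $|\mathbb S^n|$. The key point is that \eqref{eqn:Viscosity} is an ODE comparison: it should force $I_M$, suitably normalized, to lie below the model profile.

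For part \eqref{LG}, first I would rescale. Set $J(t) := I_M(t|M|)/|M|$ for $t \in (0,1)$; then $J$ is continuous on $(0,1)$, tends to $0$ at both endpoints, and a direct computation shows that $-J'' J \geq (n-1) + (J')^2/(n-1)$ holds in the viscosity sense as well (the volume scaling is chosen precisely so that this inequality is invariant — the $|M|$ factors cancel). The model function $j(t) := I_{\mathbb S^n}(t|\mathbb S^n|)/|\mathbb S^n|$ satisfies the same inequality with equality. Introducing $u := J^{n/(n-1)}$ and $u_0 := j^{n/(n-1)}$ turns the differential inequality into $u'' \leq -c\, u^{-1/(n-1)}\cdot(\text{something})$; more cleanly, one checks that \eqref{eqn:Viscosity} with $k=n-1$ is equivalent to a statement of the form $(\text{functional of }I)'' + (n-1)\cdot(\ldots) \le 0$. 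The cleanest route: both $J$ and $j$ solve, respectively a differential inequality and the corresponding equality for the same second-order operator; both vanish at $0$ and $1$; both are positive in between; so a maximum-principle / Sturm-comparison argument on the ratio or difference, using that at an interior minimum of $J - j$ (or $J/j$) the viscosity inequality for $J$ and the equation for $j$ are incompatible unless $J \geq j$ everywhere. This gives \eqref{eqn:LG}. The rigidity statement — equality at one interior $t$ forces $M \cong \mathbb S^n$ — would follow by running the comparison as an equality: equality at an interior point propagates (strong maximum principle) to $J \equiv j$, hence $I_M \equiv I_{\mathbb S^n}$, and then one invokes the characterization of the equality case (the isoperimetric set at that volume is a metric ball whose boundary is totally umbilical with $|\mathrm{II}|^2 = H^2/(n-1)$ and $\Ric(\nu,\nu) = n-1$ pointwise, which upgrades via the second-variation computation of \textbf{Case 1} to a splitting/warped-product rigidity forcing constant curvature $1$).

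For part \eqref{BG}, the statement $|M| \leq |\mathbb S^n|$ should come out of the same ODE analysis run near a single endpoint. Since $I_M$ is continuous, positive on $(0,|M|)$, vanishes (in the appropriate sense) at $0$, and satisfies $-I_M'' I_M \geq (n-1) + (I_M')^2/(n-1)$, comparing with the solution of the corresponding equality ODE with the same initial behavior at $0$ — the model profile of $\mathbb S^n$, which is defined on $(0,|\mathbb S^n|)$ and whose graph "closes up" exactly at volume $|\mathbb S^n|$ — shows that $I_M$ must hit $0$ no later than the model does; i.e. $|M| \le |\mathbb S^n|$. Concretely one can track the quantity $\arcsin$-type substitution: writing $I_M = c\sin^{n-1}(\theta(v))$-type comparison, the inequality forces $\theta$ to sweep through its full range $(0,\pi)$ over a $v$-interval of length at most $|\mathbb S^n|$. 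Equality $|M| = |\mathbb S^n|$ forces the comparison to be sharp throughout, hence $I_M = I_{\mathbb S^n}$, and rigidity follows as before. Alternatively, \eqref{BG} is immediate from \eqref{LG} by taking $t \to 1^-$: since $I_M(t|M|) \to 0$ and the inequality \eqref{eqn:LG} must be consistent, but the cleanest self-contained derivation is the direct ODE argument; I would present that.

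The main obstacle I anticipate is making the viscosity/barrier ODE comparison fully rigorous — one is comparing a merely continuous, locally semiconcave function satisfying a differential inequality only in the viscosity sense against an explicit smooth solution, across an interval where $I_M$ may fail to be $C^1$ at countably many points and where the coefficient of the inequality degenerates as $I_M \to 0$ at the endpoints. The endpoint degeneracy (both $I_M$ and the "drag" term behave like $v^{(n-1)/n}$ near $0$) is exactly what one needs for the comparison to have the right boundary behavior, but handling it requires either the distributional reformulation on $I_M^{n/(n-1)}$ mentioned in \cref{rem:RegularityIM} or a careful barrier argument; the semiconcavity from \cref{rem:RegularityIM} is what saves the day, since it lets one work with $I_M''$ as a locally bounded measure and integrate the inequality. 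The rigidity cases add a second layer of difficulty: propagating equality from one point and then translating the resulting pointwise geometric identities (umbilicity, Ricci saturation) into a global isometry with $\mathbb S^n$ — this is where one genuinely needs the rigidity in the second-variation inequality and a Reilly-type or warped-product argument, not just the ODE.
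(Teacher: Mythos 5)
Your treatment of part \eqref{LG} is essentially the paper's own: normalize both profiles by total volume, pass to the power $\frac{n}{n-1}$, and compare the viscosity differential inequality for the normalized $I_M$ with the equality satisfied by the model profile from \cref{rem:Equality} via a maximum-principle/ODE comparison (the paper defers exactly these details, and the rigidity, to \cref{ExODEDetails} and the reader). So for \eqref{LG} you are on the paper's route.

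For part \eqref{BG} there is a concrete gap. Set $\psi:=I_M^{\frac{n}{n-1}}$. The inequality $\psi''\leq -n\psi^{(2-n)/n}$ together with positivity of $\psi$ on $(0,|M|)$ does \emph{not} by itself force $|M|\leq|\mathbb S^n|$: for every $\zeta>0$ the rescaled model profile $I_\zeta$ of \eqref{eqn:DefnIzetaNew} satisfies the same ODE \emph{with equality}, is positive on $(0,V_\zeta)$, and vanishes only at $V_\zeta=\zeta|\mathbb S^n|/|\mathbb S^{n-1}|$, which is arbitrarily large for large $\zeta$. The vanishing time is governed by the initial slope $(\psi_\zeta)'_+(0)=n\zeta^{\frac{1}{n-1}}$, so your phrase ``the solution of the corresponding equality ODE with the same initial behavior at $0$'' hides precisely the missing input: one must \emph{prove} the upper bound $\psi'_+(0)\leq n^{\frac{n}{n-1}}\omega_n^{\frac{1}{n-1}}$, i.e.\ that for small $v$ the profile $I_M(v)$ is bounded above by the (asymptotically Euclidean) perimeter of a small geodesic ball of volume $v$. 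This small-ball comparison is the step the paper makes explicit; it pins the comparison to the profile with $\zeta=|\mathbb S^{n-1}|$, whose vanishing time is exactly $|\mathbb S^n|$, and then the contradiction argument (if $|M|>|\mathbb S^n|$, pick $\zeta'$ slightly larger with $V_{\zeta'}<|M|$ and strictly larger initial slope; ODE comparison gives $\psi\leq\psi_{\zeta'}$, hence $\psi(V_{\zeta'})\leq 0$, contradicting $I_M>0$ from \cref{rem:I>0}) goes through. Relatedly, your parenthetical shortcut ``\eqref{BG} is immediate from \eqref{LG} by taking $t\to 1^-$'' fails: both sides of \eqref{eqn:LG} tend to $0$ there and no information survives. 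The endpoint that does yield \eqref{BG} from \eqref{LG} is $t\to 0^+$, and again only after inserting the same small-ball upper bound on $I_M$ at small volumes; without that geometric input, no purely ODE-level or normalization argument can control $|M|$.
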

\begin{remark}[Bishop--Gromov monotonicity formula]
    \cref{BG} in \cref{LGBG} is a special case of a general monotonicity formula, due to Bishop--Gromov. We recall it here.
    
    Let $k\in\mathbb R$ and assume $(M^n,g)$ is a Riemannian manifold with $\mathrm{Ric}\geq k\cdot g$. Denote $V_{n,k}(r)$ the volume of any geodesic ball of radius $r$ in the simply connected space of dimension $n$, constant sectional curvature, and with $\mathrm{Ric}=k\cdot g$. Then, for every $p\in M$, 
    \begin{equation}\label{eqn:BishopGromov}
    \frac{|B_R(p)|}{V_{n,k}(R)}\leq \frac{|B_r(p)|}{V_{n,k}(r)}\leq 1 \qquad \forall 0<r<R.
    \end{equation}
    {Actually \eqref{eqn:BishopGromov} is just a particular case of a more general inequality: for every $0\leq r_1<r_2\leq r_3<r_4$ and every $p\in M$ we have,
    \begin{equation}\label{eqn:BG+}
    \frac{|B_{r_4}(p)|-|B_{r_3}(p)|}{V_{n,k}(r_4)-V_{n,k}(r_3)} \leq \frac{|B_{r_2}(p)|-|B_{r_1}(p)|}{V_{n,k}(r_2)-V_{n,k}(r_1)}.
    \end{equation}
    As a result of a limiting procedure involving \eqref{eqn:BG+}, the same monotonicity formula as in \eqref{eqn:BishopGromov} holds if one considers the perimeter of balls instead of the volume. 
    }

\end{remark}
\begin{proof}[Proof of \cref{LGBG}]
    Let us prove the two items independently. Notice that $\mathrm{Ric}\geq (n-1)\cdot g$ implies that $M$ is compact by the classical Bonnet--Myers theorem.
    \begin{enumerate}
        \item We only prove the inequality. Let us call $f(t):=\left(\frac{I_M(t|M|)}{|M|}\right)^{\frac{n}{n-1}}$, and $g(t):=\left(\frac{I_{\mathbb S^n}(t|\mathbb S^n|)}{|\mathbb S^n|}\right)^{\frac{n}{n-1}}$. By \cref{rem:Equality} and \eqref{eqn:Viscosity} we have 
        \[
        f''\leq -nf^{(2-n)/n}\,\, \text{in the viscosity sense on $(0,1)$}, \qquad g''=-ng^{(2-n)/n}\,\,\text{on $(0,1)$}.
        \]
        Hence ODE comparison (see \cref{ExODEDetails}) will give the desired inequality $f\geq g$. The equality case comes from a more refined analysis that we leave to the interested reader.
        \item We only prove the inequality. Set $\psi:=I^{\frac{n}{n-1}}$. A direct computation gives that, from \eqref{eqn:Viscosity}, we infer
    \begin{equation}\label{eqn:PsiZetaViscosity}
    \psi'' \leq -n\psi^{(2-n)/n} \qquad \text{in the viscosity sense on $(0,|M|)$.}
    \end{equation}
    For every $\zeta>0$, define $I_\zeta:(0,V_\zeta)\to\mathbb R$ implicitly by ($0<r<\pi$)
    \begin{equation}\label{eqn:DefnIzetaNew}
        I_\zeta\Big(\zeta\int_0^r\sin(s)^{n-1}\,\d s\Big)=\zeta\sin(r)^{n-1}.
    \end{equation}
    Notice $V_\zeta=\zeta\frac{|\mathbb S^n|}{|\mathbb S^{n-1}|}$, and $I_\zeta(0)=I_\zeta(V_\zeta)=0$. A direct computation shows that $\psi_\zeta:=I_\zeta^{\frac{n}{n-1}}$ satisfies 
    \begin{equation}\label{eqn:PsiZeta}
    \psi_\zeta'' = -n\psi_\zeta^{(2-n)/n} \qquad \text{on $(0,V_\zeta)$.}
    \end{equation}
    Notice that for small $v>0$, the function $I(v)$ is bounded above by the perimeter of a ball of volume $v$ centered at a point $x\in M$. Thus, denoting by $\psi'_+(0)$ the upper right-hand Dini derivative, we obtain
    \[
    \psi'_+(0)\leq n^{\frac{n}{n-1}}|\mathbb B^n|^{\frac1{n-1}},
    \]
    where $\mathbb B^n$ is the Euclidean unit ball in $\mathbb R^n$.
    Moreover, a direct computation gives 
    \[
    (\psi_\zeta)'_+(0)=n\zeta^{\frac1{n-1}}.
    \]
    Notice that when $\zeta=n|\mathbb B^n|=|\mathbb S^{n-1}|$, we have $V_{\zeta}=|\mathbb S^n|$.
    
    Suppose by contradiction that $|M|>|\mathbb S^n|$. Then there is $\zeta'>n|\mathbb B^n|$ such that $V_{\zeta'}<|M|$. Since $\zeta'>n|\mathbb B^n|$ we have 
    \begin{equation}\label{eqn:ConditionOnDerivative}
    +\infty>(\psi_{\zeta'})'_+(0) > \psi'_+(0).
    \end{equation}
    Taking into account \eqref{eqn:PsiZetaViscosity}, \eqref{eqn:PsiZeta}, and \eqref{eqn:ConditionOnDerivative} we can apply ODE comparison (see \cref{ExODEDetails}), and hence we get
    \[
    \psi(v)\leq \psi_{\zeta'}(v) \qquad \text{for all $v \leq V_{\zeta'}$}.
    \]
    The latter inequality, evaluated at $v=V_{\zeta'}$, gives $\psi(V_{\zeta'})\leq 0$, which is a contradiction with the fact that $I>0$ (see \cref{rem:I>0}) on $(0,V_{\zeta'})\subset (0,|M|)$. \qedhere
    \end{enumerate}
\end{proof}

{
\subsubsection{Localization technique and Lévy--Gromov isoperimetric inequality}\label{Localization}

Recently, in \cite{CM17}, the authors extended the Lévy--Gromov isoperimetric inequality to the setting of (essentially non-branching) $\mathrm{CD}(K,N)$ spaces, with $K\in\mathbb R$ and $1\leq N<\infty$. This has been done using the localization technique first worked out in the smooth weighted Riemannian setting by Klartag in \cite{Klartag}, and coming from ideas in convex geometry (see, e.g., \cite{PayneWeinberger}). Let us quickly describe the idea behind the proof. The localization technique will also play a key role in showing \eqref{eqn:Distributional} in \cref{MeanCurvatureMonotonicity}, which is instrumental in proving \cref{thm:ConcavitySharp} in the non-compact case. For what follows, the interested reader might consult \cite[Theorem 1.5]{Klartag}, and \cite[Theorem 5.1]{CM17}.
\medskip

Let us consider an essentialy non-branching $\mathrm{CD}(K,N)$ metric measure space $(X,\mathrm{d},\mathfrak{m})$ for some $K\in\mathbb R$ and $1\leq N<\infty$. Let $f:X\to\mathbb R$ be an $\mathfrak{m}$-integrable function with $\int_X f\mathrm{d}\mathfrak{m}=0$, and such that there is $x_0$ for which $\int_X f(x)\mathrm{d}(x,x_0)\mathrm{d}\mathfrak{m}<\infty$.
The function $f$ induces an $\mathfrak{m}$-almost everywhere partition of $X$ into: geodesics $X_{\alpha}$ indexed over a set $Q$, and a set $\mathcal{Z}$ on which $f=0$ $\mathfrak{m}$-a.e. 

The partition of $X$ into \emph{transport rays} $\sqcup X_\alpha=:\mathcal{T}$ and $\mathcal{Z}$ induced by the function $f$ determines the following disintegration formula: 
 \begin{equation}\label{eq:disintegration2}
\mathfrak{m}(B)=\int_{Q} h_{\alpha} \mathcal{H}^{1} \llcorner X_{\alpha}(B)\,  {\mathfrak{q}}(\mathrm{d} \alpha),\, \quad \forall B\subset\mathcal{T}.
 \end{equation}
 The probability measure $\mathfrak{q}$ in \eqref{eq:disintegration2}, defined on the set of indices $Q$, is obtained in a natural way from the essential partition $(X_{\alpha})_{\alpha \in Q}$ of $X$, roughly by projecting $\mathfrak{m}$  on the set $Q$ of equivalence classes (we refer to \cite{CM17, CM20} for the details). Moreover, we also have that for $\mathfrak{q}$-a.e. $\alpha\in Q$, $\int_{X_\alpha} f\,h_\alpha\mathcal{H}^1\llcorner X_\alpha = 0$

The key property is that, if $(X,\mathrm{d},\mathfrak{m})$ is an essentially non-branching $\mathrm{CD}(K,N)$ metric measure space, then each $h_{\alpha}$ is a $\mathrm{CD}(K,N)$ density over the ray $X_{\alpha}$ (see \cite[Theorem 3.6]{CM20}), i.e., if $N>1$
\begin{equation}\label{eq:localprelA}
(\log h_{\alpha})''\leq - K - \frac{1}{N-1} \big( (\log h_{\alpha})' \big)^{2},
\end{equation}
in the sense of distributions and point-wise except countably many points, compare with \cite[Lemma A.3, Lemma A.5, Proposition A.10]{CavallettiMilmanCD} (when $N=1$, $h_\alpha$ is constant). 
Equivalently, if $N>1$
\begin{equation}\label{eq:localprel}
\left(h_{\alpha}^{\frac{1}{N-1}}\right)''+\frac{K}{N-1}h_{\alpha}^{\frac{1}{N-1}}\leq 0\, ,
\end{equation}
in the sense of distributions. This amounts to say that the Curvature-Dimension condition of the ambient space $(X,\mathrm{d},\mathfrak{m})$ is inherited by the needles of the partition induced by $f$.
\medskip

Now, the Lévy--Gromov isoperimetric inequality \cref{LGBG} is obtained as in the following sketch, see \cite[Theorem 6.4]{CM17} for the more general statement in the category of $\mathrm{CD}$ spaces. Assume $\mathrm{Ric}\geq (n-1)g$ on $M$, and fix $t\in (0,1)$. For every Borel set $E$ with $|E|=t|M|$, consider $f:=\chi_E - t|M|$, and notice that $\int_M f\mathrm{d}\mathrm{vol}=0$. Using the disintegration of the measure as above (with guiding function $f$), and recalling that $\mathcal{M}_{(X,\mathrm{d},\mathfrak{m})}$ is the Minkowski content of the metric measure space $(X,\mathrm{d},\mathfrak{m})$, and $A_\varepsilon:=\{x:\mathrm{d}(x,A)\leq \varepsilon\}$, we get
\begin{equation}\label{eqn:BoundMinLG}
\mathcal{M}_{(M^n,\mathrm{d},|\cdot|)}(E)=\liminf_{\varepsilon\to 0}\frac{|E_\varepsilon|-|E|}{\varepsilon} \geq \int_Q \mathcal{M}_{(X_\alpha,\mathrm{d},h_\alpha\mathcal{H}^1)}(A_\alpha)\mathfrak{q}(\mathrm{d}\alpha),
\end{equation}
where $A_\alpha:=X_\alpha\cap A$, and $(h_\alpha\mathcal{H}^1)(A_\alpha)=t|M|$. Finally, since $(X_\alpha,\mathrm{d},h_\alpha\mathcal{H}^1)$ is a one-dimensional manifold where $h_\alpha$ satisfies \eqref{eq:localprel} with $K=n-1$, and $N=n$ (which means that it satisfies the weak curvature dimension condition $\mathrm{CD}(n-1,n)$), we can use Milman's result in \cite[Theorem 1.2]{MilmanJEMS} to bound $\mathcal{M}_{(X_\alpha,\mathrm{d},h_\alpha\mathcal{H}^1)}(A_\alpha)$ from below with the value in the model space and eventually conclude the proof.  

Finally, to pass from the Minkowski content to the perimeter in \eqref{eqn:BoundMinLG}, we need to use the fact that the lower semicontinuous relaxation of $\mathcal{M}(\cdot)$ with respect to the convergence in measure (on finite measure sets) is the perimeter, see again \cite{ADMG17}. Thus from \eqref{eqn:BoundMinLG} we get 
\[
I(t|E|)\geq P(E)\geq \frac{|M|}{|\mathbb S^n|}I_{\mathbb S^n}(t|\mathbb S^n|),
\]
as desired.
}

\subsection{Connections to the stable Bernstein problem}

Using an unequally weighted isoperimetric profile, and carefully reworking the proof of \cref{BG} in \cref{LGBG}, one can prove the following stronger version of \cref{BG} in \cref{LGBG}. Denote $\mathrm{Ric}(x)$ the smallest eigenvalue of the Ricci tensor at $x$.
\begin{theorem}[A--X \cite{AX24}]\label{spectralBG}
    Let $(M^n,g)$ be a compact Riemannian manifold with $n\geq 3$. Assume that 
    \begin{equation}\label{eqn:SpectralBG}
    \int_M \left(\frac{n-1}{n-2}|\nabla\varphi|^2 + \varphi^2\mathrm{Ric}\right)\mathrm{d}\mathcal{H}^n \geq (n-1)\int_M \varphi^2\mathrm{d}\mathcal{H}^n, \qquad \forall \varphi\in C^\infty(M).
    \end{equation}
    Then 
    \[
    |M|\leq |\mathbb S^n|,
    \]
    and if equality holds, then $M$ is isometric to $\mathbb S^n$.
\end{theorem}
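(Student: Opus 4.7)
The plan is to replicate Bray's ODE-comparison proof of \cref{BG} in \cref{LGBG}, with the pointwise Ricci lower bound replaced by the spectral hypothesis \eqref{eqn:SpectralBG}. The concrete target is to re-derive the viscosity concavity inequality
\[
-I_M''\,I_M \geq (n-1) + \frac{(I_M')^2}{n-1} \qquad \text{on }(0,|M|),
\]
after which the sphere-comparison argument of \cref{BG} runs unchanged. Fix $v_0 \in (0,|M|)$ and pick an isoperimetric region $E$ of volume $v_0$, which exists by \cref{lem:Existence}; by \cref{thm:Regularity} its boundary $\partial E$ is smooth off a set of codimension at least $8$, and the cutoff trick of Case 2 in the proof of \cref{thm:ConcavitySharp} lets us ignore that singular part.

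The heart of the argument is to test the second variation of area not with $\varphi \equiv 1$ (which would need a pointwise Ricci bound) but with $\varphi = u|_{\partial E}$ for a cleverly chosen $u \in C^\infty(M)$. After inserting the Newton inequality $|\mathrm{II}|^2 \geq H^2/(n-1)$, the boundary integrand
\[
\int_{\partial E}\!\left(|\nabla^\Sigma \varphi|^2 - \mathrm{Ric}(\nu,\nu)\varphi^2\right)
\]
should be transformed, via $|\nabla^\Sigma u|^2 = |\nabla_M u|^2 - (\partial_\nu u)^2$ together with the Gauss--Green identity $\int_{\partial E} u\,\partial_\nu u = \int_E (u\Delta u + |\nabla u|^2)$, into a global quadratic form in $u$ on $M$ that is controlled by the hypothesis \eqref{eqn:SpectralBG}. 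The constant $\frac{n-1}{n-2}$ should then fall out of completing a square designed to absorb the cross term between $H$ and $\partial_\nu u$ left over after the Newton inequality. The natural choice of $u$ is the minimiser of the Rayleigh quotient of $-\frac{n-1}{n-2}\Delta_M + \mathrm{Ric}$ subject to a constraint reading the geometry of $\partial E$; alternatively one might hope to use a $1$-Lipschitz extension produced by Klartag-type localisation as in \cref{Localization}.

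Once the viscosity inequality above is in hand, the proof of \cref{BG} applies verbatim: set $\psi := I_M^{n/(n-1)}$, verify the Dini-derivative estimate $\psi'_+(0) \leq n^{n/(n-1)}|\mathbb{B}^n|^{1/(n-1)}$ from small-ball Euclidean asymptotics, compare with the sphere family $\psi_\zeta$ defined in \eqref{eqn:DefnIzetaNew}, and, assuming for contradiction $|M| > |\mathbb{S}^n|$, extract the usual contradiction $\psi(V_{\zeta'}) \leq 0$ with $I_M > 0$. Rigidity is obtained by propagating equality backward: it forces saturation of Newton's inequality (so $\partial E$ is totally umbilical at every volume) and equality in \eqref{eqn:SpectralBG}, from which one recovers the isometry with $\mathbb{S}^n$ by a Cheng-type argument. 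The chief obstacle is the second step: engineering the extension $u$ so that a \emph{global} spectral quadratic form suffices to bound a \emph{boundary} second-variation integral with exactly the constant $\frac{n-1}{n-2}$ — each ingredient (Newton, Gauss--Green, the spectral assumption) is standard, but making all the constants line up sharply is the delicate part.
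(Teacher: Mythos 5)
Your plan diverges from the paper at the central step, and the divergence hides a real gap. You propose to keep the \emph{unweighted} isoperimetric profile $I_M$ and derive the concavity inequality $-I_M''I_M\geq (n-1)+\tfrac{(I_M')^2}{n-1}$ by testing the second variation with $\varphi=u|_{\partial E}$ and then ``Gauss--Greening'' the resulting boundary integral into a bulk quadratic form that \eqref{eqn:SpectralBG} controls. This does not go through as sketched. The identity $\int_{\partial E}u\,\partial_\nu u=\int_E(u\Delta u+|\nabla u|^2)$ trades \emph{one particular} boundary pairing for a bulk quantity, but it does not touch the terms you actually need to bound, namely $\int_{\partial E}|\nabla^\Sigma u|^2$ and $\int_{\partial E}\mathrm{Ric}(\nu,\nu)u^2$: there is no integration-by-parts that converts an $L^2(\partial E)$ quadratic form involving the tangential gradient and the ambient Ricci restricted to $\partial E$ into an $L^2(M)$ quadratic form of the type appearing in \eqref{eqn:SpectralBG}. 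The domains of integration live in different dimensions, and a spectral bound on $M$ simply does not pass to a boundary quadratic form with matching constants. You flagged this yourself as the ``chief obstacle'' — but it is not a matter of making constants line up; the mechanism you propose is missing a step. Moreover, even if one could establish the concavity inequality for the unweighted $I_M$, nothing in \eqref{eqn:SpectralBG} suggests it is true (for instance, the near-counterexamples on $\mathbb S^1\times\mathbb S^{n-1}$ with $\gamma$ slightly above $\tfrac{n-1}{n-2}$ cast doubt on this).

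The paper resolves the obstacle by moving the spectral information into a \emph{weight} rather than into the test function. From \eqref{eqn:SpectralBG} one extracts a positive ground-state-type function $u$ (normalized so $\min u=1$) satisfying the \emph{pointwise} inequality
\[
-\tfrac{n-1}{n-2}\Delta u+\mathrm{Ric}\cdot u\geq (n-1)u,
\]
and then replaces $I_M$ by the weighted profile
\[
I_M(v):=\inf\Bigl\{\int_{\partial^*E}u^{\frac{n-1}{n-2}}\,:\,\int_E u^{\frac{2}{n-2}}=v\Bigr\}.
\]
The exponents $\tfrac{n-1}{n-2}$ and $\tfrac{2}{n-2}$ are tuned exactly so that the second-variation computation of Cases~1--2 of \cref{thm:ConcavitySharp}, now run with the \emph{constant} test function $\varphi\equiv 1$, closes: the pointwise differential inequality for $u$ plays the role that $\mathrm{Ric}\geq(n-1)$ played before, and the weighted profile satisfies \eqref{TheEquation}. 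The ODE comparison then gives $\int_M u^{2/(n-2)}\leq |\mathbb S^n|$, and since $u\geq 1$ one concludes $|M|\leq |\mathbb S^n|$. Your ODE-comparison and rigidity endgame is in the right spirit, but the rigidity is also cleaner in the paper's framework: equality forces $u\equiv 1$, which collapses the pointwise inequality to $\mathrm{Ric}\geq n-1$, whence classical Bishop--Gromov rigidity. So the fix you need is a change of functional, not a change of test function: replace ``test with $\varphi=u|_{\partial E}$'' by ``weight the perimeter and volume by $u^{(n-1)/(n-2)}$ and $u^{2/(n-2)}$ and test with $\varphi\equiv 1$.''
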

\begin{proof}
    Let us sketch the proof. From the assumption\footnote{Notice: the assumption can be rewritten as: $\lambda_1(-\frac{n-1}{n-2}\Delta+\mathrm{Ric})\geq n-1$, where $\lambda_1(\cdot)$ denotes the least eigenvalue of the operator $\cdot$.}, regularizing $\mathrm{Ric}$, and using classical PDE analysis, one gets that there is a function $u>0$\footnote{Actually $u\in C^{2,\alpha}$ for every $0<\alpha<1$.} with $\min u=1$ such that 
    \begin{equation}\label{eqn:ReducesTo}
    -\frac{n-1}{n-2}\Delta u + \mathrm{Ric}\cdot u \geq (n-1)u.
    \end{equation}
    Then, we define $I_M:(0,\int_M u^{\frac{2}{n-2}})\to \mathbb R$ to be
    \[
    I_M(v):=\inf\left\{\int_{\p^* E}u^{\frac{n-1}{n-2}}: \text{$E$ has finite perimeter, and} \int_E u^{\frac{2}{n-2}}=v\right\}.
    \]
    It can be proved, by carefully adjusting \textbf{Case 1}, and \textbf{Case 2} in the proof of \cref{thm:ConcavitySharp}, that 
    \begin{equation}\label{TheEquation}
    -I_M''I_M \geq (n-1)+\frac{(I_M')^2}{n-1},
    \end{equation}
    holds in the viscosity sense on the domain of $I_M$.
    Thus, by ODE comparison with the isoperimetric profile of the round sphere (which satisfies equality in \eqref{TheEquation}, see \cref{rem:Equality}) as in the proof of \cref{BG} in \cref{LGBG}, and by using $u\geq 1$, we get
    \[
    |M|\leq \int_M u ^{\frac{2}{n-2}} \leq |\mathbb S^n|.
    \]
    If equality holds then $u\equiv 1$, and thus \eqref{eqn:ReducesTo} reduces to $\mathrm{Ric}\geq n-1$. Hence, the rigidity is concluded by the rigidity in the classical Bishop--Gromov inequality.
\end{proof}
\begin{remark}[Sharpness of \cref{spectralBG}]
    For every $n\geq 3$, every $\gamma>\frac{n-1}{n-2}$, and every $V>0$, there exists a metric on $\mathbb S^1\times\mathbb S^{n-1}$ such that \eqref{eqn:SpectralBG} holds with $\gamma$ instead of $\frac{n-1}{n-2}$, and $|M|>V$ (A--X \cite{AX24}). Thus the constant $\frac{n-1}{n-2}$ is sharp in \cref{spectralBG}.
\end{remark}

The result in \cref{spectralBG} was first proved when $n=3$ in \cite{CLMS}, and has played a key role in the solution of the stable Bernstein problem in $\mathbb R^5$ and $\mathbb R^6$ (Chodosh--Li--Minter--Stryker \cite{CLMS}, and Mazet \cite{Mazet}).

\begin{theorem}[Fischer-Colbrie--Schoen \cite{FischerColbrieSchoenCPAM}, Do Carmo--Peng \cite{DoCarmoPeng79}, Pogorelov \cite{Pogorelov81}; Chodosh--Li \cite{ChodoshLi4, ChodoshLiAlternative4}, Catino--Mastrolia--Roncoroni \cite{CatinoMastroliaRoncoroni}; Chodosh--Li--Stryker--Minter \cite{CLMS}; Mazet \cite{Mazet}]\label{StableBernstein}
    Let $M^n\hookrightarrow \mathbb R^{n+1}$ be an oriented, complete, connected, two-sided stable minimal hypersurface. If $n\leq 5$, then $M$ is a Euclidean hyperplane.
\end{theorem}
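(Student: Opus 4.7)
The plan is to case-split on dimension; the cases $n\leq 3$ are handled by the cited classical and recent results, while for $n\in\{4,5\}$ the strategy is to reduce to the compact spectral Bishop--Gromov inequality \cref{spectralBG} applied to a suitable hypersurface inside $M$ of one lower dimension.

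For $n=1$ the statement is immediate; a complete stable minimal curve in $\mathbb R^2$ is a straight line. For $n=2$ I would follow Fischer-Colbrie--Schoen: stability produces $u>0$ on $M$ solving $\Delta u+|A|^2 u=0$, and the conformally related metric $u^2 g$ then has vanishing Gauss curvature and is complete, forcing $M$ to be a plane. For $n=3$ I would invoke either Chodosh--Li's $\mu$-bubble argument or the conformal PDE approach of Catino--Mastrolia--Roncoroni.

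For $n\in\{4,5\}$ I would argue by contradiction, assuming $|A|\not\equiv 0$. Inside $M^n$ I would construct, following Chodosh--Li--Minter--Stryker (for $n=4$) and Mazet (for $n=5$), a smooth compact stable $\mu$-bubble $\Sigma^{n-1}\subset M$, i.e.\ a minimizer of a prescribed-mean-curvature / Allen--Cahn functional with a potential $\mu$ tuned along a proper exhaustion of $M$ so as to make $\Sigma$ arbitrarily large in intrinsic $(n-1)$-volume as the exhaustion grows. The crux of the argument is a double Gauss computation combining (i) stability of $M$ in $\mathbb R^{n+1}$, (ii) stability of $\Sigma$ as a $\mu$-bubble inside $M$, and (iii) the Gauss equations for $M^n\subset \mathbb R^{n+1}$ and $\Sigma^{n-1}\subset M^n$, which ought to yield for every $\varphi\in C^\infty(\Sigma)$ the spectral inequality
\[
\int_\Sigma \Big(\tfrac{n-2}{n-3}|\nabla\varphi|^2+\mathrm{Ric}_\Sigma\varphi^2\Big)\;\geq\;(n-2)\int_\Sigma \varphi^2,
\]
i.e.\ exactly the hypothesis~\eqref{eqn:SpectralBG} of \cref{spectralBG} on the closed $(n-1)$-manifold $\Sigma$. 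Invoking that theorem forces $|\Sigma^{n-1}|\leq|\mathbb S^{n-1}|$, contradicting the construction and thereby forcing $|A|\equiv 0$, so that $M$ is a hyperplane.

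The main obstacle is the assembly of the spectral inequality on $\Sigma$ with the sharp constant $\tfrac{n-2}{n-3}$: the two Gauss equations produce several cross terms involving the second fundamental forms of $M\subset\mathbb R^{n+1}$ and $\Sigma\subset M$, and these must be absorbed in a delicate manner by the two stability inequalities and by the choice of potential $\mu$. Precisely this absorption is what forces the dimension restrictions $n\leq 5$ and what distinguishes the arguments of Chodosh--Li--Minter--Stryker ($n=4$) and Mazet ($n=5$).
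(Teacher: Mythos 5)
Your proposal correctly identifies two of the main ingredients in the $n\in\{4,5\}$ cases -- the $\mu$-bubble technique and the compact spectral Bishop--Gromov inequality \cref{spectralBG}, with the correct threshold $\gamma'=\tfrac{n-2}{n-3}$ on the $(n-1)$-manifold $\Sigma$ -- and the treatment of $n\leq 3$ is fine. However, there is a genuine gap in the structure of the argument for $n\in\{4,5\}$, not merely a matter of missing details.

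First, the argument in \cite{CLMS, Mazet}, as sketched after the statement of \cref{StableBernstein}, is carried out not in the induced metric $g$ but in the Gulliver--Lawson conformal metric $\tilde g = r^{-2}g$, where $r$ is the extrinsic Euclidean distance to a base point. This conformal change is what converts the stability inequality of $M\subset\mathbb{R}^{n+1}$ into a spectral lower bound of the form $-\gamma\Delta_{(M,\tilde g)}+\mathrm{biRic}^\alpha_{(M,\tilde g)}\geq\delta>0$ for suitable $\gamma,\alpha,\delta$. The biRicci quantity (not the ordinary Ricci of $M$) is the quantity one controls, and the constants $\gamma,\alpha$ that optimize this step are precisely what decide whether one can later land on $\gamma'\leq\tfrac{n-2}{n-3}$ for $\Sigma$. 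Working directly in $g$, as you propose, one has $\mathrm{Ric}_M(\nu,\nu)$ of the order $-|A|^2$, and the two stability inequalities plus Gauss do not absorb this without the conformal change; your ``double Gauss computation'' would therefore not produce \eqref{eqn:SpectralBG} on $\Sigma$ with the stated constant.

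Second, and more seriously, the logical role you assign to the volume bound on $\Sigma$ is reversed. You argue by contradiction that one may tune the $\mu$-bubble to make $|\Sigma|$ arbitrarily large and then use \cref{spectralBG} to cap it. There is no reason the $\mu$-bubble can be made arbitrarily large when $|A|\not\equiv 0$; this is simply not how the construction works, and the contradiction does not materialize. In the actual argument the volume bound $|\Sigma|_{\tilde g}\leq C(\gamma',\varepsilon,n)$ (note: not $|\mathbb S^{n-1}|$ -- the spectral condition \eqref{SpectralRicci} only has $\varepsilon>0$ on the right, so one first rescales) is used \emph{constructively}: transporting it back to the original metric gives, for every $r>0$, a domain $\Omega\supset B_r^g(0)$ with $|\partial\Omega|_g\leq C r^{n-1}$; then the Michael--Simon isoperimetric inequality yields Euclidean volume growth $|B_r^g(0)|_g\leq Cr^n$; and finally the Schoen--Simon--Yau theorem converts Euclidean volume growth plus stability into flatness. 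These last two steps -- Michael--Simon and Schoen--Simon--Yau -- do not appear in your sketch but are essential, and the dimensional restriction $n\leq 5$ also enters through the applicability of Schoen--Simon--Yau, not only through the spectral constant.

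In summary: you have the right components ($\mu$-bubble, \cref{spectralBG}, the constant $\tfrac{n-2}{n-3}$), but without the Gulliver--Lawson conformal change and the biRicci spectral condition the inequality on $\Sigma$ cannot be produced, and the intended contradiction (``$\Sigma$ is forced to be large'') is not how the volume bound is used; you need to pass through Michael--Simon and Schoen--Simon--Yau instead.
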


Let us roughly describe (one of the approaches to) the recent proofs of \cref{StableBernstein} when $n\in \{3,4,5\}$. For a direct approach, which does not involve directly biRicci curvature, the reader might also read the computations \cite{AntonelliXuNote} on my (and Kai Xu's) webpage. First, given $\alpha>0$, and $u,v\in T_pM$, let us define 
\[
\mathrm{biRic}_p^{\alpha}(u,v):= \mathrm{Ric}_p(u,u)+\alpha\mathrm{Ric}_p(v,v)-\alpha\mathrm{Sect}_p(u\wedge v),
\]
and 
\[
\mathrm{biRic}^\alpha(p) := \min_{\text{$u,v\in T_pM$ orthonormal}}\mathrm{biRic}_p^{\alpha}(u,v).
\]
The proof (roughly) goes as follows. There is some topology to be taken care of, but we will be sketchy for the sake of readability.
\begin{enumerate}
    \item Without loss of generality, let $\mathbb R^n \ni 0 \in M$. Let $r$ denote the Euclidean distance from $0$. The Gulliver--Lawson conformal change $\tilde g:=r^{-2}g$ satisfies the spectral condition:
    \begin{equation}\label{eqn:SpectralAlphaBiric}
    -\gamma\Delta_{(M,\tilde g)} + \mathrm{biRic}^{\alpha}_{(M,\tilde g)} \geq \delta>0,
    \end{equation}
    for some carefully chosen $\gamma,\alpha,\delta \in (0,+\infty)$.
    \item The control \eqref{eqn:SpectralAlphaBiric} has the following consequence, in the metric $\tilde g$. There exist $\gamma',\varepsilon,C\in (0,\infty)$ for which we have the following. 
    
    For every $\rho>0$ there exists a compact hypersurface $\Sigma^{n-1}\subset\subset M\setminus B^{g}_\rho(0)$ such that $d_{\tilde g}(\partial B^g_\rho(0),\Sigma)\leq C$ and
    \begin{equation}\label{SpectralRicci}
    -\gamma'\Delta_{(\Sigma,\tilde g)}+\mathrm{Ric}_{(\mathrm{\Sigma},\tilde g)}\geq \varepsilon>0.
    \end{equation}
    This $\Sigma$ is produced by taking a carefully chosen $\mu$-bubble, i.e., a minimizer of the energy, computed with respect to $\tilde g$ (in a properly chosen annulus outside $B_\rho^{g}(0)$)
    \[
    E(\Omega):=\int_{\partial\Omega} u^{\gamma}-\int hu^{\gamma},
    \]
    for a properly chosen function $h$, and where $u>0$ is a smooth function satisfying $-\gamma\Delta_{(M,\tilde g)}u+\mathrm{biRic}^{\alpha}_{(M,\tilde g)}u-\delta u \geq 0$ (which exists by \eqref{eqn:SpectralAlphaBiric}). 

    The game is to get the smallest possible $\gamma'$ for which there are positive $\varepsilon,C>0$ such that one can ensure \eqref{SpectralRicci}. This reduces at finding the best spectral $\alpha$-biRicci condition on the Gulliver--Lawson conformal change \eqref{eqn:SpectralAlphaBiric}.
    \item If we succeeded in obtaining $\gamma'\leq \frac{n-2}{n-3}$, we can apply \cref{spectralBG} and infer that $|\tilde\Sigma^{n-1}|_{\tilde g}\leq C(\gamma',\varepsilon,n)$ for every connected component $\tilde\Sigma$ of $\Sigma$. 
    \item Going back to the original metric, and up to possibly passing to the universal cover, one has the following. For every $r>0$ there exists $\Omega^n\supset B_r^g(0)$ such that $\partial\Omega^n$ is connected, $d_g(\partial\Omega^n,\partial B_r^g(0))\leq Cr$ and
    \[
    |\partial\Omega^n|_g\leq C r^{n-1},
    \]
    for a universal $C$. Then Michael--Simon's isoperimetric inequality on $M$ (\cite{MichaelSimon}, \cite{BrendleIsop}) gives that $|B_r^g(0)|_g\leq Cr^n$ for every $r>0$. Hence $(M,g)$ has intrinsic Euclidean volume growth and the work of Schoen--Simon--Yau \cite{SSY} implies that $M$ is flat.
\end{enumerate}
\begin{remark}[Limitation of the strategy to solve the stable Bernstein problem]
    Unfortunately, it can be proved that, when $n=6$, the smallest $\gamma'$ one can get with the previous approach is $>4/3$. See, e.g., \cite{AntonelliXuNote}, and \cite{TamNote}. Hence, the strategy has a limitation in proving that an oriented, connected, complete, two-sided, stable minimal hypersurface $M^6\hookrightarrow \mathbb R^7$ has Euclidean volume growth. Another complication is that Schoen--Simon--Yau's result is not available for stable minimal hypersurfaces in $\mathbb R^7$. Anyway, recently Bellettini \cite{Bellettini} was able to carry out Schoen--Simon--Yau's result for (\textit{properly immersed}) stable minimal hypersurfaces in $\mathbb R^7$ having \textit{extrinsic} Euclidean volume growth.

    $n=6$ is the only remaining case where the stable Bernstein problem is not settled. Indeed, as a consequence of Bombieri--De Giorgi--Giusti's work \cite{BDG69}, we know that there exists a connected, complete, two-sided, stable minimal hypersurface $M^7\hookrightarrow \mathbb R^8$ that is not a Euclidean hyperplane.
\end{remark}

\begin{question}
    Let $M^6\hookrightarrow \mathbb R^7$ be a connected, complete, two-sided, stable minimal hypersurface. Understand whether $M$ must be a Euclidean hyperplane.
\end{question}

\newpage

\section{Third Lecture}

\subsection{Gromov--Hausdorff convergence} Gromov in the '80s, while proving that a finitely generated group has polynomial growth if and only if it is virtually nilpotent \cite{GromovPolynomial}, introduced a flexible notion of convergence of metric spaces. There are several ways one can introduce pGH (pointed Gromov--Hausdorff) convergence, or pmGH (pointed measured Gromov--Hausdorff) convergence (a variant in which also the measure converges). We refer to the book Burago--Burago--Ivanov \cite{BuragoBuragoIvanovBook} or to the work by Gigli--Mondino--Savaré \cite{GigliMondinoSavare15}. We adopt an extrinsic point of view. All metric spaces are complete and separable.

\begin{definition}[pGH and pmGH convergence]\label{def:GHconvergence}
A sequence $\{ (X_i, \mathrm{d}_i, x_i) \}_{i\in \mathbb N}$ \emph{pGH-converges} to a pointed metric space $ (Y, \mathrm{d}_Y, y)$ if there exist a complete separable metric space $(Z, \mathrm{d}_Z)$ and isometric embeddings
\[
\begin{split}
&\Psi_i:(X_i, \mathrm{d}_i) \to (Z,\mathrm{d}_Z), \\
&\Psi:(Y, \mathrm{d}_Y) \to (Z,\mathrm{d}_Z),
\end{split}
\]
such that for any $\varepsilon,R>0$ there is $i_0(\varepsilon,R)\in\mathbb N$ such that
\[
\Psi_i(B_R^{X_i}(x_i)) \subset \left[ \Psi(B_R^Y(y))\right]_\varepsilon,
\qquad
\Psi(B_R^{Y}(y)) \subset \left[ \Psi_i(B_R^{X_i}(x_i))\right]_\varepsilon,
\]
for any $i\geq i_0$, where $[A]_\varepsilon:= \{ z\in Z : \mathrm{d}_Z(z,A)\leq \varepsilon\}$ for any $A \subset Z$.

Let $\mathfrak{m}_i$ and $\mu$ be Radon measures on $X_i,Y$. If in addition we also have $(\Psi_i)_\sharp\mathfrak{m}_i \rightharpoonup \Psi_\sharp \mu$ with respect to duality with continuous bounded functions with bounded support on $Z$, then the convergence is said to hold in the \emph{$\mathrm{pmGH}$ sense}.
\end{definition}

A metric space $(X,\mathrm{d})$ is said to be \emph{locally doubling} if, for every $R>0$, there is $C(R)$ such that 
\begin{equation}\label{eqn:LocallyDoubling}
\text{For all $x\in X$, and $r<R$, $B_{2r}(x)$ can be covered by $\leq C(R)$ balls of radius $r$.}
\end{equation}

\begin{theorem}[Gromov]
    Let $\mathcal{F}$ be a class of metric spaces that is uniformly locally doubling. Namely, for every $R>0$, there exists $C(R)$ such that \eqref{eqn:LocallyDoubling} holds for every $(X,\mathrm{d})\in\mathcal{F}$. Then, any sequence $(X_i,\mathrm{d_i},x_i)$, where $(X_i,\mathrm{d}_i)\in\mathcal{F}$, and $x_i\in X_i$, is pre-compact in the pGH topology.

    In particular\footnote{As a consequence of \cref{RicLocDoub}.}, given $N\in\mathbb N$, and $k\in\mathbb R$, the class $\{(M^n,g,p):n\leq N, \mathrm{Ric}\geq k\cdot g, p\in M\}$ is precompact in the pGH-topology. Any limit point is called \emph{Ricci-limit space}.
\end{theorem}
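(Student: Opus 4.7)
The plan is to run a classical Arzelà--Ascoli-type diagonal argument on distance matrices of uniformly bounded nets. First, I would convert uniform local doubling into uniform total boundedness of every ball $\bar B_R^{X_i}(x_i)$; then I would extract, by a diagonal procedure, a subsequence along which certain nested nets become combinatorially isomorphic and their distance matrices converge; from these limits I would build the candidate limit space; finally I would realise the convergence in the extrinsic form demanded by \cref{def:GHconvergence}.

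More concretely, the first step iterates \eqref{eqn:LocallyDoubling} roughly $\lceil \log_2(R/\varepsilon) \rceil$ times to produce, for each $R$ and $\varepsilon > 0$, a bound $N(R,\varepsilon)$ independent of $i$ on the cardinality of any minimal $\varepsilon$-net inside $\bar B_R^{X_i}(x_i)$. For each $k \in \mathbb{N}$ I would pick in every $X_i$ a $\tfrac{1}{k}$-net $S_i^k = \{p_1^{k,i},\dots,p_{m_i^k}^{k,i}\} \subset \bar B_k^{X_i}(x_i)$ containing $x_i$, chosen nested so that $S_i^k \subset S_i^{k+1}$. Since the cardinalities are bounded by $N(k,1/k)$ and the pairwise distances lie in the compact interval $[0,2k]$, a diagonal argument over $k$ and over the finitely many pairs inside each $S_i^k$ produces a subsequence along which every $m_i^k$ stabilises to some $m^k$, and along which every number $\mathrm{d}_i(p_a^{k,i},p_b^{k,i})$ converges to a limit $D_{ab}^k$.

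The limits $D_{ab}^k$ define a pseudo-metric on the countable set obtained by gluing the indexing sets $\{1,\ldots,m^k\}$ along the nesting; after quotienting by zero distance and completing I obtain a pointed complete separable metric space $(Y,\mathrm{d}_Y,y)$, with $y$ the class of $x_i$. To match \cref{def:GHconvergence}, I would take $Z := Y \sqcup \bigsqcup_i X_i$, let $\mathrm{d}_Z$ coincide with $\mathrm{d}_Y$ on $Y$ and with $\mathrm{d}_i$ on each $X_i$, and glue via
\[
\mathrm{d}_Z(q,x) := \inf_{a \leq m^{k(i)}} \bigl( \mathrm{d}_Y(q,[p_a^{k(i),i}]) + \tfrac{1}{k(i)} + \mathrm{d}_i(p_a^{k(i),i},x) \bigr),\qquad q \in Y,\ x \in X_i,
\]
with a symmetric rule between $X_i$ and $X_j$, for a sequence $k(i) \to \infty$ chosen slowly enough. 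The two $\varepsilon$-inclusions of $R$-balls required by \cref{def:GHconvergence} then follow because each point of $B_R^{X_i}(x_i)$ is within $1/k(i)$ of a net point that is $Z$-close to its limit in $Y$, and conversely every point of a fixed ball in $Y$ is approximable by net points $[p_a^{k,i}]$ coming from actual $p_a^{k,i} \in X_i$.

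For the ``in particular'' assertion I would invoke Bishop--Gromov \eqref{eqn:BishopGromov}: a standard volume-packing argument shows that on any $(M^n,g)$ with $n \leq N$ and $\mathrm{Ric} \geq k \cdot g$ one may cover $B_{2r}(x)$ by at most $V_{N,k}(2r)/V_{N,k}(r/2)$ balls of radius $r$, a number depending only on $N,k,R$ when $r \leq R$; hence \eqref{eqn:LocallyDoubling} holds uniformly over this class. The genuinely technical step, and the place where I expect the bookkeeping to be most delicate, is the construction of $\mathrm{d}_Z$: one must choose $k(i) \to \infty$ slowly enough that the cross-distances satisfy the triangle inequality across three different spaces $X_i$, $X_j$, $Y$, yet quickly enough that the $\varepsilon$-inclusions hold eventually for every fixed $R$ and $\varepsilon$.
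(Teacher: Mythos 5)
The paper does not actually prove this theorem; it is stated as a classical result of Gromov, with a pointer to the references Burago--Burago--Ivanov \cite{BuragoBuragoIvanovBook} and Gigli--Mondino--Savar\'e \cite{GigliMondinoSavare15}, and the ``in particular'' part is left to \cref{RicLocDoub}. Your proposal is exactly the standard textbook argument those references use: iterate doubling to get uniform total boundedness of balls, extract a subsequence along which nested nets have stable cardinality and converging distance matrices, complete the resulting pseudometric to obtain $(Y,\mathrm{d}_Y,y)$, and then realise the convergence extrinsically by gluing all the spaces into one ambient $Z$ with a padded cross-distance; the Bishop--Gromov volume-packing step for the Riemannian case is also the intended argument. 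I see no gap in the ideas.

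One place where the bookkeeping in your last paragraph needs a touch more care than the one sentence you give it: when $x\in B_R^{X_i}(x_i)$ sits very close to the boundary sphere, the nearest net point $p_a$ may satisfy $\mathrm{d}_Y([p_a],y)$ slightly \emph{greater} than $R$, so $[p_a]$ itself need not belong to $B_R^Y(y)$ and cannot directly witness the inclusion $\Psi_i(B_R^{X_i}(x_i))\subset[\Psi(B_R^Y(y))]_\varepsilon$ as literally stated in \cref{def:GHconvergence}. This is harmless in the length-space (hence Riemannian) setting, where you can replace $[p_a]$ by a point a short way along an almost-geodesic towards $y$, or equivalently note that $[\Psi(B_{R+\delta}^Y)]_\delta\subset[\Psi(B_R^Y)]_{2\delta}$ for length spaces; but for a general uniformly doubling class you should either verify that the limit you built is a length space when the $X_i$ are, or phrase the conclusion in one of the equivalent forms of pGH convergence (e.g.\ via $\varepsilon$-isometries on balls) that sidesteps the open/closed-ball boundary issue. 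This is a refinement of the ``technical step'' you already flagged, not a new obstruction.
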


Ricci-limit spaces have been thoroughly investigated starting from the 90s (after the seminal works of Cheeger--Colding \cite{ChCo0, ChCo1}), and inspired developments in non-smooth geometry, in particular CD and RCD spaces (Sturm \cite{Sturm1, Sturm2}, Lott--Villani \cite{LottVillani}, Ambrosio--Gigli--Savaré \cite{AGS1Inve, AGSDuke}, Gigli \cite{Gigli12}, ...). The theory is incredibly vast and has found a lot of applications recently, see \cref{BMandRCD} for a quick introduction and the surveys \cite{AmbrosioSurvey, DeGiorgiGromov}. Let us recall the following.

\begin{theorem}[Volume convergence, Colding and Cheeger--Colding \cite{ChCo1, Colding97}]\label{colding}
    Let $n\geq 2$, and $k\in\mathbb R$. Let $(M_i^n,g_i,p_i)$ be a sequence of pointed smooth complete Riemannian manifolds with $\mathrm{Ric}_{M_i}\geq k\cdot g_i$, and $\inf |B_1(p_i)| >0$. Then, up to subsequences 
    \[
    (M^n_i,\mathrm{d}_i,p_i,\mathcal{H}^n)\longrightarrow (X,\mathrm{d},p,\mathcal{H}^n), \quad \text{in the pmGH sense}.
    \]
\end{theorem}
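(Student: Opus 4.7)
The plan is to combine Gromov's precompactness theorem with a weak compactness argument for Radon measures, and then identify the limiting measure with $\mathcal H^n|_X$ via Colding's almost-rigidity theory. First I would apply Bishop--Gromov \eqref{eqn:BishopGromov} to observe that the class $\{(M^n,g):\mathrm{Ric}\geq kg\}$ is uniformly locally doubling, since the ratio $|B_{2r}(x)|/|B_r(x)|$ is bounded above by $V_{n,k}(2r)/V_{n,k}(r)$ uniformly on each scale. Gromov's precompactness theorem recalled in the excerpt then gives, up to a subsequence, pGH convergence $(M_i,\mathrm{d}_i,p_i)\to(X,\mathrm{d},p)$; fix an ambient complete separable space $(Z,\mathrm{d}_Z)$ and isometric embeddings $\Psi_i,\Psi$ realizing this as in \cref{def:GHconvergence}.

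Second, I would upgrade this to convergence of measures. Bishop--Gromov also yields the uniform bound $\mathcal H^n(B_R(p_i))\leq V_{n,k}(R)$ for every fixed $R>0$, so the pushforwards $(\Psi_i)_\#\mathcal H^n$ form a sequence of Radon measures on $Z$ whose mass on bounded sets is uniformly controlled. By standard weak compactness of locally finite Radon measures, a further subsequence converges weakly to some Radon measure $\widetilde\mu$ on $Z$; since the supports concentrate on $\Psi(X)$, we may write $\widetilde\mu=\Psi_\#\mu$ for a Radon measure $\mu$ on $X$. This already produces $(M_i,\mathrm{d}_i,p_i,\mathcal H^n)\to(X,\mathrm{d},p,\mu)$ in the pmGH sense.

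The substantive and hard step is the identification $\mu=\mathcal H^n|_X$. Here the non-collapsing hypothesis $\inf_i |B_1(p_i)|>0$ is essential: combined with Bishop--Gromov it implies two-sided bounds $c r^n\leq\mu(B_r(x))\leq C r^n$ on every sufficiently small ball, so $\mu$ is $n$-Ahlfors regular and in particular doubling. To pin down $\mu=\mathcal H^n|_X$, I would invoke Colding's almost-rigidity theorem: under a uniform Ricci lower bound, a ball is pGH-close to a Euclidean $r$-ball if and only if its volume is close to $\omega_n r^n$. Coupled with non-collapsing and an iterative blow-up argument, this forces the rescaled densities $r^{-n}\mu(B_r(x))$ to approach $\omega_n$ at $\mu$-a.e.\ $x\in X$, whence $\mu=\mathcal H^n|_X$ by a Vitali/Radon--Nikodym differentiation argument applied to the doubling measure $\mu$. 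The main obstacle is precisely Colding's ``almost Euclidean volume implies almost Euclidean ball'' result; the rest is compactness and routine measure theory.
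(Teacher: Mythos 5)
The paper does not give a proof of \cref{colding}; it is recorded as a known, deep theorem with attribution to Cheeger--Colding \cite{ChCo1} and Colding \cite{Colding97}. Your outline correctly identifies the standard skeleton from those sources: Bishop--Gromov yields the uniform local doubling that triggers Gromov precompactness; Bishop--Gromov again yields uniform mass bounds on balls so that the pushed-forward volume measures are weak-$*$ precompact; and the entire substance of the statement is concentrated in the identification of the limit measure $\mu$ with $\mathcal{H}^n\llcorner X$.

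The identification step is, however, where your proposal has a genuine gap. You propose to show that the densities $r^{-n}\mu(B_r(x))$ tend to $\omega_n$ for $\mu$-a.e.\ $x$ and then conclude by Vitali/Radon--Nikodym differentiation. But (i) non-collapsed Ricci limit spaces do have singular points where the volume density is strictly less than $\omega_n$, so the density claim can only hold away from a null set, and knowing that the singular set is $\mathcal{H}^n$-null is itself a nontrivial Cheeger--Colding structure theorem; and (ii) to run the proposed ``iterative blow-up'' and conclude that the rescaled $\mu$-measures of balls become Euclidean, one needs exactly volume convergence at the smaller scales, i.e.\ the theorem one is trying to prove, so the argument as stated is circular. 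In addition, even if the density were $1$ for $\mu$-a.e.\ $x$, passing from that to $\mu=\mathcal{H}^n\llcorner X$ requires a two-sided density comparison (both with respect to $\mu$ and with respect to $\mathcal{H}^n$), not merely a Radon--Nikodym argument. The proof in \cite{Colding97} is more direct and avoids this: it establishes $\mathrm{vol}_{g_i}(B_r(q_i))\to\mathcal{H}^n(B_r(q))$ for every $q\in X$ and every $r>0$ by combining the two directions of the almost-rigidity (almost-maximal volume $\Longleftrightarrow$ Gromov--Hausdorff closeness to a Euclidean ball) with a covering and almost-full-rank map argument, and then $\mu=\mathcal{H}^n$ follows because the two Radon measures agree on all balls. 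Your sketch is a fair description of where the difficulty lives, but as written it reduces the theorem to (a slightly disguised form of) the theorem, plus measure-theoretic steps that are not as routine as advertised.
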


\subsubsection{Isoperimetric problem and concentration--compactness}
We can use the theory of Ricci-limit spaces to detect what happens when we miss to have existence of isoperimetric sets in a non-compact manifold with Ricci curvature bounded from below. We state the following concentration--compactness theorem only for Riemannian manifolds with $\mathrm{Ric}\geq 0$. Analogous versions hold when $\mathrm{Ric}\geq k\cdot g$, or even in the non-smooth setting (finite dimensional Alexandrov or RCD spaces). We omit the proof, which can be found in A--F--Po \cite{AFP21} or A--Na--Po \cite{AntonelliNardulliPozzetta}. It is based on the broad principle of concentration-compactness, first put forward in Ritoré--Rosales \cite{RitRosales04}, Nardulli \cite{Nar14}, and then exploited in Mondino--Nardulli \cite{MondinoNardulli16}. The following theorem holds more in general for finite dimensional $\mathrm{RCD}$ spaces, see \cite{AntonelliNardulliPozzetta}.

\begin{theorem}[(A particular case of) A--Na--Po \cite{AntonelliNardulliPozzetta}]\label{concentrationcompactness}
    Let $(M^n,g)$ be a smooth complete non-compact Riemannian manifold with $\mathrm{Ric}\geq 0$, and $\inf_{x\in M}|B_1(x)|>0$. Let $v>0$. Then at least one of the following two hold.
    \begin{enumerate}
        \item There is an isoperimetric set with volume $v$ in $M$.
        \item There is a sequence of diverging points $x_i\in M$ and a pointed Ricci-limit space $(X,\mathrm{d},x)$ such that 
        \[
        (M,\mathrm{d},x_i,\mathcal{H}^n)\longrightarrow (X,\mathrm{d},x,\mathcal{H}^n), \qquad \text{in the pmGH sense},
        \]
        and there is an isoperimetric set $E\subset X$ with $|E|_X=v$ such that 
        \[
        P_X(E)=I_X(v)=I_M(v).
        \]
        
    \end{enumerate}
\end{theorem}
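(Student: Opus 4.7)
The plan is to apply a Lions-style concentration-compactness argument to a minimizing sequence, extract pmGH limits (Gromov's precompactness plus Colding's volume convergence, \cref{colding}) around the escaping centers of mass, and then exploit the concavity of $I_M^{n/(n-1)}$ provided by \cref{thm:ConcavitySharp} with $k=0$, together with the strict convexity of $t\mapsto t^{n/(n-1)}$, to force all the mass to accumulate in a single piece.

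\textbf{Step 1: mass decomposition.} Fix a basepoint $o\in M$ and a sequence $E_i\subset M$ with $|E_i|=v$ and $P(E_i)\to I_M(v)$. The assumptions $\mathrm{Ric}\geq 0$ and $\inf_{x\in M}|B_1(x)|>0$ imply, via Bishop--Gromov, a uniform local isoperimetric inequality on $M$ (and in particular $I_M(v)>0$, cf.\ \cref{rem:I>0}), which rules out ``vanishing'' of $|E_i|$ on arbitrarily large balls. An iterated dichotomy argument then produces, up to subsequences and up to a negligible part $G_i\subset E_i$ with $|G_i|\to 0$ and $P(G_i)\to 0$, finitely many ``centers'' $p_i^{(0)}=o,p_i^{(1)},\dots,p_i^{(N)}\in M$ with $\mathrm{d}(p_i^{(j)},p_i^{(k)})\to\infty$ for $j\neq k\geq 1$ and $\mathrm{d}(o,p_i^{(j)})\to\infty$ for $j\geq 1$, together with a partition $E_i\setminus G_i=F_i^{(0)}\sqcup\cdots\sqcup F_i^{(N)}$, each $F_i^{(j)}$ concentrated in a ball of radius $R_i\to\infty$ around $p_i^{(j)}$. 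Cut-offs at scale $R_i$ combined with Bishop--Gromov show that the cost of separation is asymptotically negligible, so that $\sum_j P(F_i^{(j)})\leq P(E_i)+o(1)$.

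\textbf{Step 2: pmGH limits and rigidity.} By Gromov's precompactness theorem and \cref{colding}, up to subsequences $(M,\mathrm{d},p_i^{(j)},\mathcal{H}^n)$ pmGH-converges to a pointed Ricci-limit $(X^{(j)},\mathrm{d}_{X^{(j)}},x^{(j)},\mathcal{H}^n)$, with $X^{(0)}=M$ and $x^{(0)}=o$. Using compactness and lower semicontinuity of the perimeter along non-collapsed pmGH-converging sequences (available in the Riemannian setting via Cheeger--Colding and in the broader $\mathrm{RCD}(K,N)$ framework cited in the preliminaries), each $F_i^{(j)}$ has a subsequential limit $F^{(j)}\subset X^{(j)}$ with $v_j:=|F^{(j)}|_{X^{(j)}}$, $\sum_j v_j=v$, and $\sum_{j=0}^N P_{X^{(j)}}(F^{(j)})\leq I_M(v)$. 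A symmetric approximation argument, together with perimeter lower semicontinuity, yields $I_{X^{(j)}}\geq I_M$ on $(0,v]$, whence $P_{X^{(j)}}(F^{(j)})\geq I_{X^{(j)}}(v_j)\geq I_M(v_j)$ and $\sum_j I_M(v_j)\leq I_M(v)$. On the other hand \cref{thm:ConcavitySharp} with $k=0$ gives concavity of $I_M^{n/(n-1)}$ with $I_M(0)=0$, hence subadditivity $I_M(v)^{n/(n-1)}\leq\sum_j I_M(v_j)^{n/(n-1)}$. Writing $a_j:=I_M(v_j)$, strict convexity of $t\mapsto t^{n/(n-1)}$ ($n\geq 2$) gives $\sum_j a_j^{n/(n-1)}\leq\bigl(\sum_j a_j\bigr)^{n/(n-1)}$, with equality iff at most one $a_j$ is nonzero, yielding
\[
I_M(v)^{n/(n-1)}\leq \sum_j a_j^{n/(n-1)}\leq \Bigl(\sum_j a_j\Bigr)^{n/(n-1)}\leq I_M(v)^{n/(n-1)}.
\]
Equality propagates, so at most one $a_j>0$. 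Since $\sum v_j=v>0$ and $I_M(v_j)=0\Leftrightarrow v_j=0$ by \cref{rem:I>0}, exactly one index $j^*$ has $v_{j^*}=v$. If $j^*=0$, then $F^{(0)}\subset M$ is an isoperimetric set of volume $v$ and (1) holds. If $j^*\geq 1$, the set $F^{(j^*)}\subset X^{(j^*)}$ satisfies $I_M(v)\leq I_{X^{(j^*)}}(v)\leq P_{X^{(j^*)}}(F^{(j^*)})\leq I_M(v)$, giving $I_{X^{(j^*)}}(v)=I_M(v)$ along the diverging sequence $p_i^{(j^*)}$, and (2) holds.

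\textbf{Main obstacle.} The main technical difficulty lies in the iterative dichotomy of Step 1: producing the uniformly separated decomposition of $E_i$ with an asymptotically sharp perimeter cost requires careful quantitative cut-off estimates, using Bishop--Gromov to guarantee that the perimeter added when cutting across annuli $B_{R_i+1}(p)\setminus B_{R_i}(p)$ vanishes in the limit. A secondary but non-trivial difficulty is the convergence theory of sets of finite perimeter under non-collapsed pmGH convergence needed in Step 2, which is by now well-developed in the non-smooth setting through the $\mathrm{BV}$ theory on $\mathrm{RCD}(K,N)$ spaces referenced in the preliminaries.
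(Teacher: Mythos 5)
The paper does not supply a proof of \cref{concentrationcompactness}; it refers to \cite{AFP21, AntonelliNardulliPozzetta}. So there is no in-paper argument to compare against, and I assess your attempt on its own terms. Your architecture --- a concentration--compactness decomposition of a minimizing sequence, followed by a convexity argument collapsing the decomposition to a single piece --- is the right philosophy, and the final bookkeeping with the strict convexity of $t\mapsto t^{n/(n-1)}$ is a clean way to pass from ``several pieces'' to ``one piece''. But as written the argument is circular. You invoke \cref{thm:ConcavitySharp} with $k=0$ to obtain subadditivity of $I_M^{n/(n-1)}$; since $M$ is non-compact, this is precisely Case~3 of that theorem, and Case~3 is proved in this very paper \emph{via} \cref{concentrationcompactness}: the proof takes a single isoperimetric set $E$ of volume $v$ in a diverging limit with $P_X(E)=I_X(v)=I_M(v)$ and runs \cref{MeanCurvatureMonotonicity} on that $E$ to build the affine barrier for $I_M^{n/(n-1)}$. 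You cannot feed the sharp concavity back into the proof of the very statement on which that concavity's proof rests.

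The correct logical order, implemented in the cited works, is: first establish the weaker \emph{generalized existence} --- a minimizing sequence decomposes, up to subsequence and negligible error, into one piece in $M$ and finitely many pieces in diverging non-collapsed pmGH limits $X^{(j)}$, each isoperimetric in its ambient, with $\sum_j v_j=v$, $\sum_j P_{X^{(j)}}(F^{(j)})=I_M(v)$, and $I_{X^{(j)}}\geq I_M$ --- with no appeal to \cref{thm:ConcavitySharp}; only afterwards rule out more than one nonzero piece, either by first deriving the concavity of $I_M^{n/(n-1)}$ from the generalized existence alone and \emph{then} applying your subadditivity bookkeeping, or directly by a mass-transference argument that exploits the constancy of the weak mean curvature to show two nonzero pieces can always be merged at strictly lower perimeter cost. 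The generalized-existence step is where the genuinely hard quantitative estimates live (precluding vanishing via the local isoperimetric inequality from $\inf|B_1|>0$, preserving volume in the limit, selecting cut radii so the annular perimeter error vanishes, finiteness of the number of pieces); your Step~1 treats it as a black box, which is acceptable in a sketch, but the closing ``main obstacle'' paragraph understates that this step, together with avoiding the circularity above, is essentially the entire theorem.
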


Notice that if we remove the hypothesis $\inf_{x\in M}|B_1(x)|>0$ in \cref{concentrationcompactness}, the isoperimetric problem on $(M^n,g)$ trivializes, see \cref{exTrivialI}. Compare this with \cref{Questionk-1}.

\subsection{Weak mean curvature and sharp concavity in the non-compact case}

At this point we are ready to complete the proof of \textbf{Case 3} in \cref{thm:ConcavitySharp}. We say that $(X,\mathrm{d})$ is an \emph{$n$-dimensional non-collapsed Ricci-limit space with $\mathrm{Ric}\geq 0$} if there are $(M^n_i,g_i,p_i)$, with $\mathrm{Ric}\geq -\varepsilon_i\cdot g_i$, $\varepsilon_i\to 0$, and $\inf|B_1(p_i)|>0$ such that 
\[
(M^n_i,\mathrm{d}_i,p_i)\to (X,\mathrm{d},x\in X), \qquad\text{in the pGH sense}.
\]
Hence, by \cref{colding}, also the volume measures converge. The following result is taken from A--G \cite{AntonelliGlaudo}, and it is a re-elaboration of the results of A--Pa--Po--S \cite{ConcavitySharpAPPS}. Again, we focus on Ricci-limit spaces here but the result can be proved for $n$-dimensional $\mathrm{RCD}(0,n)$ spaces $(X,\mathrm{d},\mathcal{H}^n)$ as well.

\begin{theorem}[A--G \cite{AntonelliGlaudo} after A--Pa--Po-S \cite{ConcavitySharpAPPS, ConcavitySharpAPPS2}]\label{MeanCurvatureMonotonicity}
    Let $(X,\mathrm{d})$ be a non-compact $n$-dimensional non-collapsed Ricci-limit space with $\mathrm{Ric}\geq 0$, and let $E\subset X$ be an isoperimetric set\footnote{By A--Pa--Po \cite{AntonelliPasqualettoPozzetta21} there is an open set $\tilde E$ such that $|\tilde E\Delta E|=0$, and $\partial^*\tilde E=\partial \tilde E$. Without loss of generality we always consider this representative.}. Let
    \begin{equation*}
        E_r :=
        \begin{cases}
            \{x\in X: \mathrm{d}_{\overline E}(x)\leq r\}
            &\quad\text{if $r \geq 0$,} \\
            \{x\in X: \mathrm{d}_{X\setminus E}(x)\geq -r\} 
            &\quad\text{if $r < 0$},
        \end{cases}
    \end{equation*}
    where $\mathrm{d}_F:X\to[0,\infty)$ denotes the distance from the set $F$.
    
    There exists $H\geq 0$\footnote{$H$ agrees with the (constant) mean curvature of the regular part of $\partial E$, whenever $X$ is a Riemannian manifold. Moreover, if $I_X$ is differentiable at $|E|$, we have $H=I'_X(|E|)$.},
    such that the function
    \begin{equation}\label{Bellaaa}
        \mathbb R\ni r\mapsto P_X(E_r)^{\frac{n}{n-1}}-\frac{n}{n-1}HP_X(E)^{\frac{1}{n-1}}|E_r|,
    \end{equation}
    achieves its maximum at $r=0$ on $(-\infty,\infty)$\footnote{One can show that $E_r=\emptyset$ for every $r\leq -\frac{n-1}{H}$. Hence, for every $r\leq -\frac{n-1}{H}$ the previous function is constantly equal to zero.}. Moreover, for almost every $r\geq s\geq  0$, we have
    \begin{equation}\label{eq:enlargement-ratio}
        \frac{P_X(E_r)^{\frac{n}{n-1}}}{|E_r|} \leq 
        \frac{P_X(E_s)^{\frac{n}{n-1}}}{|E_s|},
    \end{equation}
    and the function 
    \[
    [0,\infty)\ni r\mapsto |E_r|^{1/n}, \qquad \text{is concave}.
    \]
\end{theorem}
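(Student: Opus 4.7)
The plan is to reduce all three statements to one-dimensional $\mathrm{CD}(0,n)$ analysis via the localization technique adapted to the signed distance function from $\partial E$, and to exploit the isoperimetric minimality of $E$ to identify the scalar $H$ and derive the matching needlewise Riccati-type inequalities. This is in the spirit of the localization approach of \cref{Localization}.

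First I would introduce the signed distance $u(x) := \mathrm{d}(x,\overline{E}) - \mathrm{d}(x, X\setminus E)$, which is 1-Lipschitz and satisfies $E_r = \{u \leq r\}$. Applying the Cavalletti--Mondino localization adapted to $u$ (via an exhaustion argument with compactly supported mean-zero guiding functions in the non-compact setting), I would obtain a disintegration of $\mathcal{H}^n$ along transport rays $X_\alpha$ parametrized by $u$, whose 1D densities $h_\alpha$ satisfy the $\mathrm{CD}(0,n)$ condition, i.e.\ $h_\alpha^{1/(n-1)}$ is concave on each needle. The coarea formula then gives
\[
|E_r| = \int_Q \Big(\int_{s_\alpha^-}^{\min(r, s_\alpha^+)} h_\alpha(t)\, dt\Big)\, d\mathfrak{q}(\alpha), \qquad P(E_r) = \int_Q h_\alpha(r)\, d\mathfrak{q}(\alpha) \text{ for a.e. } r.
\]

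Next I would extract $H$ from the minimality of $E$: a first-variation argument along the parallel-set flow forces the logarithmic derivative $h'_\alpha(0)/h_\alpha(0)$ to be $\mathfrak{q}$-a.e.\ constant, and I define this common value to be $H \geq 0$, which plays the role of the weak constant mean curvature of $\partial E$ and agrees with $I'_X(|E|)$ whenever the isoperimetric profile is differentiable at $|E|$. Combined with $\mathrm{CD}(0,n)$-concavity, this yields the needlewise tangent-line estimate $h_\alpha^{1/(n-1)}(r) \leq h_\alpha(0)^{1/(n-1)}(1 + rH/(n-1))$ for $r \geq 0$. Raising to the power $n-1$ and integrating over $\mathfrak{q}$ gives
\[
P(E_r) \leq P(E)\Big(1+\tfrac{rH}{n-1}\Big)^{n-1}, \qquad |E_r| \leq |E| + \tfrac{(n-1)P(E)}{nH}\Big[\big(1+\tfrac{rH}{n-1}\big)^n - 1\Big],
\]
and a short algebraic manipulation shows these two estimates imply $\Phi(r) := P(E_r)^{n/(n-1)} - \tfrac{n}{n-1}H P(E)^{1/(n-1)} |E_r| \leq \Phi(0)$ on $[0,\infty)$; the analogous argument applied inside $E$ (along the needles with $s_\alpha^- < 0$) handles $r<0$. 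For the concavity of $r \mapsto |E_r|^{1/n}$ on $[0,\infty)$ (equivalently, the monotonicity of $P(E_r)^{n/(n-1)}/|E_r|$), I would again use the needlewise Riccati inequality together with the fact that the minimality of $E$ forces a common normalization $h'_\alpha(0)/h_\alpha(0) \equiv H$ across needles, which is what lets the integration aggregate the 1D concavities into a global concavity (this calibration is what distinguishes isoperimetric sets from arbitrary sets, for which $|E_r|^{1/n}$ need not be concave).

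The main obstacle is twofold. First, rigorously implementing the $L^1$-localization on a non-compact Ricci-limit space and proving that the densities $h_\alpha$ inherit the $\mathrm{CD}(0,n)$ condition up to the boundary $\{u=0\}$, so that the tangent-line estimate at $r=0$ actually holds. Second, showing that the minimality of $E$ forces $h'_\alpha(0)/h_\alpha(0)$ to be $\mathfrak{q}$-a.e.\ constant—this is the weak-form content of the constant-mean-curvature property of $\partial E$ and is the engine driving the entire argument.
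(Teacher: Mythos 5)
Your proposal takes a genuinely different route: where the paper establishes the key \emph{adimensional} Laplacian bound $\boldsymbol\Delta\, \mathrm{d}_{\overline E}\leq H$ on $X\setminus\overline E$ (and $\geq H$ on $E$) by a viscosity-barrier contradiction argument involving Hopf--Lax transforms and an explicit volume-preserving perturbation of $E$, and only \emph{afterwards} invokes the $1$-D localization of $\boldsymbol\Delta\,\mathrm{d}_{\overline E}$ (\`a la \cite{CM20}) to upgrade to the sharp Riccati bound \eqref{eqn:Distributional}, you propose to jump directly to the needle decomposition and to deduce everything from the $\mathrm{CD}(0,n)$ behaviour of the densities $h_\alpha$. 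That is a legitimate structural rearrangement. However, there are two gaps, one of which is the crux of the whole theorem.

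\textbf{The central claim is asserted, not proved.} You assert that ``a first-variation argument along the parallel-set flow'' forces the initial logarithmic derivative $h_\alpha'(0^+)/h_\alpha(0^+)$ to be $\mathfrak q$-a.e.\ constant. This is exactly the weak constant-mean-curvature statement, and it \emph{is} the theorem's engine, as you concede. The parallel flow $E\mapsto E_r$ moves $\partial E$ uniformly, so by itself it cannot detect a nonconstant initial slope; to conclude constancy you need to move the boundary \emph{inward} on needles where the slope is too large and \emph{outward} where it is too small, in a volume-preserving way, and show this strictly decreases perimeter. The paper's Step~1 does precisely this, but at the level of viscosity subsolutions/supersolutions of the Laplacian and Hopf--Lax envelopes, where the volume and perimeter changes can be controlled via the Gauss--Green formula of \cite{BPSGaussGreen}. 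Reformulating this directly at the level of the densities $h_\alpha$ (with the interior and exterior needle systems being \emph{a priori} unrelated, and $h_\alpha$ being only defined up to $\mathfrak q$-null sets with one-sided limits at $r=0$) is a nontrivial technical program; it is not a ``first variation'' that one can simply invoke. Without this step, $H$ is not defined and the rest of the proof has nothing to run on.

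\textbf{The ``short algebraic manipulation'' is incorrect.} From the two integrated upper bounds $P(E_r)\leq P(E)\big(1+\tfrac{Hr}{n-1}\big)^{n-1}$ and $|E_r|\leq |E|+\tfrac{(n-1)P(E)}{nH}\big[(1+\tfrac{Hr}{n-1})^n-1\big]$ one cannot deduce $\Phi(r)\leq\Phi(0)$, since $|E_r|$ enters $\Phi$ with a negative sign and you only have an upper bound on it. Concretely, if an abstract pair $(P,V)$ with $V'=P$ satisfies $P(r)\equiv P(0)$ on $[0,r_0]$ and $P(r)=P(0)\big(1+\tfrac{Hr}{n-1}\big)^{n-1}$ for $r\geq r_0$, then both your estimates hold, yet a direct computation gives
\[
\Phi(r_0^+)-\Phi(0)=P(0)^{\frac{n}{n-1}}\Big[\big(1+\tfrac{Hr_0}{n-1}\big)^n-1-n\tfrac{Hr_0}{n-1}\Big]>0
\]
by strict convexity of $x\mapsto(1+x)^n$. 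What you actually need is the pointwise differential Riccati inequality
\[
(\log h_\alpha)'(r)\ \leq\ \frac{H}{1+\frac{H}{n-1}r},\qquad r>0,
\]
which \emph{does} follow from the concavity of $h_\alpha^{1/(n-1)}$ together with the slope normalization at $r=0$: for $g$ concave with $g(0)=a$, $g'(0)=b$, one has $g(r)\geq a+rg'(r)$ (support line at $r$), hence $g'(r)(a+br)-bg(r)\leq a(g'(r)-b)\leq 0$, i.e.\ $g'(r)/g(r)\leq b/(a+br)$. Aggregating over $\mathfrak q$ then gives $\tfrac{\mathrm d}{\mathrm dr}P(E_r)\leq \tfrac{H}{1+\frac{H}{n-1}r}P(E_r)$ for a.e.\ $r>0$, which is precisely \eqref{eqn:Distributional2}, and from this differential inequality (not from the integrated bound alone) the three conclusions of the theorem follow as in \cref{ExComputationsAG}. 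So this part is fixable by replacing the tangent-line estimates with the Riccati inequality, but the argument as stated does not close.

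One further technical remark: the disintegration you want is not the $L^1$-localization of \cref{Localization} for a mean-zero guiding function, but the decomposition of the measure-valued Laplacian of the (signed) distance from $\partial E$ into a nonpositive singular part and a regular part $(\log h_\alpha)'\mathcal H^n$ as in \cite[Corollary 4.16]{CM20}; the singular part matters because it contributes (with the right sign) to the distributional Laplacian comparison, and omitting it would lose control of a potentially large piece of $\boldsymbol\Delta\,\mathrm d_{\overline E}$.
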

\begin{proof}
    Let us present a sketch. The main technically demanding point is the following: By A--Pa--Po--S \cite[Theorem 3.3]{ConcavitySharpAPPS} we get that there exists $H\geq 0$ such that
    \begin{equation}\label{eqn:Distributional}
    \Delta \mathrm{d}_{\overline{E}} \leq \frac{H}{1+\frac{H}{n-1}\mathrm{d}_{\overline{E}}}
    \end{equation}
    holds in the (appropriate) distributional (or, equivalently, viscosity) sense on $X\setminus\overline{E}$\footnote{The proof of the latter property is inspired by ideas of Caffarelli--Cordoba \cite{CaffarelliCordoba93}, Petrunin \cite{Petruninharmonic}, and Mondino--Semola \cite{MoS21}. Similar ideas appeared in \cite{WhiteBup}. See \cref{MCB} for a detailed road-map of the proof.}. Observe: \eqref{eqn:Distributional} is a weak way to encode the evolution equation, which holds when $X,\partial E$ are smooth,
    \begin{equation}\label{eqn:ControlHt}
    \frac{\mathrm{d}}{\mathrm{d}t} H_t = -|\mathrm{II}_t|^2 - \mathrm{Ric}(\nu_t,\nu_t) \leq -\frac{H_t^2}{n-1},
    \end{equation}
    for $t\in [0,\varepsilon)$, where $H_t,\mathrm{II}_t,\nu_t$ denote the mean curvature, second fundamental form, and unit outward-pointing normal of $\partial E_t$. 
    
    Integrating \eqref{eqn:Distributional} on a slab $E_R\setminus E_r$, for $0<r<R$, using Gauss--Green, and sending $R\to r$ we get 
    \begin{equation}\label{eqn:Distributional2}
    \frac{\mathrm{d}}{\mathrm{d}t}P(E_t)\leq P(E_t)\frac{H}{1+\frac{H}{n-1}t}, \qquad \text{distributionally on $(0,\infty)$}.
    \end{equation}
    From \eqref{eqn:Distributional2} we get that the function in \eqref{Bellaaa}, restricted to $[0,+\infty)$, achieves its maximum at $r=0$ (actually, it's non-increasing), and \eqref{eq:enlargement-ratio} holds, see \cref{ExComputationsAG}. Finally, by exploiting the analogue of \eqref{eqn:Distributional} inside of $E$, we also get that the function in \eqref{Bellaaa}, restricted to $(-\infty,0]$, achieves its maximum at $r=0$, as desired.

    To infer the last concavity, just notice that, for almost every $r\in [0,\infty)$,
    \[
    \frac{\mathrm{d}}{\mathrm{d}r} |E_r|^{1/n} = \frac{1}{n}\frac{P(E_r)}{|E_r|^{\frac{n-1}{n}}} = \frac{1}{n} \left(\frac{P(E_r)^{\frac{n}{n-1}}}{|E_r|}\right)^{\frac{n-1}{n}},
    \]
    which is non-increasing due to \eqref{eq:enlargement-ratio}.
\end{proof}
\begin{proof}[Proof of Case 3 of \cref{thm:ConcavitySharp}]
   We give the proof in the case $k=0$ for simplicity. Observe that the assertion we want to show is equivalent to showing that $(I^{\frac{n}{n-1}})''\leq 0$ holds in the viscosity sense on $(0,|M|)$, namely that $I^{\frac{n}{n-1}}$ is concave.
   
   Fix $v>0$. If there is an isoperimetric set with volume $v$ we can argue as in \textbf{Case 1} (or \textbf{Case 2}) of the proof of \cref{thm:ConcavitySharp}. If not, by \cref{concentrationcompactness}, there is a non-collapsed Ricci limit space at infinity $(X,\mathrm{d},x)$ and an isoperimetric set $E\subset X$, with $|E|_X=v$, such that
   \[
   P_X(E)=I_X(|E|_X)=I_M(|E|).
   \]
   Denote
   \[
   f(x):=P_X(E)^{\frac{n}{n-1}}-\frac{n}{n-1}HP_X(E)^{\frac{1}{n-1}}|E|+\frac{n}{n-1}HP_X(E)^{\frac{1}{n-1}}x,
   \]
   and observe that $I_X\geq I_M$, see \cref{IXIM}. Hence, by \cref{MeanCurvatureMonotonicity} we have, for every $r\in (-\infty,\infty)$, 
   \[
   I_M(|E_r|)^{\frac{n}{n-1}} \leq I_X(|E_r|)^{\frac{n}{n-1}} \leq P_X(E_r)^{\frac{n}{n-1}} \leq f(|E_r|),
   \]
   and $I_M(|E|)^{\frac{n}{n-1}}=I_X(|E|)^\frac{n}{n-1}=f(|E|)$. Thus $f$ is an affine function touching $I_M^{\frac{n}{n-1}}$ from above at $|E|=v$, and then $(I^{\frac{n}{n-1}})''\leq 0$ in the viscosity sense at $v$, as desired.
   \end{proof}

{
\subsubsection{Mean curvature barriers}\label{MCB}

We give a detailed road-map of the proof of \eqref{eqn:Distributional}. Let us recall some notions that are useful. The presentation here draws heavily from A--Pa--Po--S \cite{ConcavitySharpAPPS}.
\begin{enumerate}
    \item The \emph{Cheeger energy} (\cite{AGS1Inve} after \cite{CheegerGAFA}) {on an $\mathrm{RCD}(K,N)$ metric measure space} \((X,\mathrm{d},\mathfrak{m})\) is defined as 
\[
\mathrm{Ch}(f):=\inf\bigg\{\liminf_{n\to\infty} \frac{1}{2}\int |\nabla f_n|^2\mathrm{d}\mathfrak{m}\;\bigg|\;(f_n)_{n\in\mathbb N}\subseteq{\rm LIP}_{\mathrm{bs}}(X),\,f_n\to f\text{ in }L^2(X)\bigg\}\, .
\]
Here $|\nabla f|(x):=\limsup_{y\to x}\frac{|f(y)-f(x)|}{\mathrm{d}(x,y)}$. The \emph{Sobolev space} \(H^{1,2}(X)\) is defined as the finiteness domain \(\{f\in L^2(X)\,:\,\mathrm{Ch}(f)<+\infty\}\) of the Cheeger energy.
With a slight abuse of notation, when $f\in H^{1,2}(X)$, \(\mathrm{Ch}(f)=\frac{1}{2}\int|\nabla f|^2\mathrm{d}\mathfrak{m}\),
for a uniquely determined function \(|\nabla f|\in L^2(X)\).

Moreover, given an open $\Omega\subset X$, we declare that a given function \(f\in L^2(\Omega)\)
belongs to the \emph{local Sobolev space} \(H^{1,2}(\Omega)\) provided \(\eta f\in H^{1,2}(X)\) for every \(\eta\in{\rm LIP}_{\mathrm{bs}}(\Omega)\) and {the function}
\[
|\nabla f|:={\rm ess\,sup}\big\{\chi_{\{\eta=1\}}|\nabla(\eta f)|\;\big|\;\eta\in{\rm LIP}_{\mathrm{bs}}(\Omega)\big\}
\]
is in $L^2(X)$.
\item For an open set $\Omega\subset X$ We define the bilinear mapping \(H^{1,2}(\Omega)\times H^{1,2}(\Omega)\ni(f,g)\mapsto\nabla f\cdot\nabla g\in L^1(\Omega)\) as
\[
\nabla f\cdot\nabla g:=\frac{|\nabla(f+g)|^2-|\nabla f|^2-|\nabla g|^2}{2},\quad\text{ for every }f,g\in H^{1,2}(\Omega)\, .
\]
\item We say that a function
\(f\in H^{1,2}(\Omega)\) has \emph{measure-valued Laplacian} in \(\Omega\), \(f\in D(\mathbf\Delta,\Omega)\) for short, provided there
exists a (uniquely determined) locally finite measure \(\mathbf\Delta f\) on \(\Omega\) such that
\[
\int_\Omega g\mathbf\Delta f:=\int_\Omega g\mathrm{d}\mathbf\Delta f=-\int_\Omega\nabla g\cdot\nabla f\mathrm{d}\mathfrak{m}\, ,\quad\text{ for every }g\in{\rm LIP}_{\mathrm{bs}}(\Omega)\, .
\]
Moreover, given functions \(f\in{\rm LIP}(\Omega)\cap H^{1,2}(\Omega)\)
and \(\eta\in C_b(\Omega)\), we say that \emph{\(\mathbf\Delta f\leq\eta\) in the distributional sense} if \(f\in D(\mathbf\Delta,\Omega)\) and
\(\mathbf\Delta f\leq\eta\mathfrak{m}\) as measures.
\item Let $\Omega\subset X$ be an open and bounded domain. Let $f:\Omega\to\mathbb R$ be locally Lipschitz and $\eta\in\mathrm{C}_b(\Omega)$. We say that $\Delta f\leq \eta$ in the \textit{viscosity sense} in $\Omega$ if the following holds. For any $\Omega'\Subset\Omega$ and for any test function $\phi:\Omega'\to\mathbb R$ such that 
\begin{itemize}
\item[(i)] $\phi\in D(\mathbf\Delta, \Omega')$ and $\Delta\phi$ is continuous on $\Omega'$;
\item[(ii)] for some $x\in \Omega'$ we have 
$\phi(x)=f(x)$ and $\phi(y)\leq f(y)$ for any $y\in\Omega'$, $y\neq x$;
\end{itemize}
we have
\begin{equation*}
\Delta \phi(x)\leq \eta(x)\, .
\end{equation*} 
{ A function $\phi$ as in items (i) and (ii) above will be called {\em lower supporting function of $f$}. When instead of $\leq$ we consider $\geq$ in the definition above, a function $\phi$ as in items (i) and (ii) will be called {\em upper supporting function of $f$}.}
\item Bounds for the Laplacian in the viscosity and distributional sense are equivalent in the smooth Riemannian setting. This is true also for $\mathrm{RCD}$ spaces, see \cite[Theorem 3.24]{MoS21} and \cite{GMSPams}. In particular the following holds.

Let $\Omega\subset X$ be an open and bounded domain, $f:\Omega\to\mathbb R$ be a Lipschitz function and $\eta:\Omega\to\mathbb R$ be continuous. Then $\boldsymbol{\Delta} f\leq \eta$ in the sense of distributions if and only if $\Delta f\leq \eta$ in the viscosity sense.
\end{enumerate}

We are now ready for the proof of the fact that it exists $H\geq 0$ such that \eqref{eqn:Distributional} holds in \cref{MeanCurvatureMonotonicity}.

\begin{proof}[Proof of \eqref{eqn:Distributional}]
The proof will be divided into two steps. Let $f:=\mathrm{d}_{\overline E}$. In the first one we are going to prove the weaker adimensional bounds
\begin{equation}\label{eq:adimensional laplacian comparison}
\boldsymbol{\Delta} f\geq H\,\quad\text{on $E$ and }\quad \boldsymbol{\Delta} f\leq H\, \quad\text{on $X\setminus \overline{E}$}\, ,
\end{equation}
corresponding to the limit as $n\to \infty$ of the bounds in \eqref{eqn:Distributional}. In the second part we will show how to obtain the sharp dimensional bounds with an application of the localization technique.

\medskip

\textbf{Step 1.} 
By Items (4) and (5) in the list at the beginning of \cref{MCB}, {\eqref{eq:adimensional laplacian comparison}} is equivalent to the following claim.

\textbf{Claim}: The supremum of the values of the Laplacians of lower supporting functions of $f$ 
at touching points on $X\setminus\overline{E}$ is lower than the infimum of the values of Laplacians of upper supporting functions of $f$ at touching points on $E$. Thus, letting $H$ be any value between the supremum and the infimum of the two sets, then $\Delta f\leq H$ holds on $X\setminus \overline{E}$ in the viscosity sense and $\Delta f\geq H$ holds on $E$, again in the viscosity sense. Thus, by item (5) in the list at the beginning of \cref{MCB}, \eqref{eq:adimensional laplacian comparison} also holds in the sense of distributions.
\medskip

Let us prove the claim. We argue by contradiction. If it is not true, by a truncation argument and by possibly adding/subtracting multiples of the squared distance function, we can find $x\in X\setminus \overline{E}$, $y\in E$, $\lambda>0$, $\delta>0$ (that we think to be very small, {in particular, $\lambda<f(x)$ and $\delta<\lambda$}) and supporting functions $\overline{\psi}:X\to\mathbb R$ and $\overline{\chi}:X\to\mathbb R$ with the following properties:
\begin{itemize}
\item[ia)] $\overline{\psi}:X\to\mathbb R$ is Lipschitz and 
{it belongs to the domain of the measure-valued}
Laplacian on $B_{\lambda}(x)$;
\item[iia)] $\overline{\psi}(x)=f(x)$;
\item[iiia)] $\overline{\psi}(z)< f(z)$ for any $z\neq x$ and $\overline{\psi}<f-\delta$ on $X\setminus B_{\lambda}(x)$;
\end{itemize}
and 
\begin{itemize}
\item[ib)] $\overline{\chi}:X\to\mathbb R$ is Lipschitz and 
{it belongs to the domain of the measure-valued}
Laplacian on $B_{\lambda}(y)$;
\item[iib)] $\overline{\chi}(y)=f(y)$;
\item[iiib)] $\overline{\chi}(z)>f(z)$ for any $z\neq y$ and $\overline{\chi}>f+\delta$ on $X\setminus B_{\lambda}(y)$.
\end{itemize}
Moreover, there exist $c\in\mathbb R$ and $\varepsilon>0$ such that
\begin{equation}\label{eq:visccon1}
{\boldsymbol{\Delta}}\overline{\psi}\geq c+\varepsilon\, \quad\text{on $B_{\lambda}(x)$}\, ,
\end{equation}
and 
\begin{equation}\label{eq:visccon2}
{\boldsymbol{\Delta}}\overline{\chi}\leq c-\varepsilon\, \quad\text{on $B_{\lambda}(y)$}\, .
\end{equation}
\medskip

We consider the transform of $\overline{\psi}$ through the Hopf--Lax duality and introduce $\phi:X\to\mathbb R$ by letting
\begin{equation}\label{eq:HLphi}
\phi(z):=\sup_{w\in X}\{\overline{\psi}(w)-\mathrm{d}(w,z)\}\, .
\end{equation}
Analogously, we let $\eta:X\to\mathbb R$ be defined by
\begin{equation}\label{eq:HLeta}
\eta(z):=\inf_{w\in X}\{\overline{\chi}(w)+\mathrm{d}(z,w)\}\, .
\end{equation}
\medskip

Let $X_{\Sigma}$ and $Y_{\Sigma}$ be the sets of touching points of minimizing geodesics from $x$ and $y$ respectively to $\Sigma:=\partial E$, i.e.
\begin{equation}
X_{\Sigma}:=\{w\in \partial E\, :\, f(x)-f(w)=\mathrm{d}(x,w)\},
\end{equation}
and
\begin{equation}
Y_{\Sigma}:=\{w\in \partial E\, :\, f(w)-f(y)=\mathrm{d}(y,w)\}\, .
\end{equation}
It is easy to verify that $X_{\Sigma}$ and $Y_{\Sigma}$ are compact subsets of $\partial E$. 

We get that $\phi\leq f$, $\eta\geq f$ {because $\overline{\psi}\leq f$ and $\overline{\chi}\geq f$ respectively}. Moreover, both $\phi$ and $\eta$ are $1$-Lipschitz functions, {because they are defined as suprema and infima of families of $1$-Lipschitz functions and they are finite at some point.}
Moreover, we claim that there exist neighborhoods $U_{\Sigma}\supset X_{\Sigma}$ and $V_{\Sigma}\supset Y_{\Sigma}$ such that: 
\begin{itemize}
\item[a)] $|\nabla\phi|=1$ holds $\mathcal{H}^n$-a.e. on $U_{\Sigma}$; 
\item[b)] $|\nabla\eta|=1$ holds $\mathcal{H}^n$-a.e. on $V_{\Sigma}$;
\item[c)] $\boldsymbol{\Delta}\phi \geq c+\varepsilon$ on $U_{\Sigma}$;
\item[d)] $\boldsymbol{\Delta}\eta\leq c-\varepsilon$ on $V_{\Sigma}$;
\item[e)] $\phi(z)=f(z)$ for any $z\in X$ such that $f(x)-f(z)=\mathrm{d}(x,z)$; 
\item[f)]$\eta(z)=f(z)$ for any $z\in X$ such that $f(z)-f(y)=\mathrm{d}(z,y)$.
\end{itemize} 
We leave the proof to the reader, a key tool being that the Laplacian bounds are preserved through the Hopf--Lax duality, see \cite[Theorem 4.9]{MoS21}. 
\smallskip

We can make the following reduction, whose details can be read in \cite{ConcavitySharpAPPS}. The original points $x$, $y$ in the contradiction argument can be taken as close as we wish to $\partial E$. Moreover they can be taken to have unique minimizing geodesics to $\partial E$, hence in particular unique footpoints on $\partial E$, that we shall denote by $x_E$ and $y_E$ respectively.
To finish the proof, we need to consider two cases. 

\textbf{Case 1: {$x_E=y_E$.}}

By construction, we have $\eta\geq f\geq\phi$. {Moreover, $\eta(x_E)=f(x_E)=\phi(x_E)$, by e), f).\\
Let us set $g:=\eta-\phi$. Observe that $g\geq 0$ and $g(x_E)=0$. Moreover, by c) and d), there exists a neighbourhood of $x_E$ where}
\begin{equation}
\boldsymbol{\Delta} g\leq (c-\varepsilon)+(-c-\varepsilon)\leq -2\varepsilon\, .
\end{equation} 
We get a contradiction, since $g$ would be a non-constant superharmonic function attaining its minimum at an interior point, see \cite[Theorem 2.8]{GigliRigoni}.

\medskip

\textbf{Case 2: {$x_E\neq y_E$}.}

{
Let us define
\begin{equation*}
E_{s,0}:=E\setminus \{\phi > s\}\, .
\end{equation*}
Observe that for $s=0$ we have $\{\phi>0\}\cap E=\emptyset$, since $\{\phi>0\}\subset \{f>0\}\subset X\setminus E$ by construction. When we decrease the value of $s$, the super-level set $ \{\phi >s\}$ starts cutting $E$.

Recall that $x_E\in \partial E$ is the footpoint of the minimizing geodesic from $x$ to $\overline{E}$. We claim that for any $s<0$ sufficiently close to $0$, $E_{s,0}$ is a perturbation of $E$ supported in a small ball $B_{r}(x_E)$, i.e. $\{\phi>s\}\cap E\subset B_r(x_E)$. We leave the verification of this claim to the reader.
\smallskip

A completely analogous verification shows that, for $0<t<\delta$, the set $E_{0,t}:=E\cup \{\eta\leq t\}$ is a perturbation of $E$ compactly supported in a small ball $B_r(y_E)$. When $r<\mathrm{d}(x_E,y_E)/2$, the interior and the exterior perturbations have disjoint supports. It is also elementary to check that the two perturbations are non-trivial.
\medskip
}

Let us set now, for any $-\delta\leq s\leq  0<t<\delta$,
\begin{equation}
E_{s,t}:=E\setminus \{\phi\geq s\}\cup\{\eta\leq t\}\, .
\end{equation}
We claim that there exist values $s,t$ in the range above such that 
\begin{equation}\label{eq:volpres}
\mathcal{H}^n(E_{s,t})=\mathcal{H}^n(E)
\end{equation}
and 
\begin{equation}\label{eq:perturb}
\mathcal{H}^n(E_{s,0})<\mathcal{H}^n(E)<\mathcal{H}^n(E_{0,t})\, .
\end{equation}
In order to establish the claim it is sufficient to prove that 
\begin{equation}
(s,t)\mapsto \mathcal{H}^n(E_{s,t}),
\end{equation}
is a continuous function. {Indeed, \eqref{eq:perturb} follows immediately from the non-triviality of the perturbations.} The sought continuity is a direct consequence of the $1$-Lipschitz regularity of $\phi$ and $\eta$, together with the compactness of the perturbations $E_{s,t}\Delta E$ and the properties $|\nabla\phi|=|\nabla\eta|=1$ $\mathcal{H}^n$-a.e., which guarantee that 
\begin{equation}
\mathcal{H}^n\left(\{\phi=s\}\cap U_{\Sigma}\right)=\mathcal{H}^n\left(\{\eta=t\}\cap V_{\Sigma}\right)=0\, ,
\end{equation}
for any $-\delta\leq s\leq 0<t<\delta$.

Given the claim, it is easy to find by a continuity argument $s$ and $t$ such that \eqref{eq:volpres} and \eqref{eq:perturb} hold. Moreover
letting $\Omega$ be the open neighbourhood of $\{x_E,y_E\}$ where the perturbation $E_{s,t}\Delta E$ is compactly contained, we have, by \cite[Proposition 6.1]{BPSGaussGreen} (see there for the precise explanation of the following intuitive notation),
\begin{equation}
\left(\nabla\phi\cdot\nu_{\{\phi<s\}}\right)_{\mathrm{int}}=\left(\nabla\phi\cdot\nu_{\{\phi<s\}}\right)_{\mathrm{ext}}=-1\, \quad \mathrm{Per}_{\{\phi<s\}}\text{-a.e. on $\Omega$},
\end{equation}
and 
\begin{equation}
\left(\nabla\eta\cdot\nu_{\{\eta<t\}}\right)_{\mathrm{int}}=\left(\nabla\eta\cdot\nu_{\{\eta<t\}}\right)_{\mathrm{ext}}=-1\, \quad \mathrm{Per}_{\{\eta<t\}}\text{-a.e. on $\Omega$}\, .
\end{equation}

We are going to reach a contradiction comparing the perimeter of $E_{s,t}$ with that of $E$. We estimate separately the differences in the perimeter coming from the perturbation induced by $\phi$ and $\eta$ by disjointedness:
\begin{equation}\label{eq:sepcontributions}
P(E_{s,t})-P(E)= \left(P(E_{s,0},\Omega)-P(E,\Omega)\right)+\left(P(E_{0,t},\Omega)-P(E,\Omega)\right)\, .
\end{equation}

The two contributions can be estimated as follows. Let us set $F:=E\cap \{\phi>s\}$ and $G:=\{\eta<t\}\setminus E$. Observe that by \eqref{eq:volpres} we have $\mathcal{H}^n(F)=\mathcal{H}^n(G)$.

On the one hand we can use the Gauss--Green formula and the results in \cite[Theorem 5.2, Proposition 5.4]{BPSGaussGreen}. Here $\mathcal{F}E$ denotes the reduced boundary (appropriately defined in the metric measure setting, see \cite{BPSGaussGreen}) of the set of finite perimeter $E$. We obtain
\begin{align}
\nonumber \int_{F^{(1)}}\boldsymbol{\Delta}\phi&=-\int_{\mathcal{F}F}\left(\nabla\phi\cdot\nu_F\right)_{\mathrm{int}}\mathrm{d} \mathrm{Per}\\
\nonumber &\leq-\int_{E^{(1)}\cap\mathcal{F}\{\phi>s\}}(\nabla \phi\cdot \nu_{\{\phi>s\}})_{\mathrm{int}}\mathrm{d} \mathrm{Per}-\int_{\mathcal{F}E\cap \{\phi>s\}^{(1)}}\left(\nabla\phi\cdot\nu_E\right)_{\mathrm{int}}\mathrm{d}\mathrm{Per}\\
\nonumber &=-\mathcal{H}^{n-1}\left(E^{(1)}\cap\mathcal{F}\{\phi>s\}\right)-\int_{\mathcal{F}E\cap \{\phi>s\}^{(1)}}\left(\nabla\phi\cdot\nu_E\right)_{\mathrm{int}}\mathrm{d}\mathrm{Per}\\
&\leq -\mathcal{H}^{n-1}\left(E^{(1)}\cap\mathcal{F}\{\phi>s\}\right)+\mathcal{H}^{n-1}\left(\mathcal{F}E\cap \{\phi>s\}^{(1)}\right)\, .\label{eq:boundgap1}
\end{align}

The analogous computation with $\nabla\eta$ in place of $\nabla \phi$ and $G$ in place of $F$ yields to
\begin{equation}\label{eq:boundgap2}
\int_{G^{(1)}}\boldsymbol{\Delta}\eta\geq \mathcal{H}^{n-1}\left(E^{(0)}\cap \mathcal{F}\{\eta<t\}\right)-\mathcal{H}^{n-1}\left(\mathcal{F}E\cap \{\eta<t\}^{(1)}\right)\, .
\end{equation}
\medskip

Now, the bounds on $\boldsymbol{\Delta}\phi$ and $\boldsymbol{\Delta}\eta$ imply 
\begin{equation}\label{eq:boundlap1}
\int_{F^{(1)}}\boldsymbol{\Delta}\phi\geq (c+\varepsilon)\mathcal{H}^n(F)\,
\end{equation}
and
\begin{equation}\label{eq:boundlap2}
\int_{G^{(1)}}\boldsymbol{\Delta}\eta\leq (c-\varepsilon)\mathcal{H}^n(G)\, .
\end{equation}
Hence, by \eqref{eq:sepcontributions}, \eqref{eq:boundgap1}, \eqref{eq:boundgap2}, \eqref{eq:boundlap1} and \eqref{eq:boundlap2}
\begin{align*}
P(E_{s,t})-P(E)=& \left(P(E_{s,0},\Omega)-P(E,\Omega)\right)+\left(P(E_{0,t},\Omega)-P(E,\Omega)\right)\\
= &\left(\mathcal{H}^{n-1}\left(E^{(1)}\cap\mathcal{F}\{\phi>s\}\right)- \mathcal{H}^{n-1}\left(\mathcal{F}E\cap \{\phi>s\}^{(1)}\right)\right)\\
&+  \left(\mathcal{H}^{n-1}\left(E^{(0)}\cap \mathcal{F}\{\eta<t\}\right)-\mathcal{H}^{n-1}\left(\mathcal{F}E\cap \{\eta<t\}^{(1)}\right)\right)\\
\leq &\int_{G^{(1)}}\boldsymbol{\Delta}\eta-\int_{F^{(1)}}\boldsymbol{\Delta}\phi\\
\leq& (c-\varepsilon)\mathcal{H}^n(G)-(c+\varepsilon)\mathcal{H}^n(F)=-2\varepsilon\mathcal{H}^n(F)<0\, ,
\end{align*}
yielding to the sought contradiction.
\medskip

\textbf{Step 2.}
Let us see how to pass from the adimensional estimates in \eqref{eq:adimensional laplacian comparison} to the sharp Laplacian comparison in \eqref{eqn:Distributional}.

We will rely on the localization technique from \cite{CM17,CM20}. See also \cref{Localization} for an explanation of the localization technique in a slightly different scenario.
From  \cite[Corollary 4.16]{CM20}, we know that 
\begin{equation}\label{eqn:LaplacianF}
\boldsymbol{\Delta} f \llcorner  X\setminus \overline{E}= (\boldsymbol{\Delta} f)^{\text{reg}}\llcorner  X\setminus \overline{E}+ (\boldsymbol{\Delta} f)^{\text{sing}}\llcorner  X\setminus \overline{E}\, ,
\end{equation}
 where the singular part $(\boldsymbol{\Delta} f)^{\text{sing}}\perp \mathcal{H}^{n}$ satisfies $(\boldsymbol{\Delta} f)^{\text{sing}} \llcorner  X\setminus \overline{E}\leq 0$ and the regular part $(\boldsymbol{\Delta} f)^{\text{reg}} \ll \mathcal{H}^{n}$ admits the representation formula
\begin{equation}\label{eq:repDeltafreg}
(\boldsymbol{\Delta} f)^{\text{reg}} \llcorner  X\setminus \overline{E} = (\log h_{\alpha})' \mathcal{H}^{n} \llcorner  X\setminus \overline{E}\,.
\end{equation}
In \eqref{eq:repDeltafreg}, $Q$ is a suitable set of indices,  $(h_{\alpha})_{\alpha\in Q}$  are suitable densities defined on geodesics $(X_{\alpha})_{\alpha \in Q}$, which are essentially partitioning $X\setminus \overline{E}$ (in the smooth setting, $(X_{\alpha})_{\alpha\in Q}$ correspond to the integral curves of $\nabla \mathrm{d}_{E}$), such that the following disintegration formula holds: 
 \begin{equation}\label{eq:disintegration}
 \mathcal{H}^{n}\llcorner  X\setminus \overline{E}=\int_{Q} h_{\alpha} \mathcal{H}^{1} \llcorner  X_{\alpha}\,  {\mathfrak{q}}(\mathrm{d}\alpha)\,.
 \end{equation}
 
 The key point for the proof of Step 2 is that 
\begin{equation}\label{eq:DiffEqCDKN}
(\log h_{\alpha})''\leq - \frac{1}{n-1} \big( (\log h_{\alpha})' \big)^{2},
\end{equation}
in the sense of distributions and point-wise except countably many points.
Equivalently
\begin{equation}\label{eq:halphaCD}
\left(h_{\alpha}^{\frac{1}{n-1}}\right)''\leq 0\, ,
\end{equation}
in the sense of distributions. This is true because the space $X$ is a Ricci-limit space, and thus an essentially non-branching CD space.
Moreover, from \eqref{eq:repDeltafreg} and \eqref{eq:adimensional laplacian comparison} we know that 
\begin{equation}\label{eq:halpha'Kdist}
(\log h_{\alpha})'(\mathrm{d}_{\overline{E}}) \leq H \, \text{ a.e. on $X_{\alpha}$, for $\mathfrak{q}$-a.e. $\alpha\in Q$}\, .
\end{equation}

The sharp estimates in \eqref{eqn:Distributional} then follow from the standard Riccati comparison applied to the functions $v(r):=\left(h_{\alpha}(r)/h_{\alpha}(0)\right)^{\frac{1}{N-1}}$ which verify
\begin{equation}
v''\leq 0\, ,    
\end{equation}
in the sense of distributions by \eqref{eq:halphaCD}, $v(0)=1$ and $v'(0)\leq H/(N-1)$ by \eqref{eq:halpha'Kdist}; and \eqref{eqn:LaplacianF}. Finally notice that $H\geq 0$, otherwise the bound 
\[
\Delta\mathrm{d}_{\overline{E}} \leq \frac{H}{1+\frac{H}{n-1}\mathrm{d}_{\overline E}}
\]
degenerates in finite time, contradicting the fact that $E$ is bounded, and the space is non-compact.
\end{proof}
}

\subsection{Isoperimetric inequality on non-negatively curved manifolds with Euclidean volume growth}

On a non-compact manifold $(M^n,g)$ with $\mathrm{Ric}\geq 0$ one can define, for $p\in M$,
\[
\mathrm{AVR}(M):=\lim_{r\to\infty}\frac{|B_r(p)|}{\omega_n r^n}.
\]
The definition does not depend on $p$ and it is well-posed by the Bishop--Gromov monotonicity formula \eqref{eqn:BishopGromov}. Notice that, by Bishop--Gromov monotonicity, $\mathrm{AVR}>0$ implies $\inf_{x\in M}|B_1(x)|\geq \omega_n\mathrm{AVR}>0$.

Manifolds with $\mathrm{Ric}\geq 0$ and $\mathrm{AVR}>0$ are asymptotically conical in the GH sense, see \cite{ChCo0}. More precisely, given $r_i\to \infty$, and $p\in M$, the rescaled metric space $(M,r_i^{-1}\mathrm{d},p)$ pG converges, up to subsequences, to a space $(C(X),\mathrm{d}_\infty,o)$ that is a \emph{metric cone} on a metric space with $\mathrm{diam}(X)\leq \pi$. Any such a space is called \emph{asymptotic cone} of $M$. 

Recall that, given a metric space $(X,\mathrm{d})$ with $\mathrm{diam}(X)\leq \pi$, the \emph{metric cone over $X$}, denoted by $C(X)$, is the space $[0,+\infty)\times X$, where $\{0\}\times X$ is collapsed to a point, endowed with the distance
\[
\mathrm{d}_C((a,x_1),(b,x_2)):=\sqrt{a^2+b^2-2ab\cos(\mathrm{d}(x_1,x_2))}.
\]

The asymptotic cone might depend on the specific sequence $r_i\to \infty$ when $\mathrm{Ric}\geq 0$, see Perelman \cite{PerelmanExampleCones}, or Colding--Naber \cite{ColdingNaber}. However, if $\mathrm{Sect}\geq 0$, the cone is independent on the sequence, see Ballman--Gromov--Schroeder \cite{BallmannGromovSchroeder85}. 

By Bishop--Gromov monotonicity formula $\mathrm{Ric}\geq 0$ implies $|B_r(p)|\leq \omega_nr^n$ for every $p\in M$ and $r>0$. Hence, manifolds with $\mathrm{Ric}\geq 0$, and $\mathrm{AVR}>0$ have maximal volume growth at infinity. On the other side of the spectrum, since $\mathrm{Ric}\geq 0$ always implies $|B_r(p)|\geq C(p)r$ (by a result of Calabi--Yau) for every $r>1$, we have manifolds such that $|B_r(p)|\sim r$ as $r\to \infty$. Those manifolds are, in a sense that can be made precise, \emph{asymptotically cylindrical}. In this note we will not give more details on the structure of the latter class of spaces, on which it is also interesting to study the isoperimetric problem. Such a study has been carried over in A--Po \cite{AP23}, and Zhu \cite{XingyuZhu}.

Now, we are ready to state a sharp isoperimetric inequality on manifolds with $\mathrm{Ric}\geq 0$ and $\mathrm{AVR}>0$. We postpone the proof to next lecture.

\begin{theorem}[Agostiniani--Fogagnolo--Mazzieri \cite{AgostinianiFogagnoloMazzieri}, Brendle \cite{BrendleFigo}, Fogagnolo--Mazzieri \cite{FogagnoloMazzieri}, Balogh--Krist\'aly \cite{BaloghKristaly}, A--Pa--Po--S \cite{ConcavitySharpAPPS2}, Cavalletti--Manini \cite{CavallettiManiniRigidityIsop}]\label{sharpisop}
Let $(M^n,g)$ be a smooth complete non-compact Riemannian manifold with $\mathrm{Ric}\geq 0$ and $\mathrm{AVR}>0$. Let $E$ be a set of finite perimeter, and with finite volume. Then 
\begin{equation}\label{eqn:SharpIsopIneq}
P(E)\geq n(\omega_n\mathrm{AVR}(M))^{1/n}|E|^{\frac{n-1}{n}}.
\end{equation}
Moreover, if equality holds, then $M$ is isometric to $\mathbb R^n$, and there is a ball $B_r(p)\subset \mathbb R^n$ such that $|E\Delta B_r(p)|=0$.
\end{theorem}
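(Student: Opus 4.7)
The plan is to establish the inequality volume-by-volume by combining the concentration–compactness dichotomy of \cref{concentrationcompactness} with the concavity of $r\mapsto|E_r|^{1/n}$ proved in \cref{MeanCurvatureMonotonicity}. Fix $v\in(0,\infty)$; it suffices to prove $I_M(v)\geq n(\omega_n A)^{1/n}v^{(n-1)/n}$, with $A:=\mathrm{AVR}(M)$. By \cref{concentrationcompactness} there exists an $n$-dimensional non-collapsed Ricci limit $(Y,\mathrm{d}_Y,\mathcal{H}^n)$ with $\mathrm{Ric}\geq 0$ (either $Y=M$ itself, or a pmGH limit at diverging points) hosting an isoperimetric set $E\subset Y$ of volume $v$ with $P_Y(E)=I_Y(v)=I_M(v)$. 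Moreover $\mathrm{AVR}(Y)\geq A$: indeed, \eqref{eqn:BishopGromov} gives the uniform lower bound $|B_r(x)|\geq A\omega_n r^n$ on $M$, and this passes to the limit via volume convergence to yield the same bound on every ball of $Y$.

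Next, apply the concavity statement of \cref{MeanCurvatureMonotonicity} to $E\subset Y$: the function $g(r):=|E_r|^{1/n}$ is concave and non-decreasing on $[0,\infty)$. Since $E$ is bounded in $Y$, choose $y_0\in E$ and $R>0$ with $E\subset B_R(y_0)$; the inclusions $B_{r-R}(y_0)\subset E_r\subset B_{r+R}(y_0)$ combined with $|B_\rho(y_0)|/(\omega_n\rho^n)\to\mathrm{AVR}(Y)$ as $\rho\to\infty$ give
\[
\lim_{r\to\infty}\frac{g(r)}{r}=\bigl(\omega_n\mathrm{AVR}(Y)\bigr)^{1/n}.
\]
By concavity on $[0,\infty)$, the right derivative at zero dominates the asymptotic slope, $g'(0^+)\geq(\omega_n\mathrm{AVR}(Y))^{1/n}$, since the tangent line $g(0)+g'(0^+)r$ lies above $g$ and hence $g(r)/r\leq g(0)/r+g'(0^+)\to g'(0^+)$. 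The Minkowski-content identity for sets of finite perimeter (sharp for isoperimetric sets) identifies $g'(0^+)$ with $\tfrac{1}{n}P_Y(E)\,v^{(1-n)/n}$, and rearranging gives
\[
I_M(v)=P_Y(E)\geq n\bigl(\omega_n\mathrm{AVR}(Y)\bigr)^{1/n}v^{(n-1)/n}\geq n(\omega_n A)^{1/n}v^{(n-1)/n}.
\]

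For rigidity, equality for some $E\subset M$ with $|E|=v$ forces $E$ itself to be an isoperimetric set in $M$, so the argument applies with $Y=M$ and every step must saturate. In particular, $g$ is affine on $[0,\infty)$ with slope $(\omega_n A)^{1/n}$ and $|E_r|=((\omega_n A)^{1/n}r+v^{1/n})^n$; combined with the Bishop--Gromov lower bound $|B_r(y_0)|\geq A\omega_n r^n$ and the inclusion $B_r(y_0)\subset E_r$, this pins down the asymptotic volume growth from inside and outside. One then invokes the equality cases of \eqref{eqn:Distributional} (or the volume-cone theorem built on the rigidity in \eqref{eqn:BishopGromov}) to deduce that $M$ is a metric cone of AVR one, hence isometric to $\mathbb{R}^n$, and $E$ is a Euclidean ball by the Euclidean isoperimetric rigidity.

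The main obstacle, in my view, is the concavity of $r\mapsto|E_r|^{1/n}$ in \cref{MeanCurvatureMonotonicity} itself: proving it in non-smooth ambient spaces relies on the sharp distributional Laplacian bound \eqref{eqn:Distributional} and on the localization technique (see \cref{Localization}), which constitute the genuine technical input. Once those are in hand, the reduction above is a clean ODE/concavity argument. A more minor subtlety is the rigidity step, where one must quote the correct form of Bishop--Gromov rigidity for non-compact non-collapsed Ricci limits in order to upgrade equal volume growth into a full isometry with $\mathbb{R}^n$.
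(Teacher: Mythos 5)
Your proposal is correct and follows essentially the same route as the paper: both reduce to an isoperimetric set in a non-collapsed Ricci-limit space via \cref{concentrationcompactness}, both draw their substance from \cref{MeanCurvatureMonotonicity}, and both use $\mathrm{AVR}(Y)\geq\mathrm{AVR}(M)$ from Bishop--Gromov. The only variation is in the bookkeeping: where the paper invokes the monotonicity of the enlargement ratio \eqref{eq:enlargement-ratio} together with a blow-down to the asymptotic cone and lower semicontinuity of the perimeter, you work directly with the concavity of $g(r)=|E_r|^{1/n}$, extract the asymptotic slope by sandwiching $E_r$ between balls, and compare with $g'(0^+)$; these two readings of \cref{MeanCurvatureMonotonicity} are equivalent (the derivative of $g$ is a power of the enlargement ratio), and your sandwiching bypasses the need to pass perimeters to the blow-down limit, which is a small simplification. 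Two points worth being explicit about: the identification $g'(0^+)=\tfrac{1}{n}P_Y(E)\,v^{(1-n)/n}$ needs the fact that for the chosen (open) representative of the isoperimetric set the outer Minkowski content equals the perimeter (not merely dominates it), which is also implicitly used by the paper; and in the rigidity sketch the logical order is slightly reversed --- one first deduces from the saturated inequalities that $M$ is a metric cone, then uses smoothness of $M$ to conclude $M\cong\mathbb R^n$ (and only then $\mathrm{AVR}=1$), rather than first obtaining $\mathrm{AVR}=1$.
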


\begin{remark}[On \cref{sharpisop} in more general spaces]
The proof of \cref{sharpisop} given in A--Pa--Po--S works in the more general setting of Alexandrov spaces with $\mathrm{Sect}\geq 0$, or even $\mathrm{RCD}(0,n)$ spaces, see also A-Pa-Po-V \cite{APPV23}, and \cite{CavallettiManiniRigidityIsop}. In such a larger class, the rigidity is reached on metric cones over spaces $X$ with a lower bound $\geq 1$ on the curvature, understood in the appropriate weak sense. 
\end{remark}

\newpage 

\section{Fourth Lecture}
\subsection{Proof of the sharp isoperimetric inequality on $\mathrm{Ric}\geq 0$ and $\mathrm{AVR}>0$}
\begin{proof}[Proof of \cref{sharpisop}]
    We only prove the inequality. We fix $v>0$, and we want to show that for every set of finite perimeter $E$ with $|E|=v$, \eqref{eqn:SharpIsopIneq} holds. We discuss two cases.
    \smallskip

    \textbf{Case 1}.  There is an isoperimetric set $E$ with volume $v$. By \eqref{eq:enlargement-ratio} we have
    \[
        \frac{P(E_r)^{\frac{n}{n-1}}}{|E_r|} \leq 
        \frac{P(E)^{\frac{n}{n-1}}}{|E|},
    \]
    for every $r\geq 0$. Let $r_i\to \infty$, fix $o\in M$, and let us consider the sequence of rescaled manifolds $(M,\mathrm{d}_i:=r_i^{-1}\mathrm{d},o)$ which converges (up to subsequence) to a metric cone $(C,\mathrm{d}_\infty,c)$. Notice that 
    \[
    \chi_{E_{r_i}}\mathcal{H}^n_{\mathrm{d}_i} \rightharpoonup \chi_{B_1^{\mathrm{d}_\infty}(c)}\mathcal{H}^n_{\mathrm{d}_\infty},
    \]
    in duality with continuous bounded functions in a realization of the pmGH convergence. Thus, by lower-semicontinuity of the perimeter (see \cite{AmborsioBrueSemola19}), by the convergence of volume measures, and by the fact that $|B_r^{\mathrm{d}_\infty}(c)|=\omega_n r^n\mathrm{AVR}(M)$ for every $r\geq 0$, we have 
    \begin{equation}\label{eqn:SharpIsopInequality}
    n^{\frac{n}{n-1}}\left(\omega_n\mathrm{AVR}(M)\right)^{1/(n-1)}=\frac{P_{C}(B_1^{\mathrm{d}_\infty}(c))^{\frac{n}{n-1}}}{|B_1^{\mathrm{d}_\infty}(c)|}\leq \liminf_{i\to\infty}\frac{P(E_{r_i})^{\frac{n}{n-1}}}{|E_{r_i}|} \leq 
        \frac{P(E)^{\frac{n}{n-1}}}{|E|}.
    \end{equation}
    Hence, since $P(\tilde E)\geq P(E)$ for every set of finite perimeter $\tilde E$ with $|\tilde E|=|E|=v$, we get the sought inequality.
    \medskip

    \textbf{Case 2.} There is not an isoperimetric set $E\subset M$ with volume $v$. Hence by \cref{concentrationcompactness}, there is a non-collapsed Ricci limit space at infinity $(X,\mathrm{d},x)$ and an isoperimetric set $E\subset X$, with $|E|_X=v$, such that
   \[
   P_X(E)=I_X(v)=I_M(v).
   \]
   Then, arguing as in \eqref{eqn:SharpIsopInequality} on $X$, and using the latter equalities, we have that for every set of finite perimeter $\tilde E\subset M$ with $|\tilde E|_M=|E|_X=v$ we have 
   \[
   P_M(\tilde E)\geq I_M(v)=P_X(E)\geq n^{\frac{n}{n-1}}\left(\omega_n\mathrm{AVR}(X)\right)^{1/(n-1)}|E|_X,
   \]
   and the proof is concluded because $\mathrm{AVR}(X)\geq \mathrm{AVR}(M)$, see \cref{IXIM}.
\end{proof}

\begin{remark}(About the rigidity of \cref{sharpisop})
An idea to prove the rigidity case of \cref{sharpisop}: when equality holds, then $P(E_r)^{\frac{n}{n-1}}/|E_r|$ is constant in $r\in (0,\infty)$. Hence, by \eqref{eqn:ControlHt}, we get that $\mathrm{Ric}(\nu_r,\nu_r)=0$ and $\partial E_r$ is totally umbilic. Then $M$ is a metric cone outside $\partial E$. With additional effort (see \cite{ConcavitySharpAPPS2} for the precise proof) one can show that $M$ must be a metric cone, and $E$ is a ball centered at one of the tips. Since $M$ is smooth it must be $\mathbb R^n$, and $E$ must be a ball.
\end{remark}

{
Let us give the proof of the inequality in \cref{sharpisop} as it is given in \cite{BaloghKristaly}. The following proof works for arbitrary $\mathrm{CD}(0,n)$ spaces with $\mathrm{AVR}>0$.

\begin{proof}[Proof of \cref{sharpisop} \cite{BaloghKristaly}]
Let us fix a bounded Borel set $E$ with positive finite volume. Let $D:=\diam E$. If in \cref{BMonCD0n} we set $A_0:=E$, and $A_1:=B_r(x_0)$, for $x_0\in E$ and $r>0$, then we have, using the notation $A_t$ in there,
\[
A_t \subset E_{t(r+D)}:=\{x\in M^n: \mathrm{d}(x,E)\leq t(r+D)\}.
\]
Thus, recalling that $\mathcal{M}(E)$ is the Minkowski content of $E$, we have, for $r>0$ fixed, using \cref{BMonCD0n},
\[
\begin{aligned}
    \mathcal{M}(E)&:=\liminf_{\varepsilon\to  0} \frac{|E_\varepsilon|-|E|}{\varepsilon} = \liminf_{t\to  0} \frac{|E_{t(r+D)}|-|E|}{t(r+D)} \geq \liminf_{t\to  0} \frac{|A_t|-|E|}{t(r+D)}   \\
    &\geq \liminf_{t\to  0} \frac{\left((1-t)|E|^{1/n}+t|B_r(x_0)|^{1/n}\right)^n-|E|}{t(r+D)} = \frac{n\left(|B_r(x_0)|^{1/n}-|E|^{1/n}\right)|E|^{(n-1)/n}}{(r+D)}.
\end{aligned}
\]
If we now let $r\to \infty$ in the previous inequality we get
\[
\mathcal{M}(E)\geq n\left(\omega_n\mathrm{AVR}(M)\right)^{1/n}|E|^{\frac{n-1}{n}}.
\]
Finally, to pass from the Minkowski content to the perimeter in the above inequality (thus concluding the proof), we need to use the fact that the lower semicontinuous relaxation of $\mathcal{M}(\cdot)$ with respect to the convergence in measure (on finite measure sets) is the perimeter, see again \cite{ADMG17}.
\end{proof}
}

\begin{remark}[About other proofs of \eqref{eqn:SharpIsopIneq}]
    There are other proofs of \eqref{eqn:SharpIsopIneq} in the smooth setting. Agostiniani--Fogagnolo--Mazzieri \cite{AgostinianiFogagnoloMazzieri}, and Fogagnolo--Mazzieri \cite{FogagnoloMazzieri} use curvature flow techniques, and they deduce the isoperimetric inequality from the Willmore inequality using an argument due to Huisken. Brendle \cite{BrendleFigo} gives a proof using the ABP method, inspired by a former argument of Cabrè. All the previous proofs obtain rigidity when additional assumptions on the smoothness of $\partial E$ are assumed.

    In A--Pa--Po--S \cite{ConcavitySharpAPPS2}, we obtain the rigidity in the case $M$ is a smooth manifold without asking further assumptions on the regularity of $\partial E$, and we carry the proof in the non-smooth setting of $\mathrm{RCD}(0,n)$ spaces (see also \cite{APPV23}) along the lines of the proof of \cref{sharpisop} presented above. For the rigidity case in the setting of essentially non-branching $\mathrm{CD}(0,n)$ spaces, see \cite{CavallettiManiniRigidityIsop}; for a proof of the sharp isoperimetric inequality on $\mathrm{MCP}(0,n)$ spaces, see \cite{CM22}.
\end{remark}

For what concerns the behavior of the isoperimetric profile in the case of manifolds with linear volume growth, one can ask the following question. Observe that, by A--Po \cite{AP23}, \eqref{eqn:Equivalence} holds if one assumes $\mathrm{Sect}\geq 0$ instead of $\mathrm{Ric}\geq 0$. Moreover, by \cite{XingyuZhu}, $\Leftarrow$ always holds in the following question.
\begin{question}
    Let $(M^n,g)$ be a smooth complete non-compact Riemannian manifold with $\mathrm{Ric}\geq 0$ and $\inf_{x\in M} |B_1(x)|>0$. Fix $p\in M$. Understand whether the following equivalence holds:
    \begin{equation}\label{eqn:Equivalence}
    \sup_{v\in (0,\infty)}I_M(v)<\infty \Leftrightarrow \limsup_{r\to\infty}\frac{|B_r(p)|}{r}<\infty.
    \end{equation}
\end{question}

\subsection{Isoperimetric sets on non-negatively curved manifolds}
In this section we discuss how to improve \cref{concentrationcompactness} to existence of isoperimetric sets in some cases, when the ambient manifold has non-negative curvature.
\begin{theorem}[Ritoré \cite{RitoreExistenceSurfaces01}]\label{ExistenceSurface}
    Let $(M^2,g)$ be a smooth complete non-compact Riemannian surface with $\mathrm{Sect}\geq 0$. Then, for every $v>0$, there exists an isoperimetric set with volume $v$ in $M$.
\end{theorem}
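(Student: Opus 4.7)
The approach I would take is to apply the concentration--compactness principle (\cref{concentrationcompactness}) to a minimizing sequence and then exploit the finite total curvature of a non-negatively curved surface to exclude the escaping scenario.

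First, I would start with a minimizing sequence $(E_i)$ of sets of volume $v$ with $P(E_i) \to I_M(v)$. Since $\mathrm{Sect}\geq 0$ on a surface is equivalent to $\mathrm{Ric}\geq 0$, and one checks that $\inf_{x\in M}|B_1(x)|>0$ via Bishop--Gromov, \cref{concentrationcompactness} applies: either there is an isoperimetric set of volume $v$ in $M$, in which case we are done, or there are diverging basepoints $x_i$ such that $(M,\mathrm{d},x_i,\mathcal{H}^2)$ converges in pmGH to a non-collapsed Ricci--limit $(X,\mathrm{d},x)$ carrying an isoperimetric set $F$ with $|F|_X=v$ and $P_X(F)=I_X(v)=I_M(v)$.

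To rule out the escaping scenario, I would invoke Cohn--Vossen's inequality $\int_M K\,\mathrm{d}\mathcal{H}^2\leq 2\pi\chi(M)<\infty$, which forces the Gaussian curvature of $M$ to be concentrated in a compact set. Together with the soul theorem in dimension two, this implies that any pmGH-limit based at a diverging sequence of points is a flat complete $2$-dimensional Alexandrov space: either the Euclidean plane, a flat cylinder $\mathbb{S}^1(L)\times\mathbb{R}$, or (in the non-orientable case) a flat Möbius strip. In each such model, isoperimetric sets exist explicitly (Euclidean disks, or translation-invariant slabs in the cylinder), so $F$ is of this explicit form. The asymptotic flatness means that for every $\varepsilon>0$ there is a far-away region $U\subset M\setminus B_{R_\varepsilon}(o)$ whose metric is $\varepsilon$-close in $C^0$-GH sense to the corresponding region in $X$. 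Transporting $F$ through this near-isometry produces $\tilde F\subset M$ with $|\tilde F|=v$ and $P(\tilde F)\leq I_X(v)+\varepsilon = I_M(v)+\varepsilon$. This yields a new minimizing sequence with uniformly bounded supports, which by \cref{precompactness} in the preliminary list converges strongly to an isoperimetric set in $M$.

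The hard part will be making the transplantation step rigorous: while $M$ is asymptotically flat in an averaged sense, one needs uniform $C^0$ (or better) convergence to Euclidean geometry on balls of fixed size far from the soul, in order to carry $F$ back and forth between $X$ and $M$ with perimeter distortion controlled by $\varepsilon$. This requires quantitative Cheeger--Colding-type estimates on the non-collapsed convergence, together with a careful choice of the far-away region that avoids both the soul and any thin necks. A secondary delicate point is the case where the limit cone has a genuine vertex (total angle strictly less than $2\pi$): minimizers in the limit are then centered at the vertex, which has no natural preimage in $M$, and one must instead argue by strict subadditivity of $I_M$ --- coming from the strict concavity of $I_M^2$ enforced by the presence of positive curvature somewhere --- to preclude mass splitting at volume $v$.
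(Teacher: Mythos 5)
Your overall scaffolding---concentration--compactness, analysis of the pmGH limit at infinity, exploit nonnegative curvature in dimension two---is the same as the paper's, and you correctly anticipate that the limit must be a flat surface. But the way you try to close the argument diverges from the paper's and contains a real gap.

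The gap is the transplantation step. Pointed (measured) Gromov--Hausdorff convergence by itself provides only approximate isometries with metric distortion controlled on large balls; it does not give bi-Lipschitz maps, and it certainly does not let you carry a set of finite perimeter from $X$ back into a far-away region of $M$ with volume preserved and perimeter increased by at most $\varepsilon$. To make that work you would need quantitative $C^1$ (or at least $W^{1,p}$) harmonic-coordinate control far from the soul, which in turn requires injectivity radius lower bounds as well as pointwise curvature decay, neither of which follows merely from Cohn--Vossen's integral bound. You acknowledge this yourself, but it is not a ``hard part'' to be filled in later: as written, the step does not follow, and the rest of the proof collapses without it.

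The paper avoids transplantation entirely. Having established $I_M(v)=I_X(v)$, it does \emph{not} move the limit minimizer back to $M$; instead it produces explicit competitors in $M$ whose perimeter is already bounded by $I_X(v)$, forcing equality. Concretely, the paper first reduces to the one-ended case (via Cheeger--Gromoll splitting on $M$ itself), then notes that because $\mathrm{Sect}\geq 0$ any pmGH limit at diverging basepoints splits a line, so $X=\mathbb R\times Y$ with $Y\cong\mathbb R$ or $Y\cong\mathbb S^1(r)$. If $Y\cong\mathbb R$, the Bishop--Gromov inequality for perimeters of geodesic balls gives $P(B_{r_v}(p))\leq I_{\mathbb R^2}(v)=I_M(v)$ for a ball of volume $v$ in $M$, so that ball is isoperimetric --- contradiction. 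If $Y\cong\mathbb S^1(r)$ and $E$ is a slab, $I_M(v)=2|\mathbb S^1(r)|$, while sublevel sets of a Busemann function in $M$ have perimeter increasing to $|\mathbb S^1(r)|$, producing a competitor of volume $v$ with strictly smaller perimeter --- contradiction. These competitor constructions replace your transplantation and make the argument rigorous without any quantitative convergence input.

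A secondary point: your worry about a ``genuine vertex'' in the limit (total angle $<2\pi$) cannot occur. The pmGH limit along a diverging sequence of basepoints is not the asymptotic cone; it contains a line and therefore splits, so it is always $\mathbb R\times Y$ with $Y$ one-dimensional, never a cone with a conical singularity. You are conflating the blow-down with the limit at fixed scale. Relatedly, the paper obtains flatness of the limit not through Cohn--Vossen and the soul theorem but directly from the Toponogov/Cheeger--Gromoll splitting applied to the limit; this is also why the Möbius-strip case you mention does not appear (it does not split as a metric product).
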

\begin{proof}
    We sketch the proof following the alternative argument given in \cite{AP23}, where the result above is generalized to $2$-dimensional Alexandrov spaces. See \cref{ExCompleteAP}.
    
    First we can assume that $M$ has only one end, otherwise, by Cheeger--Gromoll splitting theorem, either $M\cong \mathbb R^2$ or $M\cong \mathbb R\times \mathbb S^1(r)$, where $\mathbb S^1(r)$ is a circle of radius $r>0$. In those cases isoperimetric sets exist for every volume.

    Then, one has\footnote{This is not immediate (see \cite{AP23}), and might not be true when $n\geq 3$, see the counterexamples in \cite{CrokeKarcher}.} $\inf_{x\in M}|B_1(x)|>0$. Let $v>0$ be a volume for which, by contradiction, there is no isoperimetric set. Hence, by \cref{concentrationcompactness}, there is a Ricci-limit space $(X,\mathrm{d})$ at infinity and an isoperimetric set $E\subset X$ with volume $v>0$ such that $P_X(E)=I_X(|E|)=I_M(v)$.

    Notice that one has\footnote{Here, $\mathrm{Sect}\geq 0$ is important, see \cite{AP23} It might be false when $n\geq 3$ and one only assumes $\mathrm{Ric}\geq 0$.} $X=\mathbb R\times Y$. Finally, we have two cases:
    \begin{enumerate}
        \item $Y\cong \mathbb R$. Picking a ball $B_{r_v}(p)\subset M$ such that $|B_{r_v}(p)|=v$, we have 
        \[
        I_{\mathbb R^2}(v)=I_M(v) \leq P(B_{r_v}(p)) \leq I_{\mathbb R^2}(v),
        \]
        where the last inequality comes from the Bishop--Gromov monotonicity formula \eqref{eqn:BishopGromov}. Hence, equality holds everywhere, and $B_{r_v}(p)$ is an isoperimetric set, reaching a contradiction.
        \item $Y\cong \mathbb S^1(r)$, for some $r>0$. Hence, if $v\ll r$ the isoperimetric set $E\subset X$ is a Euclidean ball and one concludes as in the previous item. Otherwise $E$ is a slab in $\mathbb R\times\mathbb S^1(r)$, and then $P_X(E)=2|\mathbb S^1(r)|$. Then, take any ray $\gamma$ in $M$ issuing from one point $o$. Consider $b_\gamma$ the Busemann function related to $\gamma$. One has, as $s\to\infty$,
        \[
        P(\{b_\gamma\leq s\})\nearrow |\mathbb S^1(r)|.
        \]
        Then there is $s$ for which $|\{b_\gamma\leq s\}|=v$, and 
        \[
        2|\mathbb S^1(r)|=I_{\mathbb R\times \mathbb S^1(r)}(v)=I_M(v)\leq P(\{b_\gamma\leq s\}) \leq |\mathbb S^1(r)|,
        \]
        resulting in a contradiction.\qedhere
    \end{enumerate}

\end{proof}

\begin{theorem}[A--B--F--Po \cite{AntonelliBrueFogagnoloPozzetta}]\label{ABFP}
    Let $(M^n,g)$ be a smooth complete non-compact Riemannian manifold with $\mathrm{Sect}\geq 0$ and $\mathrm{AVR}>0$. Then isoperimetric sets exist for every sufficiently large volume.
\end{theorem}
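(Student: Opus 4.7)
The plan is to argue by contradiction. Suppose there exists a sequence $v_i\to\infty$ such that no isoperimetric set of volume $v_i$ exists in $M$. By \cref{concentrationcompactness}, for each $i$ one obtains diverging base points $x_i\in M$, a non-collapsed pmGH-limit $(X_i,\mathrm{d}_i,x_i,\mathcal{H}^n)$ of $(M,\mathrm{d},x_i,\mathcal{H}^n)$, and an isoperimetric set $E_i\subset X_i$ with $|E_i|=v_i$ and $P_{X_i}(E_i)=I_M(v_i)$. Since $\mathrm{Sect}\geq 0$ implies $\mathrm{Ric}\geq 0$ and $x_i\to\infty$ in $M$, two rays at $x_i$ (going in roughly opposite directions inside $M$) merge in the limit to a line through $x_i$, and the Cheeger--Colding splitting theorem yields $X_i=\mathbb R\times Y_i$ for a non-collapsed $(n-1)$-dimensional limit space $Y_i$. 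Because $X_i$ is a blow-down of $M$ at infinity, $\mathrm{AVR}(X_i)=\mathrm{AVR}(M)>0$.

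Next, I would establish the asymptotic sharpness of $I_M$: for geodesic balls $B_{r_v}(p)\subset M$ with $|B_{r_v}(p)|=v$, Bishop--Gromov monotonicity combined with $\mathrm{AVR}(M)>0$ gives $P(B_{r_v}(p))/v^{(n-1)/n}\to n(\omega_n\mathrm{AVR}(M))^{1/n}$, and combined with the sharp lower bound \cref{sharpisop} this forces
\[
\lim_{v\to\infty}\frac{I_M(v)}{v^{(n-1)/n}}=n(\omega_n\mathrm{AVR}(M))^{1/n}.
\]

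Now rescale and blow down: set $\lambda_i:=v_i^{-1/n}$, pick base points $p_i\in E_i$ adapted to the product structure of $X_i$, and consider $(X_i,\lambda_i\mathrm{d}_i,p_i,\lambda_i^n\mathcal{H}^n)$. The sets $\lambda_iE_i$ have unit rescaled volume, and their rescaled perimeter $\lambda_i^{n-1}I_M(v_i)$ converges to $n(\omega_n\mathrm{AVR}(M))^{1/n}$ by the asymptotic sharpness. Precompactness of pointed non-collapsed $\mathrm{RCD}(0,n)$ spaces under the uniform bound $\mathrm{AVR}(X_i)=\mathrm{AVR}(M)>0$ yields a subsequential limit $(X_\infty,p_\infty)$ with $\lambda_iE_i\to E_\infty$ in $L^1_{\mathrm{loc}}$ and $|E_\infty|=1$. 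The product structure passes to the limit: $X_\infty=\mathbb R\times Y_\infty$ (the $\mathbb R$-factor is preserved pointwise by rescaling) with $\mathrm{AVR}(X_\infty)=\mathrm{AVR}(M)$. Perimeter lower semicontinuity and \cref{sharpisop} on $X_\infty$ force $P_{X_\infty}(E_\infty)=n(\omega_n\mathrm{AVR}(M))^{1/n}$, so equality is attained in the sharp isoperimetric inequality on $X_\infty$. By its rigidity in the $\mathrm{RCD}(0,n)$ setting (A--Pa--Po--S and Cavalletti--Manini), $X_\infty$ must be a metric cone and $E_\infty$ a geodesic ball centered at a tip.

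The contradiction arises from the incompatibility between the product structure and a non-flat cone. A non-Euclidean metric cone has a \emph{unique} tip, characterized as the only point whose tangent cone is the entire space, all other points having tangent cones of the form $\mathbb R\times(\,\cdot\,)$. However $X_\infty=\mathbb R\times Y_\infty$ is $\mathbb R$-translation invariant, so the $\mathbb R$-orbit of any putative tip produces infinitely many distinct tips, which is impossible unless the cone is flat. Hence $X_\infty=\mathbb R^n$, forcing $\mathrm{AVR}(M)=1$, whence by Bishop--Gromov rigidity $M$ is isometric to $\mathbb R^n$; but in $\mathbb R^n$ isoperimetric sets (Euclidean balls) exist at every volume, contradicting the no-existence hypothesis. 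The main obstacle I expect is the non-smooth rigidity of \cref{sharpisop} in the $\mathrm{RCD}(0,n)$ setting together with the "tip-uniqueness" argument in non-Euclidean cones; a subsidiary technical point is choosing $p_i$ so that the line splitting passes through $p_\infty$ and a definite fraction of $\lambda_iE_i$ stays within bounded rescaled distance.
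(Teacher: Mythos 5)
Your proposal takes a genuinely different route from the paper. The paper's argument first reduces (via Cheeger--Gromoll splitting) to the case where $M$ does not split a line, then invokes the key lemma from \cite{AntonelliBrueFogagnoloPozzetta} that, under $\mathrm{Sect}\geq 0$ and no line-splitting, there is a fixed $\varepsilon>0$ with $\mathrm{AVR}(X)\geq\mathrm{AVR}(M)+\varepsilon$ for \emph{every} limit space $X$ at infinity; this $\varepsilon$-gap, combined with the asymptotics $I_M(v)\sim n(\omega_n\mathrm{AVR}(M))^{1/n}v^{(n-1)/n}$ and the sharp isoperimetric inequality on $X$, yields an immediate numerical contradiction. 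You try to avoid the $\varepsilon$-gap by rescaling and invoking rigidity of the sharp isoperimetric inequality. This is a reasonable idea, but it contains a fatal gap.

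The gap is your ``tip-uniqueness'' step. It is false that a metric cone which splits a line must be Euclidean. If $W$ is any $\mathrm{RCD}(n-3,n-2)$ space with diameter $\leq\pi$, then $\mathbb R\times C(W)$ is itself a metric cone (it is $C(\mathbb S^0 * W)$, the cone over the spherical join). It splits a line, it has an entire line's worth of ``tips'' $\mathbb R\times\{\mathrm{tip}\}$, and yet it equals $\mathbb R^n$ only if $C(W)=\mathbb R^{n-1}$. So the existence of infinitely many tips is perfectly compatible with being a non-flat cone: the rigidity statement you invoke (A--Pa--Po--S, Cavalletti--Manini) says $E_\infty$ is a ball centered at \emph{a} tip, and the ball is not translation-invariant, so there is no contradiction with the $\mathbb R$-symmetry. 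Your conclusion ``$X_\infty=\mathbb R^n$'' therefore does not follow, and the chain ``$\mathrm{AVR}(M)=1\Rightarrow M=\mathbb R^n$'' never gets off the ground. Note also that your proposal only ever uses $\mathrm{Sect}\geq 0$ through the implication $\mathrm{Ric}\geq 0$; if the argument were sound it would prove the theorem under $\mathrm{Ric}\geq 0$ and $\mathrm{AVR}>0$ alone, which is an open problem posed later in these notes --- a strong signal that something must be wrong. There are secondary issues as well: the assertion ``$\mathrm{AVR}(X_i)=\mathrm{AVR}(M)$ because $X_i$ is a blow-down'' is incorrect, since the $X_i$ from \cref{concentrationcompactness} are pmGH limits at the \emph{same scale} at diverging base points (one only gets $\mathrm{AVR}(X_i)\geq\mathrm{AVR}(M)$, possibly strictly, and the paper's whole point is to exploit strictness); the line-splitting of $X_i$ from ``two rays in roughly opposite directions'' is asserted without proof and is not automatic, since a far-out point of a manifold with a soul need not have two rays with angle $\pi$ between them; and the no-mass-escape step $|E_\infty|=1$ is flagged but not resolved.
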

\begin{proof}
    We give a sketch, see \cref{DetailsABFP}. By Cheeger--Gromoll's splitting theorem we have $M\cong \mathbb R^k\times X$, with $0\leq k\leq n$, and $X$ does not split a line. By arguing on $X$ we can assume without loss of generality $k=0$. 
    
    Since $M$ does not split a line, and $\mathrm{Sect}\geq 0$, then the (unique) asymptotic cone of $M$ does not split any line, see \cite{BallmannGromovSchroeder85}. Hence (see \cite{AntonelliBrueFogagnoloPozzetta} for details) there exists $\varepsilon>0$ such that the following holds: for any limit space at infinity $(X,\mathrm{d}_X)$ of $M$ we have
    \begin{equation}\label{eqn:AVRinequality}
        \mathrm{AVR}(X)\geq \mathrm{AVR}(M)+\varepsilon.
    \end{equation}
    Now, observe that by using the sharp isoperimetric inequality (\cref{sharpisop}) and Bishop--Gromov monotonicity formula \eqref{eqn:BishopGromov} one can prove that 
    \begin{equation}\label{eqn:AsymptoticsI}
    \lim_{v\to \infty}\frac{I_M(v)}{n(\omega_n\mathrm{AVR}(M))^{\frac{1}{n}}v^{\frac{n-1}{n}}}=1,    
    \end{equation}
    and thus there exists $v_0$ such that for every $v\geq v_0$ we have 
\begin{equation}\label{eqn:LimitI}
    I_M(v)\leq n(\omega_n(\mathrm{AVR}(M)+\varepsilon/2))^{1/n}v^{\frac{n-1}{n}}.
    \end{equation}

    We are now ready to prove that there exist isoperimetric sets for every volume $v\geq v_0$. If not, 
    by \cref{concentrationcompactness}, there is a non-collapsed Ricci limit space at infinity $(X,\mathrm{d}_X,x)$ and an isoperimetric set $E\subset X$, with $|E|_X=v$, such that
   \begin{equation}\label{eqn:UsualEquality}
   P_X(E)=I_X(v)=I_M(v).
   \end{equation}
    Putting together \eqref{eqn:LimitI}, \eqref{eqn:UsualEquality}, \cref{sharpisop}\footnote{Observe: the proof of \cref{sharpisop} works also for non-collapsed Ricci-limit spaces because \cref{concentrationcompactness} holds in that setting too.}, and \eqref{eqn:AVRinequality}, we thus have 
    \begin{equation}
    \begin{aligned}
    n(\omega_n(\mathrm{AVR}(M)+\varepsilon/2))^{1/n}v^{\frac{n-1}{n}} &\geq I_M(v) = P_X(E)\geq n(\omega_n\mathrm{AVR}(X))^{1/n}v^{\frac{n-1}{n}} \\ &\geq n(\omega_n(\mathrm{AVR}(M)+\varepsilon))^{1/n}v^{\frac{n-1}{n}},
    \end{aligned}
    \end{equation}
    which is a contradiction, as desired.
\end{proof}

\begin{theorem}[A--G \cite{AntonelliGlaudo}]
    For every $n\geq 3$ there exists a smooth complete Riemannian manifold $(M^n,g)$ with $\mathrm{Sect}\geq 0$, and $\mathrm{AVR}>0$, such that: there is \underline{no} isoperimetric set with volume $v< 1$.
\end{theorem}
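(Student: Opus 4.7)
The approach is to construct an explicit example and apply the concentration-compactness principle (\cref{concentrationcompactness}) to certify non-existence for all $v \in (0,1)$ simultaneously. The geometric intuition is the Taylor expansion for small balls at a point $p$,
\[
\frac{P(B_r(p))}{|B_r(p)|^{(n-1)/n}} = n\omega_n^{1/n}\left(1 - c_n \mathrm{Scal}(p)\, r^2 + O(r^4)\right),
\]
where $c_n>0$ depends only on dimension and $\mathrm{Scal}$ denotes scalar curvature. On a manifold with $\mathrm{Sect}\geq 0$, we have $\mathrm{Scal}\geq 0$, and higher scalar curvature gives small balls with strictly better isoperimetric ratios. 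My plan is to build $M$ whose scalar curvature supremum $S_\infty$ is approached only along a divergent sequence and is nowhere attained, so that minimizing sequences of any fixed volume $v < 1$ must escape to infinity.

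Concretely, I would construct $M^n$ as a smooth convex hypersurface in $\mathbb R^{n+1}$ asymptotic to a narrow convex cone. By the Gauss equation, the induced metric automatically satisfies $\mathrm{Sect}\geq 0$, and the narrow-cone asymptotics yield $\mathrm{AVR}(M)>0$. Into this convex background I would install a sequence of convex "bumps" (e.g., spherical-cap-shaped perturbations) at points $p_k\to\infty$, with shrinking radii $s_k \searrow s_\infty > 0$ chosen so that $\mathrm{Scal}(p_k) = n(n-1)/s_k^2 \nearrow S_\infty := n(n-1)/s_\infty^2$, but no bump realizes $s_\infty$. I would calibrate the bumps to have volume $|N_k|\geq 1$ (so each bump accommodates all competitors of volume $v < 1$) while being sparse enough along the direction to infinity that $\mathrm{AVR}(M)>0$ is preserved.

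To verify non-existence, fix $v \in (0,1)$ and consider the minimizing sequence $(E_k)$ given by small near-ball regions of volume $v$ inside the bump $N_k$. By the Taylor expansion applied to $(E_k)$, and by the strict monotonicity $\mathrm{Scal}(p_k) \nearrow S_\infty$, the perimeters $P(E_k)$ are strictly decreasing in $k$ (uniformly over $v < 1$), and converge to $I_M(v)$ without being attained. By \cref{concentrationcompactness}, if no isoperimetric set exists in $M$ then the sequence converges in the pmGH sense to an isoperimetric set in some Ricci-limit space $X$ at infinity -- namely, the "idealized bump" with $\mathrm{Scal}\equiv S_\infty$ -- with $I_X(v)=I_M(v)$. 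To rule out alternative (1) of \cref{concentrationcompactness}, I would argue that any candidate minimizer $E\subset M$ is necessarily supported in a bounded region and hence, up to translation, within some single bump $N_{k_0}$ where $\mathrm{Scal}<S_\infty$; comparing with the ideal bump at infinity gives a strictly smaller competitor, contradicting the minimality of $E$.

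The main obstacle is the delicate calibration of the bumps and their attachments to the convex background: simultaneously arranging (a) global $\mathrm{Sect}\geq 0$, particularly at the gluing necks between bumps and background (handled by working inside the category of smooth convex hypersurfaces); (b) $\mathrm{AVR}(M)>0$ (the total volume deficit introduced by the bumps must be summable); and (c) the monotone strict improvement of the isoperimetric ratio uniformly over all $v\in(0,1)$ -- not just in the leading Taylor term but also in the higher-order corrections. The restriction $n\geq 3$ reflects the sharp contrast with the two-dimensional case, where by \cref{ExistenceSurface} (Ritoré) isoperimetric sets always exist on a complete surface with $\mathrm{Sect}\geq 0$, so no such counterexample can exist in dimension $2$.
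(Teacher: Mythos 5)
Your proposal takes a genuinely different route from the paper, but it has real gaps that would need to be filled. The paper's construction does not involve ``bumps'' with scalar curvature increasing toward an unattained supremum. Instead, one builds an unbounded \emph{convex body} $C=(\Sigma\times\mathbb R)\cap\{x\cdot e_1\geq\varphi(t)\}$ (with $\Sigma\subset\mathbb R^n$ a proper convex cone and $\varphi$ convex decreasing to $0$), then works on $\partial C$ with the intrinsic metric and finally smooths. The key mechanism is a \emph{density/solid-angle comparison at the level of cones}: the asymptotic cone $\Sigma\times\mathbb R$ has density strictly lower than every local tangent cone of $C$, so by the Leonardi--Ritoré theory for unbounded convex bodies, small-volume isoperimetric regions must escape to infinity; the non-existence for a fixed range $v<1$ is then obtained by combining this escape phenomenon with an \emph{a priori diameter bound} on isoperimetric sets. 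This is not a perturbative scalar-curvature argument.

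Regarding the gaps in your outline: (1) the Taylor expansion $P(B_r)/|B_r|^{(n-1)/n}=n\omega_n^{1/n}(1-c_n\mathrm{Scal}(p)r^2+O(r^4))$ only controls small balls, and the error term is uniform only in regions of controlled geometry; it gives no information about competitors of volume comparable to $1$, so it cannot by itself certify non-existence for the \emph{fixed} threshold $v<1$. (2) The assertion that any candidate minimizer must live (up to translation) in a single bump $N_{k_0}$, and that comparison with the ``ideal bump at infinity'' yields a strictly smaller perimeter, is the heart of the matter and is not argued; in particular, one must rule out non-ball competitors and competitors straddling the neck regions, and this is exactly where the paper invokes the Leonardi--Ritoré density comparison plus the diameter estimate. (3) Geometrically, you cannot glue an outward ``spherical-cap-shaped bump'' onto a convex hypersurface while preserving convexity (the joint becomes non-convex); what you can do is locally increase the second fundamental form by adding a convex perturbation, but this is a different picture from the one you describe and requires care in the calibration you acknowledge as the ``main obstacle.'' (4) Your appeal to \cref{concentrationcompactness} needs an identification of \emph{all} pmGH limits at infinity and a proof that the isoperimetric profile of every such limit strictly beats every competitor in $M$; this is not automatic from $\mathrm{Scal}(p_k)\nearrow S_\infty$ alone.

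In short: your proposal correctly senses that the obstruction to existence should come from some monotone improvement of the isoperimetric ratio as one moves to infinity, but identifies the wrong curvature invariant and the wrong scale. The paper's argument is anchored in the \emph{conical} (large-scale) structure and the density of the asymptotic cone versus local tangent cones, not in the infinitesimal scalar-curvature expansion, and the passage from ``escape as $v\to 0$'' to ``non-existence for all $v<1$'' genuinely requires the a priori diameter estimate.
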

\begin{proof}
    We give a sketch and a picture. Let us describe the idea of the construction. The main idea is to first obtain the same non-existence result for the (relative) isoperimetric problem in unbounded convex bodies. Then, at the same time, with extra care, one shows that the  non-existence result is enjoyed by the boundary of such convex bodies with the intrinsic distance. Finally, one smoothens the boundary.

The construction goes as follows: pick a convex cone $\Sigma\subseteq\mathbb R^n$ contained in the half-space $\{x\in\mathbb R^n: x\cdot e_1\geq 0\}$, where $e_1$ is the first vector of the canonical basis of $\mathbb R^n$. For a convex decreasing function $\varphi:\mathbb R\to(0,\infty)$ with $\varphi(+\infty)=0$, we define $C$ as 
\begin{equation*}
    C:= (\Sigma\times\mathbb R)\cap \{(x, t)\in\mathbb R^n\times\mathbb R:\, x\cdot e_1\geq \varphi(t)\}.
\end{equation*}
A rather delicate choice of $\varphi$ (so that $\varphi'$ is pointwise small compared to $\varphi$) produces a convex set $C$ without isoperimetric sets with small volume. The idea is that one limit at infinity of $C$ is the cone $\Sigma\times\mathbb R$, which has density strictly lower than any tangent cone at a point of $C$. This already implies that if there were isoperimetric sets for small volumes $v$ in $C$, then these sets would escape to infinity as $v\to 0$, by \cite{LeonardiRitore}.

The crux of the argument is to use an a-priori estimate on the diameter of isoperimetric sets to upgrade this observation to the non-existence of isoperimetric sets for small volumes.

\begin{figure}[htbp]\label{fig}

 
\begin{tikzpicture}[scale=1.3,
declare function={
phi(\x)= (\x < 0) * (1/1.412-0.5*\x)   +
         (\x >= 0) * (1/(\x+1.412))
;
}]
 
\begin{axis} [
    axis lines = center, 
    axis on top, 
    axis line style = {very thin, black!20},
    view = {20}{20},
    unit vector ratio = 1 1 1,
    xtick = {0},
    ytick = {0},
    ztick = {0},
    xlabel = {{\tiny $t$}},
    zlabel = {{\tiny $x_1$}},
    x label style={anchor=north},
]
 
\addplot3 [
    domain=-0.7:5,
    domain y = -2:2,
    samples = 20,
    samples y = 100,
    surf,
    shader = interp,
    opacity = 0.4,
    colormap/gray] {max(phi(x), abs(y))};
    
\addplot3+[
    black!60,
    no markers,
    samples=51, 
    samples y=0,
    domain=-0.7:5,
    variable=\x]
    ({\x},{phi(\x)},{phi(\x)});

\addplot3+[
    black!60,
    no markers,
    samples=51, 
    samples y=0,
    domain=-0.7:5,
    variable=\x]
    ({\x},{-phi(\x)},{0.01+phi(\x)});

\end{axis}
 
\end{tikzpicture}
 
    \caption{Representation of the boundary $\partial C$.}
\end{figure}
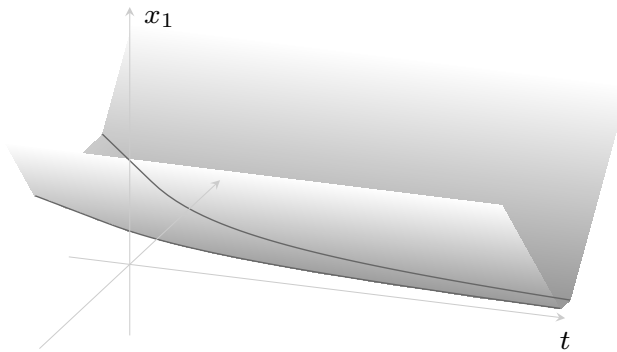
\end{proof}

We finish with few more open questions.

\begin{question}
    Understand whether there exists a smooth complete Riemannian manifold $(M^n,g)$ with $\mathrm{Ric}\geq 0$, $\mathrm{AVR}>0$, and \underline{no} isoperimetric sets.
\end{question}

\begin{question}
    Understand whether the following is true. Let $(M^n,g)$ be a smooth complete Riemannian manifold with $\mathrm{Sect}\geq 0$ {and $\inf_{x\in M}|B_1(x)|>0$}. Then isoperimetric sets exist for every sufficiently large volume.
\end{question}

\begin{question}\label{questionconvexity}
    Does there exist a non-compact smooth complete Riemannian manifold $(M^3,g)$ with $\mathrm{Sect}\geq 0$, { $\inf_{x\in M}|B_1(x)|>0$}, and with an isoperimetric set which is \underline{not} convex?
\end{question}
\newpage

\section{Exercises}
\begin{exercise}\label{ExNonExistence}
    Construct a Riemannian manifold $M$ such that:
    \begin{itemize}
        \item There are $v_1,v_2\in (0,|M|)$ such that: there exists an isoperimetric set with volume $v_1$ \textbf{and} there is \underline{no} isoperimetric set with volume $v_2$.
        \item For every volume $v\in (0,|M|)$ there is no isoperimetric set with volume $v$.
    \end{itemize}
\end{exercise}
\begin{exercise}[Maz'ya, Federer--Fleming -- Sobolev inequality]
Show that the isoperimetric inequality in \eqref{eqn:Isoperimetric} is equivalent to the $(1,n/(n-1))$-Sobolev inequality: 
\[
\int_{\mathbb R^n}|\nabla f| \geq n\omega_n^{1/n}\left(\int_{\mathbb R^n}|f|^{n/(n-1)}\right)^{(n-1)/n}, \qquad \forall f\in C^\infty_c(\mathbb R^n).
\]
\emph{Hint}. Use \cref{Approx}, \cref{Coarea} in \cref{subsectPropAndComm}, and (the properly rescaled version of) \eqref{eqn:Isoperimetric}.
\end{exercise}
\begin{exercise}\label{BMinequality}
    Prove the \emph{Brunn--Minkowski inequality}. Let $A,B$ be two compact sets in $\mathbb R^n$. Then 
    \[
    |A+B|^{1/n} \geq |A|^{1/n}+|B|^{1/n}.
    \]

    \emph{Hint}. First, prove it for pluri-rectangles $A=\prod_{i=1}^n I_i$, $B=\prod_{i=1}^n J_i$, where $\{I_i,J_i\}$ are compact intervals in $\mathbb R$. Then, prove it for $A,B$ being disjoint unions of pluri-rectangles. Finally, deduce the general version by approximation. This proof can be found in Federer's book \cite{Federer}.
\end{exercise}

\begin{exercise}
    Use the Brunn--Minkowski inequality to prove the following. If $K\subset\mathbb R^n$ is a convex set, and $K_t:=\{x\in \mathbb R^n:\mathrm{d}(x,K)\leq t\}$, for $t\geq 0$, then 
    \[
    [0,\infty)\ni t\mapsto |K_t|^{1/n}, \qquad \text{is concave}.
    \]
    Compare this with (the last part of) \cref{MeanCurvatureMonotonicity}, and \cref{questionconvexity}.
\end{exercise}

\begin{exercise}\label{ExRegularity}
  Following the outline in \cref{rem:RegularityIM}, show that under the hypotheses of \cref{thm:ConcavitySharp} the isoperimetric profile $I_M$ is locally semi-concave.  
\end{exercise}

\begin{exercise}
    Write down explicitly all the computations outlined in \cref{rem:Equality}.
\end{exercise}

\begin{exercise}\label{ExODEDetails}
  Write down explicitly all the details about the ODE comparison argument in \cref{LGBG} (The interested reader can find the complete argument in the appendix of \cite{PozzettaSurvey}).
\end{exercise}

\begin{exercise}\label{RicLocDoub}
    Prove that a Riemannian manifold $(M^n,g)$ with $\mathrm{Ric}\geq k\cdot g$, for some $k\in\mathbb R$, is locally doubling. Moreover, in such a case, given $R>0$, the constant $C(R)$ in \eqref{eqn:LocallyDoubling} only depends on $R,k,n$.

    \emph{Hint}. Use \eqref{eqn:BishopGromov}.
\end{exercise}

\begin{exercise}\label{exTrivialI}
    Let $(M^n,g)$ be a smooth complete non-compact Riemannian manifold with $\mathrm{Ric}\geq 0$. Then 
    \[
    I_M\equiv 0 \Leftrightarrow \inf_{x\in M}|B_1(x)|=0.
    \]

    \emph{Hint}. Use Bishop--Gromov monotonicity to produce balls with large volume and small perimeters.
\end{exercise}

\begin{exercise}\label{ExComputationsAG}
    Work out the detailed computations in the last nine lines of the proof of \cref{MeanCurvatureMonotonicity}.
\end{exercise}

\begin{exercise}\label{IXIM}
    Let $(M,g)$ be a smooth complete non-compact Riemannian manifold without boundary with $\mathrm{Ric}\geq k\cdot g$, and $\inf_{x\in M}|B_1(x)|>0$. Let $x_i$ be a sequence of diverging points on $M$ such that 
    \[
    (M,\mathrm{d},x_i)\longrightarrow (X,\mathrm{d}_X,x)
    \]
    in the pGH sense. 
    \begin{itemize}
        \item Prove that $I_X\geq I_M$.
        \item Assume that $k=0$. Prove that $\mathrm{AVR}(X)\geq \mathrm{AVR}(M)$.
    \end{itemize}
\end{exercise}

\begin{exercise}\label{ExCompleteAP}
    Add as many details as possible to the proof sketched above in \cref{ExistenceSurface}.
\end{exercise}

\begin{exercise}\label{DetailsABFP}
     Add as many details as possible to the proof sketched above in \cref{ABFP}. In particular prove \eqref{eqn:AVRinequality}, and \eqref{eqn:AsymptoticsI}.
\end{exercise}
\medskip

\textbf{Acknowledgments}. The material is based on my collaborations and papers with Elia Bruè (B), Mattia Fogagnolo (F), Federico Glaudo (G), Stefano Nardulli (N), Enrico Pasqualetto (Pa), Marco Pozzetta (Po), Daniele Semola (S), Ivan Violo (V), Kai Xu (X). I thank them for the many fruitful conversations and collaborations about the isoperimetric problem in the last few years. I am also deeply grateful to the organizers and the people who attended the \emph{European Doctorate School of Differential Geometry}. I would like to thank Žan Bajuk, Marco Pozzetta, Daniele Semola, {and the anonymous reviewers} for having read a first draft of these Lecture Notes. I am also indebted to the organizers and the people who attended the summer school \emph{Optimal transport, heat flow and synthetic Ricci bounds} in honor of Luigi Ambrosio, 2024 Nemmers Prize recipient.  
\medskip

\textbf{Funding Information}.
I have been partially supported by the AMS-Simons Travel Grant, and the NSF DMS Grant No. 2550590.
\smallskip

\textbf{Author contribution}.
The author confirms the sole responsibility for the conception of the study, presented results and manuscript preparation.
\medskip

\textbf{Conflict of interest}. 
The author states no conflict of interest.
\medskip

\printbibliography

\end{document}